\documentclass[10pt,a4paper,oneside]{amsart}
\usepackage{amssymb}
\usepackage{hyperref}
\usepackage{multirow}
\usepackage{amscd}
\usepackage{tikz-cd}

\usepackage{xcolor}

\usepackage[a4paper, total={5.4in, 8.48in}]{geometry}

\theoremstyle{plain}
\newtheorem{theorem}{Theorem}[section]
\newtheorem{proposition}[theorem]{Proposition}
\newtheorem{lemma}[theorem]{Lemma}
\newtheorem{corollary}[theorem]{Corollary}
\newtheorem{conjecture}[theorem]{Conjecture}

\theoremstyle{definition}
\newtheorem{definition}[theorem]{Definition}

\theoremstyle{remark}
\newtheorem{remark}[theorem]{Remark}
\newtheorem{example}[theorem]{Example}

 \hypersetup{
	colorlinks=true,
	linkcolor=blue,
	citecolor=blue,
	urlcolor=blue,
} 

\newcommand{\del}[1]{\frac{\partial}{\partial #1}}
\newcommand{\indel}[1]{\partial/\partial #1}

\newcommand{\PP}{\mathbf{P}}
\newcommand{\ZZ}{\mathbf{Z}}
\newcommand{\CC}{\mathbf{C}}
\newcommand{\QQ}{\mathbf{Q}}

\newcommand{\HH}{\mathbf{H}}
\newcommand{\FF}{\mathbf{F}}

\newcommand{\ii}{\mathrm{i}}
\newcommand{\dd}{\mathrm{d}}
\newcommand{\ee}{\mathrm{e}}

\numberwithin{equation}{section}

\title[Integration, models and symmetries of Lins Neto's  foliations]{Explicit integration, birational models and symmetries of Lins Neto's exceptional families of foliations}

\author{Adolfo Guillot}
\address{Instituto de Matem\'{a}ticas, Universidad Nacional Aut\'{o}noma de M\'{e}xico,  \'Area de la Investigación Científica, Ciudad Universitaria, Mexico City 04510, Mexico.}
\email{adolfo.guillot@im.unam.mx}
	
\author{Lu\'{\i}s Gustavo Mendes}
\address{Universidade Federal do Rio Grande do Sul, UFRGS, Brazil}
\email{gustavo.mendes@ufrgs.br}

\author{Wodson Mendson}
\address{ Universidade Federal Fluminense, Instituto de Matemática e
Estatística,  Rua Alexandre Moura~8, São Domingos, 4210-200
 Niterói RJ,
Brazil}
\email{oliveirawodson@gmail.com}

\author{Liliana Puchuri}
\address{Pontificia Universidad Cat\'olica del Per\'u, Per\'u}
\email{lpuchuri@pucp.pe}

\subjclass{32M25, 14E07, 34M04, 14H50,  13A35}
\keywords{Holomorphic foliation, Poincar\'e problem, Integrability, Birational transformation}

\begin{document}

\begin{abstract} In 2002, Lins Neto introduced three remarkable one-parameter families of holomorphic foliations by curves on the complex projective plane, of degrees two, three, and four, whose properties established that the general form of the Poincar\'e Problem had no solution. These foliations are known to be birationally equivalent to certain quotients of linear foliations on abelian surfaces. We give explicit formulas for these birational equivalences, obtaining, in particular, parametrizations of the leaves of the foliations. For the families of degrees three and four, we determine explicit generators for the groups of birational transformations of the projective plane preserving them. For foliations in these families admitting a rational first integral, that is, for those whose parameter is an Eisenstein rational, we give a complete description of the nature and position of the singular points of a generic integral curve, and we present an algorithm that computes the rational first integral explicitly.  Lins Neto's foliations can also be defined over algebraically closed fields of positive characteristic, and, in this setting, we characterize those that are algebraically integrable.   Finally, we study the integrability of the reductions to fields of positive characteristic of some of the non-integrable foliations in the complex family.
\end{abstract}

\maketitle

\setcounter{tocdepth}{1} 
\tableofcontents 
 
\clearpage 
\section{Introduction} 
The problem of the algebraic integration of complex algebraic differential equations in the plane consists of deciding whether a complex polynomial form \(A\,\dd x+B\,\dd y\) in \(\CC^2\) is a rational multiple of the differential of a rational function, this is, if the foliation defined by its kernel has a rational first integral.  After the works of Darboux, Painlev\'e and Autonne on the subject, the question was studied by Poincar\'e near the end of the 19th century. A necessary condition for a foliation with simple singularities to be algebraically integrable is that there exists, around each singular point, a local first integral of the form \(u^s/v\), with~\(s\), the \emph{exponent}, being a rational number.  In \cite{poincare-palermo1}, under the assumption that, for a given foliation, all the negative exponents take the value \(-1\), Poincar\'e gave bounds for the degree of an eventual  first integral, and  expressed his faith on the possibility that the same principles could give results in less particular cases.  The problem of bounding the degree of a potential rational first integral of a foliation in the plane in terms of the exponents of the singular points became known as the \emph{Poincar\'e Problem}. It saw partial advances (including some by Poincar\'e himself \cite{poincare-palermo2}) until, in 2002,  Lins Neto gave three remarkable families of singular holomorphic foliations by curves of the complex projective plane \cite{linsneto}. In each one of these families, with finitely many exceptions, every foliation had non-degenerate singularities with local first integrals, whose exponents were independent of the foliation. Each family contained  foliations with first integrals of arbitrarily large degree, which showed that results like the ones established by Poincar\'e could not take place in full generality: in its most ambitious and general form, the Poincar\'e Problem, as stated above, had no solution. 

The interest in Lins Neto's foliations went beyond its role in the Poincar\'e problem: they gave early examples of foliations in the plane of Kodaira dimension zero \cite[Ex. IV.3.0]{mcquillan}; they are relevant examples of foliations in the plane admitting entire curves tangent to them \cite{brunella-entire}; those of degree two give exceptional fibers of the Baum-Bott map for foliations of degree two in \(\PP^2\) \cite[Cor.~3.3]{guillot-quadratic}, and are associated with an exceptional pair of commuting quadratic homogeneous vector fields~\cite{guillot-exemplesLN};  they are central to the notion of \emph{curvature} of pencils of foliations \cite{lins_neto-pencils}; they are related to the description of infinitely many \((-1)\)-curves in rational surfaces \cite{MP-negative}; \ldots

In this article, we study Lins Neto's families of foliations, and obtain explicit results about them concerning  their birational symmetries and the integration of some of their members.

Lins Neto's three families of foliations are formed by foliations on the complex projective plane \(\PP^2\) having, respectively,  degrees two, three, and four. The one of degree four, \(\mathcal{F}^4\), from which the other two may be constructed through rational quotients, is given by the foliations of degree four that are tangent to the lines of the \emph{dual Hesse configuration}, the one formed by the nine lines  
\begin{equation} \label{arr:hesse} (x^3-y^3)(y^3-z^3)(z^3-x^3)=0,\end{equation}
and their points of intersection. These foliations form a one-parameter family, parame\-trized by \(\alpha\in\PP^1\): the foliation \(\mathcal{F}_\alpha^4\)  on \(\PP^2\) is given  by the kernel of the form
\begin{equation}\label{eq:f4linsneto}
(y^3-z^3)(yz-\alpha x^2)\dd x+(z^3-x^3)(xz-\alpha y^2)\dd y+(x^3-y^3)(xy-\alpha z^2)\dd z.\end{equation} 
The configuration (\ref{arr:hesse}) contains twelve points. Each line of the configuration passes through four points, and there are three lines through each one of them. A generic foliation in \(\mathcal{F}^4\) has 21 singularities: twelve dicritical nodes at the points of the configuration, with local first integrals of the form \(u/v\), and nine saddles, with local first integrals of the form \(u^3v\), one on each line of the configuration (which are \emph{mobile}, as their position depends upon the foliation).

Lins Neto's exceptional foliations were very soon understood to be quotients of linear foliations on some abelian surfaces (\cite[Ex. IV.3.0]{mcquillan},  \cite{guillot-exemplesLN}; see also 
\cite[Ex.~8.4]{brunella-birational}, but take into account  Remark~\ref{rem:bru}).  For  \(\mathcal{F}^4\), this description goes as follows. Let \(\omega\) be a primitive cubic root of unity. Let \(\Lambda\subset \CC\) be a lattice invariant by multiplication by~\(\omega\), and let \(E=\CC/\Lambda\). On \(E\times E\), every \emph{linear} (or \emph{Kronecker})  foliation, one induced by a family of parallel lines on \(\CC^2\), is invariant under the diagonal action  of multiplication by~\(\omega\). \emph{The  quotient of \(E\times E\) under the diagonal action of \(\omega\) is  birationally equivalent to the projective plane \(\PP^2\), through a birational map that maps the images of linear  foliations to those in Lins Neto's family \(\mathcal{F}^4\).}  This explained in a conceptual way many of the phenomena described by Lins Neto, and allowed, for example, to establish straightforwardly that the foliation \(\mathcal{F}_\alpha^4\) of Eq.~(\ref{eq:f4linsneto}) has a rational first integral if and only if  \(\alpha\in\QQ(\omega)\) (the degrees of these first integrals were later established in \cite{puchuri}). Our first result gives the first explicit form of this birational model.

\begin{theorem} \label{thm:ln-desc} Let \(\Lambda\subset \CC\) be the period lattice of the Weierstrass elliptic function \(\wp\) such that \((\wp')^2=4\wp^3+1\) (\(\Lambda\) has the order-three symmetry \(\omega\Lambda=\Lambda\)). Let \(E=\CC/\Lambda\). The map \(\Phi\colon E\times E\dashrightarrow\PP^2\) given by
 \begin{multline*}
\Phi (u,v)  = (  4   \wp^2(u)   \wp^3(v)+6   \wp^2(v)   \wp'(u)+4   \wp^2(u)+6   \wp^2(v) :\\ :
  2\wp(u)[2   \wp(u)   \wp^3(v)-(1+2\omega)   \wp(u)   \wp'(v)-  \wp(u)+(1+2  \omega )   \wp(v)   \wp'(v)-3   \wp(v)]   : \\ :  \wp(u) [4   \wp(u)   \wp^3(v)+2(1+2  \omega )   \wp(u)   \wp'(v)-2   \wp(u)+(1+2  \omega)   \wp(v)   \wp'(u)   \wp'(v)+\\   + 3   \wp'(u)   \wp(v)+(1+2  \omega)   \wp(v)   \wp'(v)+3   \wp(v)] )	
\end{multline*}	
realizes birationally the quotient of \(E\times E\) under the diagonal action of \(\omega\), while mapping the foliation induced by \(\indel{u}+\lambda \indel{v}\) to Lins Neto's foliation   \(\mathcal{F}_{\alpha}^{4}\) for 
\begin{equation}\label{lambda_to_alpha}
	\alpha=(1+2\omega)\lambda+1.
\end{equation}
In particular, when substituting \((u,v)=(t,\lambda t+c)\)  into the above expression, we obtain parametrizations of all the integral curves of \(\mathcal{F}_{\alpha}^{4}\).
\end{theorem}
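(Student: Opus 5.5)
We would reduce everything to an algebraic statement about function fields and a foliation pullback computation, dispensing with all transcendental considerations except the identities $\wp(\omega u)=\omega\wp(u)$ and $\wp'(\omega u)=\wp'(u)$. These follow from $\omega\Lambda=\Lambda$ and the vanishing of $g_2$ (here $g_2=0$, $g_3=-1$).

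\emph{Invariance.} Write $p=\wp(u)$, $p'=\wp'(u)$, $q=\wp(v)$, $q'=\wp'(v)$. The field of meromorphic functions of $E\times E$ is $\CC(p,p',q,q')$, subject to $p'^2=4p^3+1$ and $q'^2=4q^3+1$. The diagonal action sends $(p,p',q,q')$ to $(\omega p,p',\omega q,q')$. We check that each of the three homogeneous coordinates of $\Phi$ is an eigenfunction of this action with a common eigenvalue, so that the ratios are invariant. For instance, in the first coordinate $p^2q^3$ picks up $\omega^{2}\cdot\omega^3=\omega^2$, while $q^2p'$, $p^2$ and $q^2$ each pick up $\omega^2$. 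In the second coordinate, $p$ times the bracket picks up $\omega\cdot\omega=\omega^2$, and similarly for the third. Hence $\Phi$ factors through $(E\times E)/\langle\omega\rangle$.

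\emph{Birationality.} The invariant field is generated by $p^3$, $q^3$, $pq^2$, $p^2q$, $p'$, $q'$ (together with products such as $p'q$ and $q'p$ times suitable powers), and it has degree $3$ under $\CC(p,p',q,q')$. We show that the field generated by the ratios $X/Z$ and $Y/Z$ of the coordinates of $\Phi$ is the whole invariant field. The most reliable way is to exhibit inverse formulas: solve for $p'$, $q'$ and $q/p$ (an invariant generator up to the degree count) as rational functions of $x/z$ and $y/z$. Equivalently, we show that a generic fibre of $\Phi$ is a single $\langle\omega\rangle$-orbit. One way to do this is to compute the degree of $\Phi$ by intersection theory: a generic line pulls back to a curve whose self-intersection on $E\times E$ is $3$. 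I would do this by Gröbner/resultant elimination in the variables $p,p',q,q'$ modulo the two cubic relations. This is computational but finite.

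\emph{Foliation.} The vector field $\partial/\partial u+\lambda\,\partial/\partial v$ acts as a derivation $D$ with $Dp=p'$, $Dp'=6p^2$, $Dq=\lambda q'$ and $Dq'=6\lambda q^2$. A parametrized curve $t\mapsto(X:Y:Z)$ is tangent to the kernel of the form $\eta_\alpha$ in (\ref{eq:f4linsneto}) exactly when $\eta_\alpha(X,Y,Z)(DX,DY,DZ)=0$ in the function field, where $\eta_\alpha=A\,\dd x+B\,\dd y+C\,\dd z$. This expression is affine in $\alpha$ and in $\lambda$. So we split it into the coefficients of $1$, $\lambda$, $\alpha$ and $\alpha\lambda$, reduce each modulo the cubic relations to normal form (degree at most $1$ in $p'$ and in $q'$), and verify that the combination vanishes identically exactly when $\alpha=(1+2\omega)\lambda+1$. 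Because both the tangency condition and the relation are linear, it suffices to check two values of $\lambda$, say $\lambda=0$ and $\lambda=1$, and then extend by linearity in $(\alpha,\lambda)$. Finally, the lines $v=\lambda u+c$ are the leaves upstairs, so substituting $(u,v)=(t,\lambda t+c)$ gives the parametrizations.

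\emph{Main obstacle.} The main obstacle is the birationality step, specifically producing a clean inverse or a degree computation without heavy machinery. If explicit inversion is unwieldy, I would fall back on two checks. First, $\Phi$ is dominant, because the Jacobian at a sample point is nonzero. Second, $\deg\Phi=3$, which we establish by counting preimages of one generic point numerically/symbolically. Since the map factors through the degree-$3$ quotient, together these force birationality.
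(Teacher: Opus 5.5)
Your plan is sound. It amounts to the ``direct calculation'' that the paper mentions as an alternative, but the paper's own proof runs the other way: it \emph{derives} $\Phi$ instead of verifying it.

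In the paper, $\mathcal F^4_\alpha$ is induced by $Z_0+\lambda Z_1=\frac13(X_4+3\alpha Y_4)$, where $Z_0,Z_1$ are commuting quartic homogeneous vector fields on $\CC^3$ with common first integral $H$. The sextics $p_j$ satisfy $Z_{1-j}p_j=0$, and $q_j=Z_jp_j$ satisfies $q_j^2=4p_j^3+H^2$. Hence $F=(p_0,q_0,p_1,q_1)$ on $\{H=1\}/\langle\eta^3\rangle$ intertwines $Z_j$ with $D_j$ and $\overline{\eta}$ with $\rho^2$ by construction, so invariance and the foliation statement come for free. Birationality is Lemma~\ref{lemma:ratfun}, proved by a pencil argument checked at one point. $\Phi$ is then just the inverse: Eq.~(\ref{bir_inverse_of_F}) multiplied by $\wp(u)\wp(v)$.

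Concretely, the inverse you were looking for is
\[
\frac{\wp(u)}{\wp(v)}=-\frac{(x-z)(y-z)}{x^2+xy+y^2},\qquad
\wp'(u)=\frac{x^3+y^3-2z^3}{x^3-y^3},\qquad
\wp'(v)=-(1+2\omega)\frac{x^2y+x^2z+xy^2+xz^2+y^2z+yz^2}{(x-y)(y-z)(z-x)}.
\]
You correctly observed that $\wp(u)/\wp(v)$, $\wp'(u)$, $\wp'(v)$ generate the invariant field. So substituting $\Phi$ into these three functions and recovering $p/q$, $p'$, $q'$ proves birationality by a finite substitution, with no Gr\"obner elimination or degree count. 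Your route buys independence from the construction. The paper's route explains where the formula comes from and makes your ``main obstacle'' easy.

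Two of your shortcuts are wrong as stated and need fixing.

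\textbf{The self-intersection count.} $\Phi$ is not a morphism on $E\times E$. It is undefined at the nine fixed points of $\rho^2$, which are blown up onto the nine lines of (\ref{arr:hesse}); near $(0,0)$, for instance, $\Phi\approx(u:u:u+(1+2\omega)v)$. A generic member of $\Phi^*|\mathcal O(1)|$ passes simply through each of these points. So the closure of the preimage of a generic line has self-intersection $3+9=12$ on $E\times E$; the value $3$ appears only after blowing up those points. Similarly, a preimage count at a single point must be exact, not numerical. It must also be done at a point off the nine lines and off the twelve points of $\mathcal P$, which are the images of contracted elliptic curves.

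\textbf{The two-value check in $\lambda$.} The tangency expression $E_{00}+\lambda E_{01}+\alpha E_{10}+\alpha\lambda E_{11}$ is bilinear, not linear. Along $\alpha=(1+2\omega)\lambda+1$ it is quadratic in $\lambda$, with leading coefficient $(1+2\omega)E_{11}$, so checking $\lambda=0,1$ is not enough. Add $\lambda=\infty$, that is, check $E_{11}=0$. This says that $\partial/\partial v$ is sent to $\mathcal F^4_\infty$, equivalently that $(X^3+Y^3-2Z^3)/(X^3-Y^3)=\wp'(u)$ depends on $u$ only. Alternatively, carry out the full four-coefficient computation you describe first, which is correct.
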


The parametrizations of the integral curves given by this result may pass through the singular points of the foliation, but, after lifting them to the common resolution of the singularities of the foliations, they realize their universal coverings (with the exception of the finitely many rational leaves of~\(\mathcal{F}^4_\alpha\)).

The explicitness of this result will allow us to investigate the \emph{birational symmetries of the family}~\(\mathcal{F}^4\), this is, the birational transformations of \(\PP^2\) such  that, for every \(\alpha\in\PP^1\), map \(\mathcal{F}^4_\alpha\) to another foliation in the same family. For \(E=\CC/\ZZ[\omega]\),
let \(\rho\colon E\times E\to E\times E\),
\[	\rho(u,v)=(-\omega u,-\omega v),\]
so that the diagonal action of \(\omega\) on \(E\times E\) is the action of \(\rho^2\). As we shall see in Proposition~\ref{id:groups}, the affine transformations of \(\CC^2\) that act on the quotient \((E\times E)/\langle\rho^2\rangle\) form a semidirect product \(\mathrm{GL}(2,\ZZ[\omega])\ltimes (\ZZ/3\ZZ)^2\), where the factor \((\ZZ/3\ZZ)^2\) is generated by the translations by \((0,\tau_0)\) and \((\tau_0,0)\), for the third-of-a-period \(\tau_0=\frac{1}{3}(1+2\omega)\). This group acts by birational transformations on every birational model of \((E\times E)/\langle\rho^2\rangle\)  while preserving the family of foliations induced by the linear ones. Proposition~\ref{prop:aff-bir} will establish that these are, in fact, the only birational symmetries of~\(\mathcal{F}^4\). Through  Theorem~\ref{thm:ln-desc}, and
in essentially the same way that produces explicit formulas for Latt\`es maps of \(\PP^1\), 
we will give explicit generators for the group of these birational transformations:

\begin{theorem}\label{thm:bir-f4} The group of birational transformations of \(\PP^2\) preserving the family of foliations \(\mathcal{F}^4\) is, under the birational identification of \(\PP^2\) with \((E\times E)/\langle \rho^2\rangle\) given by Theorem~\ref{thm:ln-desc}, induced by the  action of \(\mathrm{GL}(2,\ZZ[\omega])\ltimes (\ZZ/3\ZZ)^2\) on \(E\times E\). It  is generated by the birational transformations 
\[B_4(x:y:z)= (\omega^2 x+\omega y+z:\omega x+\omega^2 y+z:x+y+z), \]
\[S_4(x:y:z) =  ( y^2-\omega xz:x^2-\omega yz:z^2-\omega xy ),\] 
\begin{multline*}	U_4(x:y:z)  =  
			(  (x y+x   z+\omega  y   z)  (x+y+\omega^2 z)  (x+\omega^2  y+z): \\  
		 :( \omega  x   z+x  y+y   z)  ( \omega^2x+ y+   z)  (x+y+\omega^2   z): \\    
			:(  x   z+  yz+\omega x  y)  (x+\omega ^2  y+z)  ( \omega^2x+ y+   z)),\end{multline*} 
\[L_4(x:y:z)=  ( z:x:y).\]
The first three generate the action of \(\mathrm{GL}(2,\ZZ[\omega])\); the action of the normal factor \((\ZZ/3\ZZ)^2\) is generated by \(L_4\) and by its conjugate \(L_4'=B_4L_4B_4^{-1}\),	\begin{equation}\label{bir:e_conj}L_4'(x:y:z)=(x:\omega^2y:\omega z),\end{equation}
and consists entirely of linear transformations.
\end{theorem}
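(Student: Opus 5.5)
The first assertion, that the birational symmetry group of \(\mathcal{F}^4\) is exactly the one induced by \(\mathrm{GL}(2,\ZZ[\omega])\ltimes(\ZZ/3\ZZ)^2\), will be taken from Proposition~\ref{id:groups} and Proposition~\ref{prop:aff-bir}. What remains is to show that the four explicit maps \(B_4,S_4,U_4,L_4\) are the transforms, under \(\Phi\) of Theorem~\ref{thm:ln-desc}, of a generating set of that affine group. The approach is the Latt\`es one: pick simple affine generators, write each affine map \(A\) together with \(\Phi\circ A\) using addition formulas for \(\wp\), and identify the unique birational map \(T\) with \(T\circ\Phi=\Phi\circ A\).

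For generators I take the following. \(\mathrm{GL}(2,\ZZ[\omega])\) is generated by elementary matrices and diagonal units, since \(\ZZ[\omega]\) is Euclidean. So it suffices to use:
\begin{itemize}
\item the swap \((u,v)\mapsto(v,u)\);
\item a diagonal unit such as \((u,v)\mapsto(-\omega u,v)\), together with the units of \(\ZZ[\omega]\);
\item a shear \((u,v)\mapsto(u,u+v)\).
\end{itemize}
Conjugating the shear by units gives the shears by \(\omega\) and \(\omega^2\), and these together generate the elementary subgroup. I will cut down to three generators by checking, at the level of matrices, which products reproduce the others.

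Each matrix is matched to a map through its action on \(\lambda\), via (\ref{lambda_to_alpha}). A matrix sends the slope \(\lambda\) by a Möbius transformation, and the induced map \(T\) must send \(\mathcal{F}^4_\alpha\) to \(\mathcal{F}^4_{\alpha'}\) for the corresponding \(\alpha'\). For each candidate (\(B_4\), \(S_4\), \(U_4\)) I compute, directly from the form (\ref{eq:f4linsneto}), how it acts on \(\alpha\), and match it to a matrix. A linear map like \(B_4\) should correspond to a finite-order element, and the quadratic \(S_4\) and cubic \(U_4\) to shears or their products.

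Matching on \(\alpha\) only determines the affine map up to the kernel of the action on foliations, which contains the translations and \(\rho\). To remove this ambiguity I verify \(T\circ\Phi=\Phi\circ A\) exactly. I do this by substituting the identities \(\wp(-\omega u)=\omega\wp(u)\) and \(\wp'(-\omega u)=-\wp'(u)\) and the addition theorem into \(\Phi\), reducing modulo \((\wp')^2=4\wp^3+1\), and comparing with the composition. Equivalently, I compare expansions along a generic parametrized leaf \((t,\lambda t+c)\).

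For the translations, the third-of-a-period \(\tau_0\) satisfies \(\wp(\tau_0)=0\) and \(\wp'(\tau_0)=\pm1\). Using these values, translation by \((0,\tau_0)\) and by \((\tau_0,0)\) can be computed in closed form, and I expect them to give \(L_4\) and \(L_4'\), permuting or rescaling the three lines of each pencil in the Hesse configuration. To confirm \(L_4'=B_4L_4B_4^{-1}\) I compute the right-hand side directly. It then follows that the normal subgroup consists of linear maps: it is generated by two commuting linear maps of order three, giving the nine projectivities \((x:y:z)\mapsto(y:z:x)\), \((x:\omega y:\omega^2 z)\), etc.

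The main obstacle is the exact identification of \(S_4\) and \(U_4\) with specific matrices. The composition checks with the addition theorem involve heavy rational expressions in \(\wp(u),\wp(v),\wp'(u),\wp'(v)\). They are best handled by computer algebra, or by matching Laurent expansions at a base point together with the degree of the map. Once the matches are established, it remains to verify that the three matrices obtained generate \(\mathrm{GL}(2,\ZZ[\omega])\), which is a finite computation with elementary and unit matrices.
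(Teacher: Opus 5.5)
Your plan is correct and is essentially the paper's own Latt\`es-type argument: the first assertion comes from Propositions~\ref{prop:aff-bir} and~\ref{id:groups}, each explicit map is identified with an affine generator through the symmetries and addition formulas of \(\wp\), and generation comes from the Euclidean algorithm in \(\ZZ[\omega]\) (Proposition~\ref{gl2gen}) together with the two order-three translations. The only difference is direction: the paper starts from the matrices \(B_0,S_0,U_0\) and the translation by a zero of \(\wp\), and checks only the pullbacks of the three generators \(p_0/p_1=\wp(u)/\wp(v)\), \(q_0/H=\wp'(u)\), \(q_1/H=\wp'(v)\) of Lemma~\ref{lemma:ratfun}; this is lighter than testing \(T\circ\Phi=\Phi\circ A\) with the full formula for \(\Phi\), makes your \(\alpha\)-matching step unnecessary, and shows that \(S_4\) corresponds to the diagonal unit \(S_0\), not to a shear as you guessed.
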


An alternative generating set of birational transformations for the action, involving only one non-linear transformation, will be given in Proposition~\ref{prop:altgen}. An analogous result for the  family of degree three will be the object of Theorem~\ref{bir:3fam}.

Lins Neto's foliations are central examples for the problem of the integration of algebraic foliations of the plane, and a natural problem is to integrate them, as explicitly as possible, in the many senses of the term. Theorem~\ref{thm:ln-desc} already gives parametrizations for the leaves of the foliations, although, as explicit as it may be, in the cases where there is a rational first integral, it is difficult to determine from it the position and nature of the singular points of a  generic integral curve.  Recently, in \cite{mendes-puchuri-effective}, for the cases where \(\alpha\in\ZZ[\omega]\), an algorithm that determines the  first integrals and the positions and multiplicities of the singularities of the generic integral curve of \(\mathcal{F}_\alpha^4\) was given. Our next result addresses this problem in general.

The twelve points \(\mathcal{P}\) of the dual Hesse configuration (\ref{arr:hesse})  may be grouped by triples: following the notation in \cite{mendes-puchuri-effective}, define 
\begin{align*}
	\mathcal{P}_{1} & : (1:1:1),   (1:\omega:\omega^2), (1:\omega^2:\omega),    \\ 
	\mathcal{P}_{\omega} & : (1:\omega:1),  (1:1:\omega),  (\omega:1:1),   \\
	\mathcal{P}_{\omega^2} &: (1:1:\omega^2), (\omega^2:1: 1), (1:\omega^2:1) ,   \\ 
	\mathcal{P}_\infty  & : (0:0:1),  (0:1:0), (1:0:0).  
\end{align*} 
Each line of the configuration contains one point of each of these blocks, and the nine lines of the configuration pass  by triples trough the three points of each block.

\begin{theorem}\label{thm:nodes} Let \(\alpha\in\QQ(\omega)\), \(\alpha\notin\{1,\omega,\omega^2\}\). For the \emph{slope} \(\lambda\) of 	\(\mathcal{F}_{\alpha}^{4}\), 
\begin{equation}\label{eq:slope}
	\lambda={\textstyle \frac{1}{3}}(1+2\omega)(1-\alpha),\end{equation}
if \(\lambda=p/q\), with \(p\) and \(q\) relatively prime elements of \(\ZZ[\omega]\), for the generic integral curve of  \(\mathcal{F}_{\alpha}^{4}\),   its singularities are  ordinary multiple  points of multiplicity
	\begin{itemize}
		\item \(N(q)\), at each one of the points in \(\mathcal{P}_{\infty}\),
		\item \(N(p)\), at each one of the points in  \(\mathcal{P}_1\),
		\item \(N(p+\omega^2q)\), at each one of the points in \(\mathcal{P}_{\omega}\), and
		\item \(N(p-\omega q)\), at each one of the points in \(\mathcal{P}_{\omega^2}\),
	\end{itemize}
where, for \(m+n\omega\in \ZZ[\omega]\), \(N(m+n\omega)\) denotes its norm, \(n^2-nm+m^2\).
\end{theorem}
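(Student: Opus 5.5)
The plan is to read the integral curves of \(\mathcal{F}^4_\alpha\) through the model of Theorem~\ref{thm:ln-desc}. First, the two slopes agree: inverting (\ref{lambda_to_alpha}) gives \(\lambda=(\alpha-1)/(1+2\omega)=\frac13(1+2\omega)(1-\alpha)\), using \((1+2\omega)^2=-3\). This is exactly (\ref{eq:slope}), and \(\lambda\in\QQ(\omega)\) iff \(\alpha\in\QQ(\omega)\). Write \(\lambda=p/q\) in lowest terms. The leaf of \(\indel{u}+\lambda\indel{v}\) through \((0,c)\) is then the closed curve
\[L_c=\{(qt,\,pt+c)\}\subset E\times E.\]
Since \(\gcd(p,q)=1\), this is a smooth elliptic curve, a translate of an elliptic subgroup. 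The generic integral curve of \(\mathcal{F}^4_\alpha\) is the image of a generic \(L_c\).

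Next I would make the geometry of the birational map explicit. Let \(X\) be the blow-up of \(\PP^2\) at the twelve points of \(\mathcal{P}\).
\begin{itemize}
\item The strict transforms of the nine lines are \((-3)\)-curves, since \(1-4=-3\). They are the resolutions of the nine \(\frac13(1,1)\) quotient singularities at the images of the nine fixed points of \(\rho^2\), which are the points of \((\{0,\pm\tau_0\})^2\).
\item \(X\) is then the minimal resolution of \((E\times E)/\langle\rho^2\rangle\).
\item The twelve exceptional \((-1)\)-curves \(F\) are images of elliptic curves \(T\subset E\times E\) that are invariant under \(\rho^2\) and pass through three fixed points. These are translates, by elements of \((\ZZ/3\ZZ)^2\), of the subgroups in four directions \((a,b)\), with three translates per direction.
\end{itemize}
Using the explicit formula of Theorem~\ref{thm:ln-desc}, I would determine which direction goes to which block. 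I would do this by checking where \(\Phi\) sends \(\{u=\text{const}\}\), \(\{v=\text{const}\}\), and the curves \(v=-\omega^2 u+\text{const}\) and \(v=\omega u+\text{const}\). The expected assignment is \((0,1)\mapsto\mathcal{P}_\infty\), \((1,0)\mapsto\mathcal{P}_1\), and the remaining two directions to \(\mathcal{P}_\omega\) and \(\mathcal{P}_{\omega^2}\). \textbf{This identification is the main obstacle.} I would check it by evaluating \(\Phi\) along these curves, for instance \(\Phi(u_0,v)\) with \(u_0\) fixed. It can also be cross-checked against the case \(\alpha\in\ZZ[\omega]\) treated in \cite{mendes-puchuri-effective}.

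Then I would compute the multiplicities. Let \(\pi\colon\widetilde{E\times E}\to X\) be the degree-three quotient map. Then \(\pi^*F=T\), and \(\pi\) restricted to \(L_c\) is injective for generic \(c\). Indeed, \(\rho^2(L_c)=L_{\omega c}\) is a parallel translate of \(L_c\), and it is disjoint from \(L_c\) when \(c\) is generic. Moreover a generic \(L_c\) avoids the fixed points. Hence
\[\pi_*L_c\cdot F=L_c\cdot T .\]
For two translated elliptic subgroups of \(E\times E\) with directions \((q,p)\) and \((a,b)\), the intersection number is \(N(\det)=|qb-pa|^2\). The four directions give \(N(q)\), \(N(p)\), \(N(p+\omega^2q)\) and \(N(p-\omega q)\), which are the four claimed values.

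Finally I would check that the singularities are ordinary. Distinct points of \(L_c\cap T\) give distinct points of \(F\), because \(\pi|_{L_c}\) is injective. Each intersection is transverse, since parallel-translate curves of different directions meet transversally. So each branch through the point of \(\PP^2\) is smooth and has its own tangent direction, and the multiple point is ordinary. Away from the twelve points, the image of \(L_c\) is smooth: \(\pi|_{L_c}\) is an injective immersion avoiding the fixed points, and \(X\to\PP^2\) is an isomorphism there.
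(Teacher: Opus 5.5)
Your proposal is correct in outline but is organized differently from the paper's proof. The paper treats only one block directly. By Table~\ref{tab:corrbis}, the first integral \(\phi_4=q_0/H\) of \(\mathcal{F}^4_\infty\) is \(\wp'(u)\), so the exceptional curves over \(\mathcal{P}_\infty\) are the triple components of its three singular fibers and come from the curves \(\{u_0\}\times E\), \(u_0\in\{0,\pm\tau_0\}\). The paper counts \(L_{p/q}\cap(\{0\}\times E)=\lambda\ZZ[\omega]/\ZZ[\omega]\cong\ZZ[\omega]/q\ZZ[\omega]\), which has \(N(q)\) elements. It then transports this count to \(\mathcal{P}_1\), \(\mathcal{P}_{\omega^2}\) and \(\mathcal{P}_\omega\) with the linear symmetries \(B_4\), \(T_4\), \(T_4^2\), which permute the degenerate foliations \(\mathcal{F}^4_\kappa\) and hence the blocks, recomputing the transformed slopes each time.

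You instead treat the four blocks uniformly through the intersection number \(N(qb-pa)\) of two translated elliptic subgroups. This is the \(\mathrm{GL}(2,\ZZ[\omega])\)-invariant form of the paper's count and makes the symmetries unnecessary. Your justification that the multiple points are ordinary (injectivity of the quotient map on a generic \(L_c\), transversality, distinct points on \(F\)) is also more explicit than the paper's. The price is that you need the direction attached to every block, whereas the paper needs it only for \(\mathcal{P}_\infty\), where it comes for free from the construction.

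That identification is the one step you leave unproved. The matching of \((1,-\omega^2)\), \((1,\omega)\) with \(\mathcal{P}_\omega\), \(\mathcal{P}_{\omega^2}\) is only implicit in the order of your lists. You can settle it without evaluating \(\Phi\), as follows.

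Suppose \(F\) lies over \(P\in\mathcal{P}_\kappa\) and comes from a curve \(T\) of direction \(d\). Then \(F\) is invariant by the member of the family with slope \(d\). Since three invariant lines pass through \(P\), a nondegenerate singularity at \(P\) must be radial, hence dicritical, and then the exceptional curve of one blow-up is not invariant. So \(P\) is a degenerate point of the foliation of slope \(d\), which forces that foliation to be \(\mathcal{F}^4_\kappa\). Hence \(d\) is the slope of \(\mathcal{F}^4_\kappa\), namely \(\infty\), \(0\), \(1+\omega=-\omega^2\), \(\omega\) for \(\kappa=\infty,1,\omega,\omega^2\), by (\ref{eq:slope}). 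This confirms your expected assignment, with \((1,-\omega^2)\mapsto\mathcal{P}_\omega\) and \((1,\omega)\mapsto\mathcal{P}_{\omega^2}\).

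Your assertion that the blow-up of \(\PP^2\) at the twelve points is the minimal resolution of \((E\times E)/\langle\rho^2\rangle\) also deserves a line; the paper uses it only implicitly. One justification: \(\Phi\) matches the fibrations \(\phi_4\) and \(\wp'(u)\), each with three blown-up fibers of type \(\mathrm{IV}\), and relatively minimal elliptic fibrations are unique in their birational class.
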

Through this result we recover, under a slightly different formulation, the main result of \cite{puchuri}:
\begin{corollary}\label{coro:ln_degree} Let \(\alpha\in\QQ(\omega)\), \(\alpha\notin\{1,\omega,\omega^2\}\). With the notations of Theorem~\ref{thm:nodes}, the degree of the first integral of \(\mathcal{F}_{\alpha}^{4}\) is
	\[N(p)+N(q)+N(p-\omega q)+N(p+\omega^2q).\]
\end{corollary}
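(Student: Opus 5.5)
The plan is to deduce the degree from Theorem~\ref{thm:nodes} by intersecting a generic integral curve with a line of the dual Hesse configuration (\ref{arr:hesse}). The rational first integral of \(\mathcal{F}_\alpha^4\) can be taken to be primitive: its generic fiber is irreducible and is the closure of a generic leaf. Its degree \(d\) is therefore the degree of a generic integral curve \(C\), and it suffices to compute \(\deg C\).

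Let \(\ell\) be one of the nine lines of the configuration. It is invariant by \(\mathcal{F}_\alpha^4\), so it is not a component of the generic fiber \(C\). By B\'ezout, \(d=\deg C=\ell\cdot C\) equals the sum of the local intersection multiplicities of \(\ell\) and \(C\) at the points of \(\ell\cap C\).

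Next we locate \(\ell\cap C\). Since \(\ell\) is a leaf (together with singular points), any point of \(C\cap\ell\) lies in the tangency locus between a leaf and an invariant curve, so it must be a singular point of \(\mathcal{F}_\alpha^4\) on \(\ell\). These are the four configuration points on \(\ell\) and the mobile saddle on \(\ell\). The saddle has local first integral \(u^3v\), whose only separatrices are the two axes; the other local leaves \(u^3v=c\), \(c\neq 0\), avoid the singular point. So a generic fiber does not pass through the saddle.

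At each configuration point the singularity is a dicritical node with local first integral \(u/v\). By Theorem~\ref{thm:nodes}, \(C\) has there an ordinary multiple point of multiplicity \(m\), whose \(m\) branches are smooth and belong to distinct local leaves \(u=cv\). The line \(\ell\) is itself one of these local leaves, and for generic \(C\) it is different from the \(m\) branches. Each branch is then transverse to \(\ell\), so the local intersection multiplicity equals \(m\).

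Since \(\ell\) contains exactly one point from each of \(\mathcal{P}_\infty,\mathcal{P}_1,\mathcal{P}_\omega,\mathcal{P}_{\omega^2}\), summing the four multiplicities gives \(N(q)+N(p)+N(p+\omega^2q)+N(p-\omega q)\), which is the claimed value of \(d\).

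The main point needing care is the transversality at the nodes. The relevant fact is that distinct leaves of a node \(u/v\) have distinct tangent directions, since each local leaf is the line \(u=cv\). This requires the dicritical nodes to be genuinely of this reduced form for all \(\alpha\neq 1,\omega,\omega^2\), which is the setting of Theorem~\ref{thm:nodes}. It also requires checking that the first integral is primitive, so that the degree of the generic fiber equals the degree of the first integral.
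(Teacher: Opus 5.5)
Your argument is correct, and it takes a different route from the paper's.

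\textbf{The paper's route.} The paper blows up the twelve points of \(\mathcal{P}\) and uses that the strict transform of a generic integral curve \(C\) then has self-intersection zero, which gives \(d^2=3\sum_\kappa m_\kappa^2\). It combines this with the genus formula for \(C\), which is elliptic and whose only singularities are the ordinary multiple points at \(\mathcal{P}\). Eliminating the quadratic terms yields \(d=\sum_\kappa m_\kappa\).

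\textbf{Your route.} You intersect \(C\) with a single invariant line \(\ell\) of the configuration and read off \(d\) directly from B\'ezout. This is a linear, purely local computation. It uses only the following:
\begin{itemize}
\item the multiplicities at the four points of \(\ell\), one from each block;
\item the radial structure of the nodes, so that distinct local leaves have distinct tangents and every branch of \(C\) is transverse to \(\ell\);
\item the non-degeneracy of the singularities, so that the only other singular point on \(\ell\) is the mobile saddle, which has just two separatrices.
\end{itemize}
You do not need the genus of \(C\). You also do not need that \(C\) has no singularities outside \(\mathcal{P}\), nor that the twelve blow-ups produce a fibration.

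\textbf{What the paper's route buys.} Its global argument is insensitive to the position and nature of the mobile saddles. As a by-product, its two relations force the arithmetic identity \((\sum_\kappa m_\kappa)^2=3\sum_\kappa m_\kappa^2\) among the four norms. For instance, in Example~\ref{exo:degree} one has \(15^2=3(16+9+49+1)\).

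\textbf{On primitivity.} The primitivity of the first integral that you flag is assumed equally in the paper, which identifies the degree of the first integral with the degree of a generic integral curve.
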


An analogue of these two results for Lins Neto's family \(\mathcal{F}^3\) of degree three will be given in Theorem~\ref{node:f3}.

Another natural problem about the integration of these foliations is to give explicit first integrals for \(\mathcal{F}_{\alpha}^{4}\) for \(\alpha\in\QQ(\omega)\). One approach for it  is to   determine the degree of the first integral, as was done in~\cite{puchuri}, for, once this is known, then, as mentioned by Poincar\'e, the problem of explicitly finding the first integral reduces to a purely algebraic one \cite{poincare-palermo1}.  Another approach consists in finding a birational transformation that, for a given \(\alpha\in\QQ(\omega)\),  maps the foliation \(\mathcal{F}_{\alpha}^{4}\)  to a reference integrable foliation, for then a first integral of the latter may be pulled-back by the birational transformation to obtain a first integral of the former. This was the approach recently followed in \cite{mendes-puchuri-effective}, where this was successfully carried out algorithmically for all \(\alpha\in\ZZ[\omega]\cup 1/\ZZ[\omega]\). We here continue in this order of ideas. The explicit nature of Theorem~\ref{thm:bir-f4} will allow us, in Section~\ref{sec:algo}, to give an algorithm that produces, for every \(\alpha\in\QQ(\omega)\), a rational first integral for \(\mathcal{F}_{\alpha}^{4}\), in the following way. As a consequence of Proposition~\ref{id:groups}, for \(\alpha\in\QQ(\omega)\), there exists a birational transformation of \(\PP^2\) that maps \(\mathcal{F}_\alpha^4\) into \(\mathcal{F}_\infty^4\) (Corollary~\ref{coro:trans_rational}). Furthermore, this transformation may be obtained algorithmically from~\(\alpha\). By pulling back through it a rational first integral of~\(\mathcal{F}_{\infty}^{4}\), we obtain a rational first integral for~\(\mathcal{F}_{\alpha}^{4}\).

Many of our results are, in essence, algebraic, and carry on to algebraically closed fields of positive characteristic.	 The dual Hesse configuration (\ref{arr:hesse}) makes sense for every field having a primitive cubic root of unity, and, on such a field, the most general foliation of degree four of the projective plane  that is tangent to this configuration is still the foliation \(\mathcal{F}_\alpha^4\)  given  by the kernel of the form (\ref{eq:f4linsneto}). As we will see in Theorem~\ref{sec:main}, the description of the foliations in \(\mathcal{F}^4\) of Theorem~\ref{thm:ln-desc} is still valid in positive characteristic. We  study the algebraic integrability  of the foliations in the family, which turns out to be governed by the congruence class modulo~\(3\) of the characteristic of the field:
	
\begin{theorem}\label{thm:charp}
Let \(k\) be an algebraically closed field of characteristic \(p>3\). For the foliation  \(\mathcal{F}^4_\alpha\)   on \(\PP^2_k\), defined, for \(\alpha\in k\cup\{\infty\}\), by  the kernel of the 	form (\ref{eq:f4linsneto}), we have that
\begin{itemize}
\item for \(p\equiv 2\pmod{3}\), \(\mathcal{F}^4_\alpha\) has a rational first integral for every \(\alpha\in k\cup\{\infty\}\).
\item for \(p\equiv 1\pmod{3}\), \(\mathcal{F}^4_\alpha\) has a rational first integral  if and only if \(\alpha\in\FF_p\cup\{\infty\}\).
\end{itemize}
\end{theorem}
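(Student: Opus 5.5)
The plan is to transport the question to the abelian surface. By the positive-characteristic version of Theorem~\ref{thm:ln-desc} (Theorem~\ref{sec:main}), \(\mathcal{F}^4_\alpha\) is birationally the quotient, under \(\rho^2\), of the translation-invariant foliation \(\mathcal{G}_\lambda\) on \(A=E\times E\) given by \(\indel{u}+\lambda\indel{v}\). Here \(E\) is the elliptic curve \(y^2=4x^3+1\) with \(j=0\), and \(\alpha=(1+2\omega)\lambda+1\). Rational first integrals are preserved under birational maps and under finite quotients: a first integral upstairs can be symmetrized, for instance by taking the elementary symmetric functions of its \(\rho^2\)-translates, at least one of which is nonconstant. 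So \(\mathcal{F}^4_\alpha\) is algebraically integrable if and only if \(\mathcal{G}_\lambda\) is. The value \(\alpha=\infty\) corresponds to \(\lambda=\infty\), the foliation by the fibres \(\{u=c\}\), which always has a first integral. For \(\alpha\in k\) the bijection \(\alpha\leftrightarrow\lambda\) is affine, with \(1+2\omega\neq0\) because \(p>3\), and it carries \(\FF_p\) to \(\FF_p\) exactly when \(\omega\in\FF_p\). This holds when \(p\equiv1\pmod 3\), which is the relevant case below.

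Next I need the integrability criterion for a linear foliation on an abelian surface in characteristic \(p\). A translation-invariant foliation, given by the line \(\ell=k\cdot(1,\lambda)\subset\mathrm{Lie}(A)\), is algebraically integrable if and only if there is an elliptic curve \(C\subset A\) through \(0\) with \(T_0C=\ell\). Then the leaves are translates of \(C\), and \(A\to A/C\) gives the first integral. Conversely, if there is a first integral, the closure of the leaf through \(0\) is a curve invariant under translation by its own points, hence a subgroup. Elliptic subcurves of \(E\times E\) correspond to primitive elements \((a,b)\in\mathrm{End}(E)^2\), via the image of \(x\mapsto(ax,bx)\) or kernels of \((x,y)\mapsto bx-ay\). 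Their tangent directions are \((da:db)\), where \(d\colon\mathrm{End}(E)\to k\) is the action on the invariant differential. In positive characteristic there is an additional phenomenon: a foliation that is \(p\)-closed has a first integral (Jacobson correspondence: \(p\)-closed foliations are exactly quotients by height-one group schemes, and the quotient \(A\to A/\ker\) exists). Translation-invariant vector fields \(\partial\) on \(A\) satisfy \(\partial^p=c\,\partial\)-type relations governed by the \(p\)-linear Hasse–Witt/Frobenius action on \(\mathrm{Lie}(A)\). Concretely, \(\ell\) is \(p\)-closed iff \(\ell\) is stable under the \(p\)-th power operation \(F^*\) on \(\mathrm{Lie}(A)=\mathrm{Lie}(E)^2\).

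The two cases then follow. For \(p\equiv2\pmod3\), \(E\) (with \(j=0\)) is supersingular, so the \(p\)-th power map on \(\mathrm{Lie}(E)\) is zero. Hence every invariant vector field \(\partial\) satisfies \(\partial^p=0\), every \(\ell\) is \(p\)-closed, and \(\mathcal{G}_\lambda\) has a first integral, giving the first bullet for every \(\alpha\). Alternatively, \(\mathrm{End}(E)\) is a maximal order in a quaternion algebra, and \(d\) composed with Frobenius reaches enough directions. For \(p\equiv1\pmod3\), \(E\) is ordinary with \(\mathrm{End}(E)=\ZZ[\omega]\). The \(p\)-th power map on \(\mathrm{Lie}(E)\) is a nonzero semilinear bijection. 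On \(\mathrm{Lie}(E)^2\) it acts by \((a,b)\mapsto(a^p,b^p)\) in suitable coordinates, so \(\ell\) is \(p\)-closed iff \(\lambda^p=\lambda\), i.e.\ \(\lambda\in\FF_p\). Moreover \(d(\ZZ[\omega])\subset\FF_p\) since \(\omega\in\FF_p\), so elliptic subcurves also give only \(\lambda\in\FF_p\cup\{\infty\}\). I must also show that no other mechanism yields integrability. A first integral would make the foliation \(p\)-closed, since in characteristic \(p\) algebraically integrable foliations are \(p\)-closed: the annihilator of \(k(A)^p(f)\) is a \(p\)-Lie algebra. Thus integrability is equivalent to \(\lambda\in\FF_p\cup\{\infty\}\), which translates to \(\alpha\in\FF_p\cup\{\infty\}\).

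The main obstacle is getting the \(p\)-closedness criterion right. Specifically, I must show that for invariant fields on an ordinary product the \(p\)-curvature of \(\indel{u}+\lambda\indel{v}\) is, up to a nonzero constant, \(\indel{u}+\lambda^p\indel{v}\) in coordinates where each factor's invariant field satisfies \(\partial^p=\partial\). Here I would use the Hasse invariant of \(y^2=4x^3+1\), which is the coefficient of \(x^{p-1}\) in \((4x^3+1)^{(p-1)/2}\). This coefficient is nonzero exactly when \(p\equiv1\pmod3\) and zero when \(p\equiv2\pmod3\). Rescaling the invariant field by a \((p-1)\)-th root, the field stays invariant under diagonal rescaling, which is harmless. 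One also has to check that the birational model of Theorem~\ref{sec:main} has good reduction, so the argument applies for all \(p>3\).
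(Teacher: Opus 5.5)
Your proposal is correct and is essentially the paper's proof. You reduce to \(\mathcal{G}_\lambda\) via Theorem~\ref{sec:main} and use that algebraic integrability equals \(p\)-closedness. You get \((D_0+\lambda D_1)^{[p]}=a_E(D_0+\lambda^pD_1)\) from \([D_0,D_1]=0\); your rescaling to \(\partial^{[p]}=\partial\) only normalizes \(a_E\). You decide whether \(a_E=0\) through the coefficient of \(\xi^{p-1}\) in \((4\xi^3+1)^{(p-1)/2}\), and use \(\omega\in\FF_p\) when \(p\equiv1\pmod 3\).

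Drop the side claims about elliptic subcurves, though. In characteristic \(p\), integrability of a linear foliation is \emph{not} equivalent to the existence of an elliptic curve through \(0\) tangent to \(\ell\). Such curves come from endomorphisms of \(E\), so their slopes are algebraic over \(\FF_p\), whereas for \(p\equiv2\pmod 3\) every slope is integrable, including one transcendental over \(\FF_p\). Your quaternion-order alternative for the supersingular case fails for the same reason. Your actual argument never uses these claims.
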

	
Moreover, as we shall see in Theorem~\ref{thm:p-div}, in the second case, if \(\alpha\notin\FF_p\), every irreducible  invariant algebraic curve  of \(\mathcal{F}^4_\alpha\) is one of the nine lines of the dual Hesse configuration.

While most of our results concern the foliations in the families of degree three and four, they can be easily adapted to produce analogous results for the foliations in the family of degree two.

The authors thank Serge Cantat, Charles Favre and Frank Loray for enlightening conversations on the subject.

\section{The structure of the foliations} 

In this section we present the other two families of foliations   introduced by Lins Neto, \(\mathcal{F}^2\) and~\(\mathcal{F}^3\), and  describe  \(\mathcal{F}^4\) and \(\mathcal{F}^3\) by bringing into them and  further exploiting the ideas behind the study of \(\mathcal{F}^2\) carried out in~\cite{guillot-exemplesLN}. The idea is to lift the foliations to an explicit cover of \(\PP^2\) where they become true vector fields (and not just foliations), that can be integrated. Theorem~\ref{thm:ln-desc} will result from this analysis, and will be proved in Section~\ref{sec:deg4}. We refer the reader to the first chapters of \cite{brunella-birational} for background material on singular holomorphic foliations. We begin by recalling some facts about the Weierstrass elliptic functions that will be used (see~\cite[Ch.~7]{ahlfors} for details).

\subsection{The Weierstrass elliptic functions of equianharmonic lattices} \label{sec:weiers}

Let \(\Lambda\subset \CC\) be a lattice which is invariant under multiplication by the primitive third root of unity \(\omega=\ee^{2\ii\pi/3}\), this is, \(\Lambda=\theta \ZZ[\omega]\) for some \(\theta\in\CC^*\).   The Weierstrass  elliptic function \(\wp\) of \(\Lambda\) satisfies the differential equation 
\begin{equation}\label{eq:weies}
	(\wp')^2=4\wp^3-g_3,\end{equation}
for some \(g_3\in\CC^*\). Reciprocally, for every \(g_3\in \CC^*\), the solution to (\ref{eq:weies}) is an elliptic function whose lattice of periods  has this symmetry. We have 
\begin{align} 
	\wp(-u) & =\wp(u), & \wp'(- u) & =-\wp'(u), \label{sym:two} \\
\wp(\omega u) & =\omega \wp(u),  &  \wp'(\omega u) & =\wp'(u); \label{sym:three}
\end{align}
the addition formulas for \(\wp\) and \(\wp'\) are:
\begin{align} 
	\label{weies-sum}
	\wp(u+v) & =\frac{1}{4}\left(\frac{\wp'(u)-\wp'(v)}{\wp(u)-\wp(v)}\right)^2-\wp(u)-\wp(v),\\
	\label{weiesp-sum}
	\wp'(u+v) & =\frac{\wp'(u)\wp'(v)(\wp'(u)-\wp'(v))}{(\wp(u)-\wp(v))^3}-3\frac{\wp^2(u)\wp'(v)+\wp^2(v)\wp'(u)}{(\wp(u)-\wp(v))^2}.
\end{align}

The elliptic curve \(E=\CC/\Lambda\)  has the automorphism of order six induced by \(\rho_0(u)= -\omega u\). On~\(E\), the function \(\wp\) has a double pole at \(0\) and two simple zeros.  From (\ref{sym:two}), if \(\zeta_0\) is a zero of \(\wp\), so is \(-\zeta_0\). By (\ref{sym:three}), the zeros of \(\wp\) are permuted by the order-three transformation \(\rho_0^2\colon u\mapsto \omega^2 u\), and,  since there are only two of them, they must be fixed by \(\rho_0^2\). If   \(\Lambda=\theta\ZZ[\omega]\), the two fixed points of \(\rho_0^2\) other than \(0\) are the thirds-of-a-period \(\frac{1}{2}(1+2\omega)\theta\) and \(\frac{1}{2}(2+\omega)\theta\), and these   are thus the zeros of \(\wp\) on~\(E\).    If \(\wp(\zeta_0)=0\), \((\wp'(\zeta_0))^2=-g_3\), and
\begin{align*} 
\wp(u+\zeta_0) & =-\frac{g_3+\wp'(u)\wp'(\zeta_0)}{2\wp^2(u)},\\
\wp'(u+\zeta_0) & =\wp'(\zeta_0)+\frac{g_3(\wp'(u)-\wp'(\zeta_0))}{\wp^3(u)}.
\end{align*}

\begin{remark}	\label{rmk:Ealg}
Some of these facts may be expressed algebraically. Let \(E\) be the complete elliptic curve  associated to the affine elliptic curve  \(E_0\subset\CC^2\) of equation \(\zeta^2=4\xi^3-g_3\). The curve has a derivation~\(D_E\), for which \(D_E(\xi)=\zeta\) (and thus \(D_E(\zeta)=6\xi^2\)), and the order-six symmetry  \begin{equation}\label{sym1_alg}(\xi,\zeta)\mapsto(\omega \xi,-\zeta).\end{equation} 
All these objects are purely algebraic. The connection with the previous description of \(E\) is given by its uniformization via the Weierstrass elliptic functions   \(u\mapsto(\wp(u),\wp'(u))\), by means of which \(D_E\)  is the image of \(\indel{u}\). The symmetry (\ref{sym1_alg}) follows from the identities \(\wp(-\omega u)=\omega\wp(u)\) and \(\wp'(-\omega u)=-\wp'(u)\), resulting from (\ref{sym:two}) and~(\ref{sym:three}).
\end{remark}

\subsection{The family of degree four}\label{sec:deg4}
Consider the quartic homogeneous polynomial vector fields on~\(\CC^3\)
\begin{align*}
X_4 & = x(x^3-2y^3-2z^3)\frac{\partial}{\partial x} +  y(y^3-2x^3-2z^3)\frac{\partial}{\partial y} +    z(z^3-2x^3-2y^3)\frac{\partial}{\partial z}, \\
Y_4 & =y^2z^2\frac{\partial}{\partial x}+ x^2z^2\frac{\partial}{\partial y}+x^2y^2 \frac{\partial}{\partial z}.
\end{align*}
Through the projection of its integral curves, the homogeneous vector field \(X_4+3\alpha Y_4\)  
induces,  on~\(\PP^2\), Lins Neto's foliation \(\mathcal{F}_{\alpha}^{4}\), the one given by the kernel of~(\ref{eq:f4linsneto}). 

The   vector fields \(X_4\) and \(Y_4\) commute (their Lie bracket is identically zero), and  have a common first integral in the homogeneous polynomial of degree nine 
\[H=(x^3-y^3)(y^3-z^3)(z^3-x^3),\]
whose vanishing defines the lines of the dual Hesse configuration~(\ref{arr:hesse}). 
We will consider the following basis for the space of linear combinations of \(X_4\) and~\(Y_4\), 
\begin{align*}
	Z_0& ={\textstyle \frac{1}{3}}X_4+Y_4, &
Z_1 & =(1+2\omega)Y_4.
\end{align*}
 
Since
\begin{equation}\label{z0z1linind}
	Z_0\wedge Z_1\wedge\left(x\del{x}+y\del{y}+z\del{z}\right)=(1+2\omega)H\del{x}\wedge\del{y}\wedge\del{z},\end{equation}
these are  linearly independent in the complement of \(H=0\). 

Consider the homogeneous polynomials of degree six
\begin{align*} 
	p_0 & =(x^3-z^3)(y^3-z^3),\\
	p_1 & = -(y^2+yz+z^2)(x^2+xz+z^2)(x^2+xy+
y^2).\end{align*}
For these, for \(j=0,1\), 
\begin{equation}\label{p.1st.int}Z_{1-j}p_j=0.\end{equation}
Let \(q_j=Z_jp_j\), this is, 
\[q_0 =   - (x^3-z^3)(y^3-z^3)(x^3+y^3-2z^3),\]
\begin{multline*}q_1  =  -  (1+2\omega)(x^2+xy+y^2)(y^2+yz+z^2)(x^2+xz+z^2)\times\\ \times  
(x^2y+x^2z+xy^2+xz^2+y^2z+yz^2)
\end{multline*}
(these are homogeneous polynomials of degree nine). Observe that \(p_j\) divides \(q_j\). The commutativity of the vector fields and relation (\ref{p.1st.int})  imply that \(Z_{1-j}q_j=0\):
\[Z_{1-j}q_j=Z_{1-j}(Z_jp_j)=Z_j(Z_{1-j}p_j)=0.\] 
The first integrals of \(Z_{1-j}\) are bound by the relation
\[ q_j^2=4p_j^3+H^2.\]
Let \( \mu=\ee^{2\ii\pi/9}\)  (so that \(\mu^3=\omega\)), and consider the diagonal linear transformation of order nine \(\eta:\CC^3\to \CC^3\),
\begin{equation}\label{quot:ordernine} \eta(x,y,z) = ( \mu x,  \mu y, \mu z).\end{equation}
Let \(\Sigma_4'=H^{-1}(1)\).  It is invariant by \(\eta\),  and the quotient is realized by the  restriction of the projection \(\Pi\colon \CC^3\setminus \{0\}\to \PP^2\) to \(\Sigma_4'\).  The vector fields \(Z_0\) and \(Z_1\) are both multiplied by \(\omega\) under the action of \(\eta\), and the foliation induced by any one of their linear combinations is preserved under this action. The  foliation on \(\PP^2\) induced by the restriction of \(Z_0+ \lambda Z_1\) to \(\Sigma_4'\) is, for \(\alpha\) as in Eq.~(\ref{lambda_to_alpha}),  Lins Neto's foliation~\(\mathcal{F}^4_\alpha\).

Let \(\Sigma_4\) be the quotient of \(\Sigma'_4\) under the action of \(\eta^3\). Since the vector fields \(Z_i\) and  the functions \(p_i\) and \(q_i\) are all invariant under the action of \(\eta^3\), they induce well-defined objects on \(\Sigma_4\), which we will denote by the same symbols. We have an order-three transformation \(\overline{\eta}\colon \Sigma_4\to \Sigma_4\) induced by the action of \(\eta\) on~\(\Sigma'_4\). The quotient of \(\Sigma_4\) under the action of \(\overline{\eta}\) is that of \(\Sigma_4'\) under the action of~\(\eta\).

Consider the algebraic description of \(E\times E\) that follows from that of \(E\) (Remark~\ref{rmk:Ealg}): the projective surface associated to the affine surface on \(\CC^4\) defined, in the coordinates \((\xi_0,\zeta_0,\xi_1,\zeta_1)\), by \(\zeta_i^2=4\xi_i^3+1\) for \(i=0,1\). We consider the order-six symmetry   of \(E\times E\) given by the diagonal action of (\ref{sym1_alg}), 
\begin{equation}\label{rho-alg}
	\rho(\xi_0,\zeta_0,\xi_1,\zeta_1)=(\omega\xi_0,-\zeta_0,\omega\xi_1,-\zeta_1),
\end{equation}
as well as the derivations \(D_0\) and \(D_1\) on \(E\times E\) coming from considering the derivation \(D_E\)   on each factor (that is, \(D_i(\xi_i)=\zeta_i\), \(D_i(\xi_{1-i})=0\)). A birational model for \((E\times E)/\langle\rho^2\rangle\) may be given as follows.  Observe that \(\zeta\colon E\to\PP^1\)  realizes the quotient of the action of \(\rho_0^2\) on \(E\), as it is invariant under the latter and has degree three. In consequence, the map \(E\times E\to \PP^1\times\PP^1\) given by \((\xi_0,\zeta_0,\xi_1,\zeta_1)\mapsto (\zeta_0,\zeta_1)\) is invariant under the action of~\(\rho^2\) on \(E\times E\), and has degree nine. The nine points of its generic fiber form three orbits under the action of~\(\rho^2\), that can be distinguished by  the value that \(\xi_0/\xi_1\) takes on them. Thus, the map \(\phi\colon E\times E\to (\PP^1)^3\) given by \((\xi_0,\zeta_0,\xi_1,\zeta_1)\mapsto (\xi_0/\xi_1,\zeta_0,\zeta_1)\)  realizes the quotient of \(E\times E\) under the action of~\(\rho^2\).  Let \(M=\overline{\phi(E\times E)}\). 

Let \(F\colon\Sigma_4\to E\times E\) be given by 
\[ F(x,y,z)= (p_0,q_0,p_1,q_1).\]
It  is  equivariant with respect to the action of \(\ZZ/3\ZZ\) given by \(\overline{\eta}\) on \(\Sigma_4\) and by~\(\rho^2\)   on \(E\times E\).  Thus, it induces a map   \(\overline{F}\colon\PP^2\to (E\times E)/\langle\rho^2\rangle\)  between the respective quotients, and the following diagram commutes:
\[
\begin{CD}
	\Sigma_4 @>F>> E\times E\\
	@VV{/\overline{\eta}}V @VV{/\rho^2}V\\
	\PP^2 @>\overline{F}>>  (E\times E)/\langle\rho^2\rangle
\end{CD}. \]
The map \(F\) will be a birational isomorphism if and only if so is \(\overline{F}\). We will show that this is indeed the case; the map \(\Phi\) in  Theorem~\ref{thm:ln-desc} will appear as the inverse of~\(\overline{F}\).

In the birational model \(M\) of \((E\times E)/\langle\rho^2\rangle\),  \(\overline{F}\colon\PP^2\dashrightarrow M\) is given by
\begin{equation}\label{inv_of_phi}\overline{F}(x:y:z)=\left(\frac{p_0}{p_1},\frac{q_0}{H}, \frac{q_1}{H}\right).\end{equation}
The birationality of \(\overline{F}\) (or, equivalently, of \(F\)) amounts to the following result:
\begin{lemma}\label{lemma:ratfun} The field of rational functions of \(\PP^2\) is generated by 
\begin{align*}
	\frac{p_0}{p_1} & =-\frac{(x-z)(y-z)}{x^2+xy+
		y^2},\\
	\frac{q_0}{H} & =\frac{x^3+y^3-2z^3}{x^3-y^3}, \\ \frac{q_1}{H}& =-(1+2\omega)\frac{(x^2y+x^2z+xy^2+xz^2+y^
		2z+yz^2)}{(x-y)(y-z)(z-x)}.
\end{align*}
\end{lemma}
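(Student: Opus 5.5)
The plan is to prove the statement by a degree count: show that the field generated by the three functions in the lemma has index one in \(\CC(\PP^2)=\CC(s,t)\), where \(s=x/z\) and \(t=y/z\). Write
\[
f=\frac{p_0}{p_1},\qquad g=\frac{q_0}{H},\qquad h=\frac{q_1}{H},
\]
and \(K=\CC(f,g,h)\subset\CC(s,t)\). The model \(M\) already suggests the structure. On \(E\times E\) the map \((\zeta_0,\zeta_1)\) has degree nine, and its generic fibre splits into three \(\rho^2\)-orbits, which are separated by \(\xi_0/\xi_1\). So I expect \((g,h)\colon\PP^2\dashrightarrow\PP^1\times\PP^1\) to have degree \(3\), and \(f\) to separate the three points of a generic fibre. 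The argument then has two steps.

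\emph{Step 1: the degree of \((g,h)\).} Both \(g\) and \(h\) are quotients of cubics, so each defines a pencil of cubics. The degree of \((g,h)\) is the number of points in which a general member of the pencil \(\{x^3+y^3-2z^3-a(x^3-y^3)=0\}\) meets a general member of the pencil \(\{e-b(x-y)(y-z)(z-x)=0\}\), with \(e=x^2y+x^2z+xy^2+xz^2+y^2z+yz^2\) and the constant \(-(1+2\omega)\) absorbed into \(b\). This number is \(9\) minus the local intersection multiplicities at the common base points of the two pencils.
\begin{itemize}
\item The first pencil has base locus \(\{x^3=y^3=z^3\}\), which is the nine points of \(\mathcal{P}_1\cup\mathcal{P}_\omega\cup\mathcal{P}_{\omega^2}\).
\item I will check which of these lie on both \(e=0\) and \((x-y)(y-z)(z-x)=0\). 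For example, \((1:1:1)\) does, since \(e(1,1,1)=6\ne0\) forces one to test more carefully; the honest test is evaluating \(e\) and the product at each of the nine points.
\item At each common base point I will compute the local intersection multiplicity of general members. I expect this to be \(1\) at each such point, because the general members are smooth there with distinct tangents.
\end{itemize}
The aim is to find six common base points, which gives degree \(9-6=3\). This is the step I expect to be the main obstacle. It requires care with tangencies and with possible infinitely near base points, because a miscount changes the degree. As a safeguard, I will cross-check the count by restricting to a convenient line or to an explicit fibre.

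\emph{Step 2: \(f\) separates the fibres.} Granting Step 1, \([\CC(s,t):\CC(g,h)]=3\). By the tower law, \(K=\CC(s,t)\) if and only if \([\CC(g,h,f):\CC(g,h)]=3\). This holds as soon as \(f\) takes three distinct values on the generic fibre of \((g,h)\). By semicontinuity, it is enough to exhibit a single fibre of \((g,h)\) that consists of exactly three reduced points, lies away from the indeterminacy loci, and on which \(f\) takes three distinct values. I will find such a fibre by choosing convenient values of \(g\) and \(h\), solving the resulting system directly, and evaluating \(f=-(x-z)(y-z)/(x^2+xy+y^2)\) at the solutions. An alternative to this fibre computation is to identify the three points of a fibre as the orbit of an explicit order-three birational map that fixes \(g\) and \(h\), and then check that \(f\) is not invariant under it.

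Equivalently, this shows that \(\overline{F}\) in (\ref{inv_of_phi}) is generically injective onto \(M\), which is the birationality being claimed.
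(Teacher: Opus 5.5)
Your proof is correct, but it is organized differently from the paper's. The paper never considers the map \((g,h)\) (with \(f=p_0/p_1\), \(g=q_0/H\), \(h=q_1/H\) as in your notation). Instead it pairs the conic pencil \(f\) with each cubic pencil separately. A conic and a cubic meet in six points, and four of them are the base points \((1:\omega:1)\), \((1:\omega^2:1)\), \((\omega:1:1)\), \((\omega^2:1:1)\) of the conic pencil, which are also base points of both cubic pencils. Hence \((f,g)\) and \((f,h)\) both have degree two, with deck involutions \(\alpha\mapsto\beta_0\) and \(\alpha\mapsto\beta_1\). The lemma then reduces to these two involutions being different, which the paper checks at \(\alpha=(0:1:2)\), where \(\beta_0=(-6:-5:8)\) and \(\beta_1=(-5:-6:8)\).

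Your route uses the triple cover \((g,h)\), the counterpart of \((\zeta_0,\zeta_1)\) on \(M\), and separates its fibres with \(f\), the counterpart of \(\xi_0/\xi_1\). This mirrors the construction of \(M\) more faithfully, and it shows that the cover is cyclic. Its deck group is induced by \((u,v)\mapsto(\omega u,v)\) and generated by \(S_4^2\), for the map \(S_4\) of Theorem~\ref{thm:bir-f4}. The identities \(S_4^*f=\omega f\), \(S_4^*g=-g\), \(S_4^*h=h\) are polynomial identities that can be checked without the lemma, and they give your alternative ending. The price is a heavier explicit step. The residual point of a degree-two map is obtained by rational operations, whereas the other points of a fibre of \((g,h)\) are not defined over the field of the first one. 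For example, the fibre through \((0:1:2)\), where \(g=15\), \(h=-3(1+2\omega)\) and \(f=-2\), contains \(S_4^2(0:1:2)=(4\omega^2-4\omega:1+8\omega^2:16+2\omega^2)\), where \(f=-2\omega^2\).

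Two small corrections. First, your sentence about \((1:1:1)\) contradicts itself. That point lies on \((x-y)(y-z)(z-x)=0\), but \(e(1,1,1)=6\neq0\), so it is not a common base point. The common base points are exactly the six points of \(\mathcal{P}_\omega\cup\mathcal{P}_{\omega^2}\). Step 1 is also not a real obstacle. Each pencil is spanned by two cubics meeting in nine distinct points, hence transversally. So the tangent lines at a base point of the members of either pencil sweep out all lines through it, general members of the two pencils meet with multiplicity one there, and the degree is \(9-6=3\). Second, since \(3\) is prime, Step 2 only needs \(f\notin\CC(g,h)\). Two distinct values of \(f\) on a good fibre suffice; you do not need three.
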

\begin{proof}  
	
If the point \(\alpha\) of \(\PP^2\) is not a base point of any one of these three pencils, there is a unique curve from each pencil passing through it.  The statement is equivalent to the fact that these three curves, that are determined by \(\alpha\), actually characterize it.
 
The base points   of the pencil of conics \(p_0:p_1\) are \((1:\omega:1)\), \((1:\omega^2:1)\), \((\omega:1:1)\) and \((\omega^2:1:1)\). These are also among the base points of each one of the pencils of cubics \(q_0:H\) and \(q_1:H\). Let \(j\in\{0,1\}\). If \(\alpha\in\PP^2\)   is not a base point of any of the pencils, there exists a unique \(\beta_j\in\PP^2\), generically different from \(\alpha\) and from the four base points above, such \(\alpha\) and \(\beta_j\) lie in the same element of the pencils \(p_0:p_1\) and \(p_j:H\). The claim amounts to show that, generically, \(\beta_0\neq \beta_1\). For instance,  for \(\alpha=(0:1:2)\), which is not a base point of any of the pencils, we have the different points	\(\beta_0 = (-6:-5:8)\)   and 
\(\beta_1  =(-5:-6:8)\).\end{proof}

Thus, by this lemma, \(F\colon\Sigma_4\to E\times E\) is a birational isomorphism.  It establishes a correspondence  between the functions \(p_0\), \(q_0\), \(p_1\) and \(q_1\)  on \(\Sigma_4\), and~\(\xi_0\), \(\zeta_0\), \(\xi_1\) and~\(\zeta_1\), respectively, on \(E\times E\). Since \(Z_jp_{1-j}=0\) and \(Z_jp_{j}=q_j\), this correspondence identifies \(Z_0\) with \(D_0\) and \(Z_1\) with \(D_1\). We have summarized this in Table~\ref{tab:corrbis}, where, in the third column, we have included the transcendental description in terms of the uniformization of \(E\times E\) via Weierstrass functions.  We conclude that \(\overline{F}\), which is also a birational isomorphism, identifies Lins Neto's foliation \(\mathcal{F}_\alpha^4\)  with the foliation on \((E\times E)/\langle\rho^2\rangle\) induced by the vector field \(D_0+\lambda D_1\) on \(E\times E\) for \(\lambda\) as in (\ref{eq:slope}). 

\begin{table}
	\begin{tabular}{cccc}
	& \multirow{2}{*}{On \(\Sigma_4\)}	&   On \(E\times E\) & On \(E\times E\)  \\ & & (algebraic) & (transcendental) \\
	\hline
	\multirow{4}{*}{functions} & \(p_0\)	&  \(\xi_0\) &\(\wp(u)\) \\
	
	&\(p_1\)	&  \(\xi_1\) &\(\wp(v)\) \\
	
	&\(q_0\)	& \(\zeta_0\) &  \(\wp'(u)\) \\
	&\(q_1\)		& \(\zeta_1\)	&  \(\wp'(v)\) \\ \hline
	
	\multirow{2}{*}{vector fields} & \(Z_0\) & \(D_0\) &  \(\indel{u}\) \\ 
	& \(Z_1\) &  \(D_1\) &\(\indel{v}\) \\     \hline
	order-three symmetries & \(\overline{\eta}\) & \(\rho^2\)  & \(\rho^2\)   \\  \hline 
\end{tabular}
	\caption{Correspondence of objects on \(\Sigma_4\) and  on \(E\times E\) induced by \(F_4\).}\label{tab:corrbis}
\end{table}

From Lemma~\ref{lemma:ratfun}, it is, in principle, possible to express \((x:y:z)\) rationally in terms of the components  of the expression of \(\overline{F}\) in Eq.~(\ref{inv_of_phi}). This can be done explicitly: 
\begin{multline}\label{bir_inverse_of_F}
	(x:y:z)  =  \left(\frac{p_0}{p_1} \left(\left(\frac{q_1}{H}\right)^2+3\right)+6 \frac{p_1}{p_0} \left(\frac{q_0}{H}+1\right) \right. : \\     : \frac{p_0}{p_1}\left( \left(\frac{q_1}{H}\right)^2 -2(1+2 \omega)\frac{q_1}{H}-3\right)+2(1+2 \omega ) \frac{q_1}{H}-6 : \\  :\left. \frac{p_0}{p_1}\left(\left(\frac{q_1}{H}\right)^2+2(1+2 \omega )\frac{q_1}{H}-3\right)+(1+2 \omega ) \frac{q_0}{H} \frac{q_1}{H}+3 \frac{q_0}{H}+(1+2\omega ) \frac{q_1}{H}+3\right).
\end{multline}

The expression \(\Phi\) of Theorem~\ref{thm:ln-desc} is a rewriting of this formula in terms of the Weierstrass functions, following the correspondence in Table~\ref{tab:corrbis}. This establishes Theorem~\ref{thm:ln-desc}. It may, of course, also be established by a direct calculation.

Theorem~\ref{thm:ln-desc} gives the setting for the following result, the first part of which originally appeared in~\cite{guillot-exemplesLN} and in~\cite[Thm.~1]{lins_neto-pencils}.	
	
\begin{corollary}\label{int:cond}The foliation \(\mathcal{F}_\alpha^4\) has a rational first integral if and only if \(\alpha\in \QQ(\omega)\). If \(\alpha\notin \QQ(\omega)\), every irreducible algebraic curve tangent to \(\mathcal{F}_4^\alpha\) is a line of the dual Hesse configuration~(\ref{arr:hesse}).
\end{corollary}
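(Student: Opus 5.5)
The plan is to transport everything to \(E\times E\) through the birational isomorphism \(\overline{F}\) of Theorem~\ref{thm:ln-desc}. The quotient map \(E\times E\to (E\times E)/\langle\rho^2\rangle\) is finite. A birational map preserves the existence of a rational first integral. Pulling back and pushing forward (by taking symmetric functions over the \(\rho^2\)-orbit) preserves it under the finite quotient. Hence \(\mathcal{F}^4_\alpha\) has a rational first integral if and only if the linear foliation on \(E\times E\) induced by \(\indel{u}+\lambda\indel{v}\) does, with \(\lambda=\frac{1}{3}(1+2\omega)(1-\alpha)\) as in Eq.~(\ref{eq:slope}). Since \(1+2\omega=\sqrt{-3}\in\QQ(\omega)\), the relation between \(\alpha\) and \(\lambda\) is affine with coefficients in \(\QQ(\omega)\). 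Therefore \(\alpha\in\QQ(\omega)\) if and only if \(\lambda\in\QQ(\omega)\), and \(\alpha=\infty\) corresponds to \(\lambda=\infty\), the foliation \(\indel{v}\).

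Next I would treat the linear foliation on \(E\times E\), with \(\Lambda=\theta\ZZ[\omega]\). Its leaves are the images of the lines \(v=\lambda u+c\). Such a leaf is compact, that is, an elliptic curve, exactly when the line contains a rank-two sublattice of \(\Lambda\times\Lambda\). This means there exist \(\ell_1,\ell_2\in\Lambda\), \(\RR\)-independent, with \(\lambda\ell_i\in\Lambda\), which forces \(\lambda\in\QQ(\omega)\) because \(\lambda=\lambda\ell_1/\ell_1\in\QQ(\omega)\). Conversely, if \(\lambda=p/q\) with \(p,q\in\ZZ[\omega]\), the map \(E\times E\to E\), \((u,v)\mapsto pu-qv\), is a morphism whose fibers are the leaves, which gives a fibration. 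Composing with \(\wp\) produces a rational first integral. If instead \(\lambda\notin\QQ(\omega)\), the closure of a generic leaf is a real subtorus of dimension \(\geq 3\), so there is no first integral.

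For the second assertion, suppose \(\alpha\notin\QQ(\omega)\) and let \(C\subset\PP^2\) be an irreducible invariant algebraic curve not contained in \(H=0\). Its strict transform under \(\overline{F}^{-1}\), lifted to \(E\times E\), has an irreducible component \(\tilde C\) that is an algebraic curve invariant by the linear foliation. This holds away from the indeterminacy and exceptional loci, which I must control. Invariant algebraic curves of a linear foliation on a torus are compact leaves, and these exist only for \(\lambda\in\QQ(\omega)\cup\{\infty\}\) by the previous paragraph; this is a contradiction. So \(C\) must be one of the curves contracted or created by \(\overline{F}\), which lie inside \(\{H=0\}\) or among the other curves blown down by the birational map.

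The main obstacle is this last step: identifying exactly which curves of \(\PP^2\) fail to correspond to curves in \(E\times E\) that are invariant in the ordinary sense. I would check that the curves contracted by \(\overline{F}\) lie in \(H=0\), i.e.\ the nine Hesse lines, since \(\Sigma'_4=H^{-1}(1)\) maps isomorphically off \(H=0\) onto its image by Lemma~\ref{lemma:ratfun}. Any other exceptional curves from the fixed points of \(\rho^2\) map to points of \(\PP^2\), not curves. Finally, the nine lines are indeed invariant, since \(H\) is a first integral of \(X_4\) and \(Y_4\).
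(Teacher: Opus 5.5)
Your proposal is correct and follows the paper's own route: reduce to the linear foliation on \(E\times E\) through \(F\) and the finite quotient by \(\rho^2\), show that a closed leaf exists exactly when \(\lambda\in\QQ(\omega)\) (equivalently \(\alpha\in\QQ(\omega)\)), and push an invariant curve not contained in \(\{H=0\}\) through \(\Sigma_4\) to a compact leaf. One correction on the step you flag as the main obstacle: Lemma~\ref{lemma:ratfun} only gives birationality, and that alone does not exclude contracted curves; what does is that \(F\) is étale on \(\Sigma_4\), because it relates \(Z_0,Z_1\) to the everywhere independent \(D_0,D_1\) and \(Z_0\wedge Z_1\neq 0\) off \(H=0\) by (\ref{z0z1linind}), so no curve of \(\Sigma_4\) is contracted and the exceptional curves over the fixed points of \(\rho^2\) never enter the argument.
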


\begin{proof} For the first part, observe that the foliation on \(\PP^2\) will have a rational first integral if and only of the original foliation on \(E\times E\) does, and that this happens if and only if all of its leaves are closed. Since all the leaves are obtained from a single one by translation, the foliation will have a first integral if and only if the leaf  of \(\indel{u}+\lambda \indel{v}\) through \(0\) gives a closed leaf of  the foliation on  \(E\times E\). This happens if and only if the line \(L\) in  \(\CC^2\) which is the orbit of the vector field \( \partial/\partial u+\lambda\partial/\partial v\) through \(0\)  intersects \(\Lambda^2\) along a lattice. If \(L\) intersects \(\Lambda^2\) at a nonzero point \((u,v)\), we have that \(\lambda\in \QQ(\omega)\), and that \(L\)  also intersects \(\Lambda^2\) at the point \((\omega u,\omega v)\), thus intersecting \(\Lambda^2\) along a lattice. Reciprocally, if \(\lambda \in \QQ(\omega)\), and \(\lambda=p/q\) with \(p,q\in \ZZ[\omega]\), both \((q,p)\) and \((\omega q,\omega p)\) belong to the intersection of \(L\) and~\(\Lambda^2\). 
	
For the second, let \(C\subset \PP^2\) be an invariant algebraic curve for \(\mathcal{F}_\alpha^4\) that is not a union of lines of the Hesse arrangement. The preimage of \(C\cap \{H\neq 0\}\) in \(\Sigma_4\) is an algebraic curve, that maps under \(F\) to an algebraic curve in \(E\times E\) tangent to the  foliation generated by \(\indel{u}+\lambda v\indel{v}\) for some \(\lambda\in\CC\). But since this foliation has one closed curve, all of its curves are closed, and thus \(\lambda\)  (and hence ~\(\alpha\) as well) belongs to \(\QQ(\omega)\).
\end{proof}

\subsection{The family of degree three}\label{sec:deg3} Lins Neto's  family \(\mathcal{F}^3\) of foliations of degree three is formed by the foliations on \(\PP^2\) of degree three that are tangent to the configuration of two conics and three lines
\begin{equation}\label{conf_f3}(x^2-4yz)(x^2-yz)(x-y-z)(x-\omega y-\omega^2z)(x-\omega^2 y-\omega z)=0.\end{equation}
For \(\alpha\in\CC\), the foliation  \(\mathcal{F}_{\alpha}^{3}\)  is given by the kernel of $\Omega + \alpha \Theta$, for 
\[\Omega = y(2z^3-x^3+3xyz)\dd x+(x^4-xz^3+2y^2z^2-4yzx^2)\dd y  +y(x^2y-xz^2-2y^2z)\dd z,\] 
\[\Theta = z(x^3-3xyz-2y^3)\dd x+z(2yz^2-x^2z+xy^2)\dd y+(xy^3-2y^2z^2+4yzx^2-x^4)\dd z,\]
while \(\mathcal{F}_{\infty}^{3}\) is given by the kernel of~\(\Theta\).
The curve \(\mathcal{C}\) of Eq. (\ref{conf_f3})  has as irreducible components the three lines 
\begin{align}\label{triangle_f3}
	l_0  & =x-y-z, & 
	l_1  & = x-\omega^2 y-\omega z, &
	l_2 & = x-\omega y-\omega^2z,\end{align}
and the two conics
\begin{align}\label{conics_f3}
	C_1&=x^2-4yz,	 & C_2=x^2-yz.
\end{align}
The first conic is tangent to each one of the three lines, and the  second one passes through their three pairwise intersection points; the conics are   tangent at the points \((0:1:0)\) and \((0:0:1)\), which account thus for all of the points in their intersection;  see Figure~\ref{fig:curve_f3}. Having degree \(7\), the curve \(\mathcal{C}\)  also gives the locus of tangencies of any two foliations in the family; in particular, all the  singular points of every foliation in \(\mathcal{F}^3\) belong to \(\mathcal{C}\). The generic foliation in \(\mathcal{F}^3\) has \(13\) non-degenerate singularities: 
\begin{itemize}
	\item three radial ones (with ratio of eigenvalues \(1:1\)) at the vertices of the triangle \(l_0l_1l_2\);
	\item three at the tangencies of the \(l_i\) with \(C_1\), and two at the two points in \(C_1\cap C_2\), all of them with ratio of eigenvalues \(2:1\); and
	\item five mobile ones, one on each irreducible component of \(\mathcal{C}\), with ratio of eigenvalues \(-6:1\) for the point on \(C_1\), and  $-3:1$ for the four other ones.
\end{itemize}

\begin{figure}
\centering	
\includegraphics[width=0.45\textwidth]{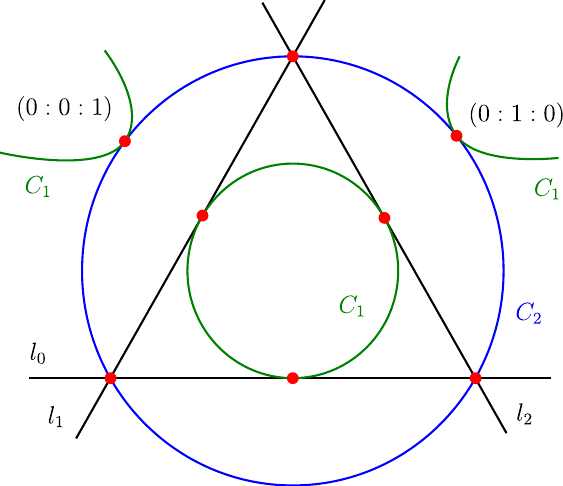}
	\caption{The configuration of curves \(\mathcal{C}\) for the family \(\mathcal{F}^3\), as sketched by Lins Neto. } \label{fig:curve_f3}
\end{figure}

Lins Neto's description of these foliations goes as follows \cite[Sect.~2.3]{linsneto}: the foliation \(\mathcal{F}_{\alpha}^{4}\) is invariant under the linear involution
\begin{equation}\label{sym2}
	J(x:y:z)= (y:x:z).
\end{equation} 
The   rational map \(\tau\)
\begin{equation}\label{quot_propj} 
	\tau(x:y:z)	= (z(x+y):xy:z^2)
\end{equation} 
realizes the quotient, and the foliation \(\mathcal{F}_{\alpha}^{3}\) is the image of \(\mathcal{F}_{\alpha}^{4}\) under this map. (We will be using the same homogeneous coordinates \((x:y:z)\)  in both in the domain and the range of~\(\tau\).)

A description of \(\mathcal{F}^3\) compatible with the one given in Theorem~\ref{thm:ln-desc} for  \(\mathcal{F}^4\)  is given by  following result:	 
\begin{proposition}\label{prop-rel34} 
The birational identification \(\Phi\colon(E\times E)/\langle\rho^2\rangle\dashrightarrow\PP^2\) of Theorem~\ref{thm:ln-desc} is equivariant with respect to the actions of \(\ZZ/2\ZZ\) given  by \(\rho\) on \((E\times E)/\langle\rho^2\rangle\) and by the linear involution \(J\) of Eq.~(\ref{sym2}) on \(\PP^2\). Hence, the map \((E\times E)/\langle\rho\rangle\dashrightarrow\PP^2\) obtained by composing the map \(F\) of Theorem~\ref{thm:ln-desc} with the map of Eq.~(\ref{quot_propj}) is a birational one. It maps the foliation \(\indel{u}+\lambda \indel{v}\) to Lins Neto's foliation \(\mathcal{F}_\alpha^3\) for \(\alpha\) as in Eq.~(\ref{lambda_to_alpha}).
\end{proposition}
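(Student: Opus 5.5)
The plan is to check the equivariance on the $\Sigma_4$ side, where everything is polynomial, and then transport it through $F$ using Table~\ref{tab:corrbis}. On $E\times E$, the involution induced by $\rho$ on the quotient by $\rho^2$ acts in the algebraic coordinates by $(\xi_0,\zeta_0,\xi_1,\zeta_1)\mapsto(\omega\xi_0,-\zeta_0,\omega\xi_1,-\zeta_1)$. Since $\rho^2$ has been divided out, this induced action is an involution.

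On $\Sigma_4$ I will lift $J$ to $\tilde J(x,y,z)=(y,x,z)$ composed with a suitable power of $\eta$. I look for the power that makes
\[
p_j\circ\tilde J=\omega p_j,\qquad q_j\circ\tilde J=-q_j .
\]
First I check the effect of the plain swap. Under $x\leftrightarrow y$, $H$ changes sign, while $p_0$ and $p_1$ are symmetric. Also $q_0$ is symmetric, and $q_1$ is symmetric in $x,y$ because $x^2y+x^2z+\cdots$ and the quadratic factors are symmetric.

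Next I correct the sign. Composing with $\eta^{k}$ multiplies the sextics by $\mu^{6k}=\omega^{2k}$ and the nonics by $\mu^{9k}=\omega^{3k}=1$, so nonics cannot be negated this way. Instead I use $(x,y,z)\mapsto -(y,x,z)$, an honest point of $\Sigma_4'$ since $H(-y,-x,-z)=-H(y,x,z)=H(x,y,z)$. This map sends $H\mapsto H$, $q_j\mapsto -q_j$ and $p_j\mapsto p_j$. Composing further with $\eta^{k}$ with $\omega^{2k}=\omega$, i.e.\ $k=2$, gives exactly the formula for $\rho$.

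It remains to confirm that this lift is compatible with the group actions. Since it commutes with $\eta$ modulo $\eta^3$, it descends to $J$ on $\PP^2=\Sigma_4'/\langle\eta\rangle$. Then, by birationality of $F$ (Lemma~\ref{lemma:ratfun}), $F\circ\tilde J=\rho\circ F$, which is the asserted equivariance of $\Phi=\overline F^{-1}$.

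For the second claim I will argue as follows. $\tau$ is the quotient of $\PP^2$ by $J$, because $x+y$ and $xy$ generate the symmetric functions. Equivariance then gives a birational map $(E\times E)/\langle\rho\rangle\dashrightarrow\PP^2/\langle J\rangle$, and composing with $\tau$ yields a birational map to $\PP^2$.

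For the foliation, the statement that $\tau$ maps $\mathcal{F}^4_\alpha$ to $\mathcal{F}^3_\alpha$ is Lins Neto's description recalled above. Together with Theorem~\ref{thm:ln-desc}, this shows the image of $\indel{u}+\lambda\indel{v}$ is $\mathcal{F}^3_\alpha$, with $\alpha$ given by Eq.~(\ref{lambda_to_alpha}). Note also that $\rho$ maps $D_0+\lambda D_1$ to a multiple of itself, so the foliation does descend to the quotient by $\rho$.

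I expect the main obstacle to be the sign bookkeeping in the first step: choosing the correct lift so that $\zeta_j\mapsto-\zeta_j$ and $\xi_j\mapsto\omega\xi_j$ hold simultaneously. If the computation instead produced $\omega^2$, I would use that the conjugate involution $\rho^{-1}=\rho\rho^{-2}$ induces the same map on the quotient.
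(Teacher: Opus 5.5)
Your proposal is correct and follows the paper's proof. The paper also lifts $J$ to $\Sigma_4'$ via $\sigma(x,y,z)=(-y,-x,-z)$, checks $\sigma^*p_j=p_j$ and $\sigma^*q_j=-q_j$, and transports this through Table~\ref{tab:corrbis}. The only difference is cosmetic: the paper pairs $\sigma$ directly with $\rho^3\colon(u,v)\mapsto(-u,-v)$, which induces the same involution as $\rho$ on $(E\times E)/\langle\rho^2\rangle$, so your extra composition with $\eta^2$ is harmless but unnecessary.
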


\begin{proof} The linear involution of \(\CC^3\), 
	\begin{equation}\label{inv:4to3}
		\sigma(x,y,z)= (-y,-x,-z),
	\end{equation} 
induces the  involution \(J\) of Eq.~(\ref{sym2}) on \(\PP^2\). It	
commutes with \(\eta\), and preserves \(H\) together with its level surface \(\Sigma_4\), thus lifting   \(J\)   to \(\Sigma_4\). The involution \(\sigma\) acts upon the functions \(p_i\) and \(q_i\) by
\begin{align*}
\sigma^*p_0 & =p_0,  &  \sigma^*p_1 & =p_1, \\ \sigma^*q_0 & =-q_0,  & \sigma^*q_1 & =-q_1.
\end{align*}
From relation (\ref{sym:two}) and the correspondences of Table~\ref{tab:corrbis}, this implies that \(F\colon \Sigma_4\to E\times E\) is equivariant with respect to the actions of \(\ZZ/2\ZZ\) given by \(\sigma\) on \(\Sigma_4\), and by the involution of \(E\times E\) induced by \(\rho^3\colon  (u,v)\mapsto(-u,-v)\). These actions commute with the previously described actions of \(\ZZ/3\ZZ\), inducing a birational isomorphism between \(\PP^2/\langle \sigma\rangle\) and \((E\times E)/\langle \rho \rangle\), that maps \(\mathcal{F}_{\alpha}^{3}\) to the foliation induced by \( u\indel{u}+\lambda\indel{v}\) on  \((E\times E)/\langle \rho \rangle\), for \(\lambda\) as in~(\ref{eq:slope}). \end{proof}

We may also give a direct description of \(\mathcal{F}^3\) analogous to the one developed in Section~\ref{sec:deg4} for~\(\mathcal{F}^4\),  and which is, moreover, compatible with the previous description of the foliations in~\(\mathcal{F}^4\).  

Consider the linear involution of \(\CC^3\), \(\nu(x,y,z) =(y,x,z)\), whose action on \(\PP^2\) agrees with that of  \(\sigma\). It  preserves the vector fields \(X_4\) and \(Y_4\). The first integral \(H\) changes sign under the action of  \(\nu\), and, in particular, the image of \(\Sigma_4'\) under \(\nu\) is disjoint from~\(\Sigma_4'\).  The quotient of \(\CC^3\) under the action of \(\nu\) is  birationally  realized  by the  homogeneous rational map \(\Upsilon\colon \CC^3\dashrightarrow\CC^3\), 
\(\Upsilon(x,y,z)=(x+y,xy/z,z)\). The invariant first integral  \(-H^2\)  is mapped by \(\Upsilon\) to the homogeneous polynomial
\begin{equation}\label{vfdeg3-1stint} g=-z^6(x^2-4yz)(x^2-yz)^2(x-y-z)^2(x-\omega y-\omega^2z)^2(x-\omega^2 y-\omega z)^2,\end{equation}
of degree 18. The images of   \(X_4\) and \(Y_4\) are, respectively, the quartic homogeneous  vector fields
\begin{multline}	X_3   =  (6y^2z^2-6x^2yz+x^4-2xz^3)\del{x}+y(x^3-3xyz-5z^3)\del{y}+\\ +z(6xyz-2x^3+z^3)\del{z}, \label{vfdeg3-x} \end{multline}  
\begin{equation}	Y_3   =  (x^2 -2yz)z^2\del{x}+z(x^3-y^3-3xyz)\del{y}+y^2z^2\del{z}, \label{vfdeg3-y} \end{equation}
which commute as well.

The transformations \(\eta\) of Eq.~(\ref{sym2}) and \(\sigma\) of Eq.~(\ref{inv:4to3}) commute with \(\nu\), and induce well-defined transformations \(\widetilde{\eta}\) and \(\widetilde{\sigma}\) on the quotient. These are given by
\begin{align*}
\widetilde{\eta}(x,y,z) & =  ( \mu x,  \mu y, \mu z),	& \widetilde{\sigma}(x,y,z) &  = (-x,-y,-z),
\end{align*}
for they satisfy the relations \(\Upsilon\circ \eta=\widetilde{\eta}\circ\Upsilon\) and  \(\Upsilon\circ  \sigma=\widetilde{\sigma}\circ\Upsilon\).

For \(\Sigma_3'= g^{-1}(-1)\),  \(\Upsilon|_{\Sigma_4'}\colon \Sigma_4'\dashrightarrow  \Sigma_3'\) is a birational isomorphism that maps \(X_4\) and \(Y_4\) to \(X_3\) and \(Y_3\), respectively, and is equivariant with respect to the actions of \(\eta\) and \(\widetilde{\eta}\) and of \(\sigma\) and~\(\widetilde{\sigma}\). In particular, \(\Upsilon\) induces a birational equivalence between \(\Sigma_4\) and the surface  \(\Sigma_3\) obtained as the quotient of \(\Sigma'_3\) under the action of \(\widetilde{\eta}^3\). Completely analogous  facts to those established for \(\Sigma_4\) can be established for \(\Sigma_3\).  The natural projection \(\pi\colon \Sigma_3 \to \PP^2\)  realizes the quotient of the action of \(\ZZ/6\ZZ\) on \(\Sigma_3\) generated by \(\widetilde{\eta}\)  and~\(\widetilde{\sigma}\).  It maps the foliation generated by \(X_3+3\alpha Y_3\) to \(\mathcal{F}_\alpha^3\).

The family of foliations~\(\mathcal{F}^3\) may be studied independently of \(\mathcal{F}^4\) via the commuting homogeneous vector fields (\ref{vfdeg3-x}) and (\ref{vfdeg3-y}), and their common homogeneous first integral~(\ref{vfdeg3-1stint}).

\subsection{The family of degree two}\label{sec:deg2} It may  be described as the one formed by the foliations on \(\PP^2\) of degree two tangent  to the rational quartic 
\begin{equation}\label{tricuspid} Q:4z(y^2z-3xy^2-x^3)+(3x^2 + y^2)^2=0\end{equation}
and to its only bitangent \(z=0\).
The generic foliation of this family has seven non-degenerate singularities: one  at each one  of the three cusps of the quartic, with ratio of eigenvalues \(3:2\); one  at each one of the two points of contact of the quartic with its bitangent, with ratio of eigenvalues \(2:1\); and two mobile ones, one on each of the curves, with ratio of eigenvalues \(-1:6\) for the one in the quartic and \(-1:3\) for the one on its bitangent.

Lins Neto constructed this family of foliations as an alternative birational model for~\(\mathcal{F}^3\). His construction can be integrated into our discussion as follows. Consider, with the linear forms \(l_i\) of Eq.~(\ref{triangle_f3}), the cubic homogeneous polynomial map \(\Xi\colon \CC^3\to\CC^3\),
\begin{equation}\label{3to2}
	\Xi(x,y,z)=z(l_0(l_2+l_1),l_0(l_2-l_1),2(l_0 l_2+l_2 l_1+l_1 l_0)).
\end{equation}
The transformation it induces  on \(\PP^2\) is the composition of the  quadratic Cremona transformation with base points at the vertices of the triangle \(l_0l_1l_2=0\) with a linear map (after dividing by \(z\), the coordinates of \(\Xi\) are independent linear functions of \(l_0l_1\), \(l_1l_2\) and \(l_2l_0\)), and is thus a birational map. The image of \(\mathcal{F}_\alpha^3\) under this birational map is the foliation of degree two \(\mathcal{F}_\alpha^2\).

The map \(\Xi\) is \(3\)-to-\(1\), and realizes the quotient by \(\eta^3\). All the objects associated to \(X_3\) and \(Y_3\) that are invariant by \(\eta^3\) are defined in the quotient. In the image, the first integral \( g\) of \(X_3\) and \(Y_3\) becomes the homogeneous polynomial of degree six  
\[\frac{1}{1728}z^2(4z(y^2z-3xy^2-x^3)+(3x^2 + y^2)^2).\]
Thus, \(\Xi\) induces a birational equivalence between \(\Sigma_3\) and the affine surface where this first integral takes the value \(-1\). For the images \(X_2\) and \(Y_2\) of \(X_3\) and \(Y_3\) in the quotient, we have
\begin{align*} 
2X_2-6Y_2 &  =(y^2-3x^2+2xz)\del{x}+2y(2z-3x)\del{y}+2z(3x-z)\del{z},\\ 
2(1+2\omega)({\textstyle \frac{1}{3}}X_2+Y_2) & =2(z-x)y\del{x}+(3x^2-y^2)\del{y}+2yz\del{z}, 
\end{align*}
thus recovering the pair of commuting vector fields studied in \cite{guillot-exemplesLN}, from which Lins Neto's  family of foliations of degree two can be studied.  
 
\subsection{A unified picture}\label{tomados} The elliptic curve \(E=\CC/\ZZ[\omega]\) has the order-six transformation   \(\rho_0\colon  u\mapsto -\omega u\). In order to have a  comprehensive picture, we consider the quotients of \(E\) by all the nontrivial subgroups of \(\langle\rho_0\rangle\),  the subgroups \(\langle\rho_0\rangle\), \(\langle\rho_0^2\rangle\) and \(\langle\rho_0^3\rangle\), and the natural relations between them. The action of \(\rho_0\) on \(E\) has a fixed point at \(0\); it has  an orbit of order three, formed by the half-periods \(\frac{1}{2}\), \(\frac{1}{2}\omega\) and \(\frac{1}{2}(1+\omega)\), whose points are stabilized by \(\rho_0^3\); and an orbit of order two, formed by the thirds-of-a-period  \(\tau_0=\frac{1}{3}(1+2\omega)\) and \(-\tau_0\), whose points are stabilized by~\(\rho_0^2\). The stabilizer of all the other points is trivial. It will be useful to consider the quotients of \(E\) under the actions of the subgroups of \(\langle \rho_0\rangle\) as orbifolds. The quotient of \(E\) under the action of \(\rho_0^3\), the elliptic involution \(u\mapsto -u\), is an orbifold, modeled on \(\PP^1\), with four conical points of angle \(\pi\) in equianharmonic position (equivalent to that of the vertices of a regular tetrahedron on the round sphere), that we will denote by  \(O^*(2,2,2,2)\).  The quotient of \(E\) under the action of \(\rho_0^2\) is the orbifold modeled on~\(\PP^1\), with three conical points of angle \(2\pi/3\), that we will denote by \(O(3,3,3)\); and  the quotient of \(E\) under the action of \(\rho_0\) is the orbifold modeled on~\(\PP^1\), with three conical points, of angles \(1/2\), \(1/3\) and \(1/6\) times \(2\pi\), respectively,  that will be denoted by \(O(2,3,6)\). The latter can be seen as the quotient of either one of the former orbifolds under the corresponding automorphism induced by~\(\rho_0\):
\begin{itemize} 
\item The map \(\overline{\rho}_0\colon O^*(2,2,2,2)\to O(2,3,6)\) induced by \(\rho_0\) has degree three and ramifies triply (i) at the point of angle \(\pi\) coming from~\(0\), producing the conic  point of angle \(2\pi/6\), and (ii) at the regular point coming from \(\pm\tau_0\), producing the conic point of angle \(2\pi/3\); the other three conical points of \(O^*(2,2,2,2)\) form an orbit for the action of \(\rho_0\), and map to the point of angle \(\pi\) in the quotient. 
\item  The map \(\overline{\rho}_0\colon O(3,3,3)\to O(2,3,6)\) induced by \(\rho_0\) has degree two, and ramifies (i) at the conical point  coming from \(0\), producing the conical point with angle \(2\pi/6\),  and (ii) at the regular point coming from the orbit of the half-periods, producing the conical point with angle~\(\pi\);  the other two conical points of \(O(3,3,3)\) form an orbit for the action of \(\rho_0\) and map to the point of angle \(2\pi/3\).
\end{itemize}

On \(E\times E\), we have the order-six automorphism \(\rho\) given by the diagonal action of~\(\rho_0\), and the projection onto the first factor \(\pi\colon E\times E\to E\) is equivariant with respect to the actions of \(\rho\) and~\(\rho_0\). The quotients of \(E\times E\) under the actions of the subgroups of \(\langle\rho\rangle\) fiber over the previously described orbifolds, with the quotient of \(E\times E\) under the action of \(\rho^3\) being the Kummer surface \(\mathrm{Kum}(E\times E)\).  The ramified coverings among the orbifolds are associated to ramified coverings between  the associated fibered surfaces;  see Figure~\ref{fig:cube}.
\begin{figure}
\centering	
\begin{tikzcd}%[row sep=1.5em, column sep = 1.5em]
E\times E \arrow[rr,"/\rho^3"] \arrow[dr,"/\rho^2",swap] \arrow[dd,"\pi",swap] &&
\mathrm{Kum}(E\times E) \arrow[dd] \arrow[dr] \\
& (E\times E)/\langle\rho^2\rangle \arrow[rr] \arrow[dd] &&
(E\times E)/\langle\rho\rangle \arrow[dd] \\
E \arrow[rr] \arrow[dr,"/\rho_0^2",swap] && O^*(2,2,2,2) \arrow[dr] \\
& O(3,3,3) \arrow[rr]&& O(2,3,6)
\end{tikzcd}
\caption{The elliptic fibrations over the orbifold bases.}\label{fig:cube}
\end{figure}
For \(i\in\{1,2,3\}\), the action of \(\langle\rho^i\rangle\) on \(E\times E\) has points with non-trivial stabilizers, lying above points of \(E\) with non-trivial stabilizers under the action of \(\rho_0\), which produce singular points in the quotient. After resolving these, the induced map \((E\times E)/\langle\rho^i\rangle \to E/\langle\rho_0^i\rangle\) is an elliptic fibration, with singular fibers above the conical points:  
\begin{itemize} 
	\item  fibers of Kodaira's type \(\mathrm{I}_0^*\) above those of angle \(2\pi/2\);
	\item (blown-up) fibers of Kodaira's type \(\mathrm{IV}\) above those of angle \(2\pi/3\); and 
	\item (blown-up) fibers of Kodaira's type \(\mathrm{II}\) above those of angle \(2\pi/6\)
\end{itemize} (See \cite[Ch.~4, Sect.~3]{brunella-birational}, \cite[Ch.~III, Sect.~10]{BPHV}). The combinatorics of these fibers  are presented in Figure~\ref{sing_fibers}.

\begin{figure}
	\centering	
\includegraphics[width=0.8\textwidth]{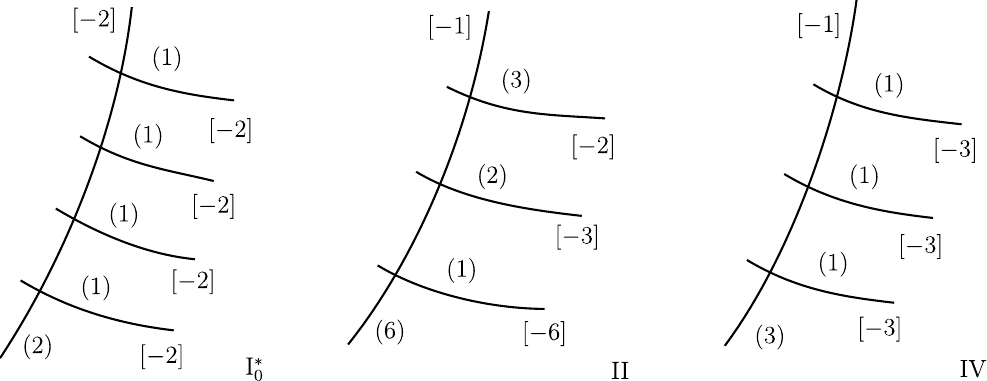}
	\caption{Kodaira's singular  fiber  of type  \(\mathrm{I}_0^*\) and blown-up fibers of type \(\mathrm{II}\) and~\(\mathrm{IV}\). All the irreducible components are rational; their self-intersections appear  in square brackets. The multiplicity with which each component appears in the fiber appears in parenthesis.}\label{sing_fibers}
\end{figure}

The linear foliations on \(E\times E\) induce foliations in all these quotients, with the fibration given by the projection onto the first factor being one of them. The other foliations in the quotients are \emph{turbulent} with respect to the fibration, in the sense that  the foliation is,  in the complement of some fibers (those above the orbifold points, in the present setting),  transverse to the fibration. The global dynamics of such foliations  are given by the dynamics of a group of biholomorphisms of \(E\) (the \emph{monodromy} group), and, for instance, turbulent foliations with infinite monodromy groups cannot have rational first integrals. See~\cite[Ch.~IV, Sect.~3]{brunella-birational} for a  discussion around turbulent foliations.

\section{Birational symmetries} 
As quotients of \(\CC^2\) under the action of a group of affine transformations, the quotients of  \(E\times E\) under consideration have biholomorphisms induced by affine transformations. These biholomorphisms preserve the  foliations induced by the linear ones on \(E\times E\). They  account for all the birational transformations that do so:
 
\begin{proposition}\label{prop:aff-bir} Let \(\Gamma\subset \mathrm{Aff}(\CC^2)\) be a group,  containing a lattice with finite index, that acts on \(\CC^2\) by preserving every foliation by parallel lines. Then, every birational automorphism of \(\Gamma\backslash\CC^2\) preserving the family of linear foliations is holomorphic, and is induced by  an element of \(\mathrm{Aff}(\CC^2)\).
\end{proposition}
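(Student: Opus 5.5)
Let \(X=\Gamma\backslash\CC^2\), let \(\Lambda_0\subset\Gamma\) be a lattice of finite index, which we may take normal, and let \(A=\Lambda_0\backslash\CC^2\), a complex torus. Then \(X=A/G\) with \(G=\Gamma/\Lambda_0\) finite. Let \(f\colon X\dashrightarrow X\) be a birational automorphism preserving the family of linear foliations.

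The plan is to lift \(f\) to \(A\), show the lift is an automorphism of the torus, and read off that \(f\) is induced by an affine map.

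\emph{Step 1: lifting.} Consider the fibre product \(\widetilde A\) of \(A\to X\) with \(f\circ(A\to X)\), and take a component dominating \(A\) through the first projection. Over the complement of the branch locus, the quotient map \(\pi\colon A\to X\) is an unramified Galois \(G\)-cover. The subtle point is that \(f\) need not preserve this branch locus. To handle it, I would use the foliations themselves.

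For a generic \(\lambda\), the linear foliation \(\mathcal{G}_\lambda\) on \(X\) is the image of a foliation on \(A\) by translates of a line, and \(f\) maps \(\mathcal{G}_\lambda\) to some \(\mathcal{G}_{\lambda'}\). The orbifold structure of \(X\) is intrinsically attached to these foliations: \(\pi\) realizes the universal orbifold uniformization in which the foliations become linear with trivial canonical bundle (Kodaira dimension zero, as noted in the introduction). This uses the standard fact from Brunella's classification, in \cite{brunella-birational}, that a foliation of numerical Kodaira dimension zero has an essentially unique such cover. So \(f\) lifts to a bimeromorphic map \(\tilde f\colon A\dashrightarrow A\), possibly after composing with an element of \(G\).

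\emph{Step 2: holomorphy upstairs.} A bimeromorphic self-map of a complex torus is biholomorphic, because tori contain no rational curves: a meromorphic map into a torus extends holomorphically across indeterminacy points, since exceptional curves are rational. Thus \(\tilde f\) is an automorphism of \(A\). It lifts to \(\CC^2\) as an affine map \(z\mapsto Mz+b\), since holomorphic maps between tori lift to affine maps of universal covers. This map normalizes \(\Lambda_0\).

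\emph{Step 3: normalizing \(\Gamma\).} The lift is only defined up to composition with elements of \(G\). For every \(g\in\Gamma\), the maps \(\tilde f g\tilde f^{-1}\) and the elements of \(\Gamma\) all induce the identity on \(X\), so the affine lift normalizes \(\Gamma\). It therefore induces a biholomorphism of \(X\) agreeing with \(f\) on a dense set, and so equal to \(f\). Holomorphy of \(f\) follows.

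\textbf{The main obstacle is Step 1}: showing that \(f\) respects the orbifold cover, i.e.\ that it does not move the branch points to regular points. If the Kodaira-dimension-zero argument is too heavy, I would instead argue directly from the leaves. Generic leaves of \(\mathcal{G}_\lambda\) on \(X\) are images of lines \(\CC\), which are parabolic, with singular points exactly at the orbifold points. Moreover, the holonomy/monodromy description via orbifold points is intrinsic, so \(f\), being a local biholomorphism off a finite set of curves and respecting leaves of all \(\mathcal{G}_\lambda\), must map orbifold points to orbifold points of the same order. The lift then exists by the covering-space criterion for orbifold fundamental groups.
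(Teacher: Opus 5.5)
There is a genuine gap, and it is the one you flagged: Step~1 is asserted, not proved, and it carries all the content of the statement. Your Steps~2 and~3 never use the hypothesis on the linear foliations. Without that hypothesis the conclusion fails in general: a Kummer surface $A/\pm 1$ can carry automorphisms that do not preserve its sixteen exceptional curves, and these are not induced by affine maps. So the lift must come from the foliations, and neither justification you give achieves this.

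\begin{itemize}
\item The Kodaira-dimension-zero argument is not a precise statement you can cite. The index-one cover is attached to a single foliation on a minimal model, whereas $f$ carries $\mathcal G_\lambda$ to a different foliation $\mathcal G_{\lambda'}$. You would still have to show that both canonical covers are $A\to X$ and that $f$ lifts compatibly with both identifications, and none of this is done.
\item The fallback argument is circular. Since $f$ is only birational, it may be undefined at an orbifold point or contract a curve onto one, so ``$f$ maps orbifold points to orbifold points of the same order'' presupposes the holomorphy you are trying to prove. Moreover, the covering-space criterion requires checking that $f_*$ preserves the kernel of the monodromy $\pi_1(X^{\circ})\to G$ over a Zariski-open set, and you never verify this.
\end{itemize}

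The missing idea is to use the hypothesis \emph{locally}, which makes the lifting problem disappear.
\begin{itemize}
\item Choose a point $p$ with trivial stabilizer at which $f$ is a local biholomorphism and such that $f(p)$ also has trivial stabilizer. Let $\tilde f_0$ be a local lift of $f$ near a preimage $\tilde p\in\CC^2$.
\item $\tilde f_0$ sends each family of parallel lines to a family of parallel lines, and any such local biholomorphism is affine. After affine normalizations it preserves the level sets of $x$, $y$ and $x-y$. Hence it has the form $(\phi_1(x),\phi_2(y))$ with $\phi_1'(t)=\phi_2'(t+c)$ for all $t$ and $c$, which forces $\phi_1'=\phi_2'$ to be constant.
\item Thus $\tilde f_0$ is the restriction of a global affine map $F$ of $\CC^2$. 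Then $\pi\circ F=f\circ\pi$ holds on an open set, hence on $\pi^{-1}(\Omega)$ (with $\Omega$ the domain of $f$) by the identity principle.
\end{itemize}
Your Step~3 then applies as written: $FgF^{-1}$ induces the identity on $X$, so it lies in $\Gamma$. This is exactly how the paper concludes. With this local lemma, the passage through the torus $A$ and the absence of rational curves on it become unnecessary.
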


\begin{proof} Let \(f\colon\Gamma\backslash\CC^2\dashrightarrow\Gamma\backslash\CC^2\) be such  a birational map. Let \(\Omega\subset \Gamma\backslash\CC^2\) be the Zariski-open subset where \(f\) is defined and is a local biholomorphism. Let \(U\subset \CC^2\) be the open subset where \(\Gamma\) acts freely. Let \(\pi\colon\CC^2\to \Gamma\backslash\CC^2\) be the quotient map. Let \(\widetilde{p}\in U\) such that \(p=\pi(\widetilde{p})\) is in  \(\Omega\). Let \(q=f(p)\) and let \(\widetilde{q}\in \pi^{-1}(q)\).  Let \(V_0\subset \CC^2\) be a neighborhood of \(\widetilde{p}\) and \(\widetilde{f}_0\colon V_0\to\CC^2\) a local lift of \(f\) mapping \(\widetilde{p}\) to \(\widetilde{q}\).

Since \(\widetilde{f}_0\) preserves the foliations by parallel lines, \emph{it is the restriction to \(V_0\) of an affine map \(F\in\mathrm{Aff}(\CC^2)\).}    In fact, if \(W\) is a connected open subset of \(\CC^2\), and \(\phi\colon W\to\CC^2\) is a biholomorphism onto its image that maps parallel lines to parallel lines, it is an affine map: up to pre-and post-composition by affine maps, we may suppose that it fixes \(0\), and that it preserves the foliations given by the level sets of \(x\), \(y\) and \(x-y\); since it preserves the first two, it has the form 
\(\phi(x,y)=(\phi_1(x),\phi_2(y))\); and since it preserves the third, the image  of  line \(t\mapsto (t,t+c)\), given by \((\phi_1(t),\phi_2(t+c))\), must still be a line with  slope~\(1\), hence \(\phi_1'(t_0)=\phi_2'(c+t_0)\), and we conclude that \(\phi_1'\) and \(\phi_2'\) are constant and equal, that \(\phi\) is a homothety (in particular, an affine map).

Let \(g\in\Gamma\), and let \(\gamma\colon [0,1]\to U\cap \pi^{-1}(\Omega)\) be a path joining \(\widetilde{p}\) to \(g(\widetilde{p})\), so that \(\pi\circ \gamma\) is a closed path. The analytic continuation of \(\widetilde{f}_0\) along \(\gamma\) is given by \(F\). On the one hand, the analytic continuation of \(f\) along \(\pi\circ\gamma\) is given by \(f\) itself, and, on the other, by \(\pi\circ F\). Thus, there exists \(h\in\Gamma\) such that \(gh=Fg\), this is,  \(F\) normalizes \(\Gamma\) within \(\mathrm{Aff}(\CC^2)\). This implies that \(F\) induces a well-defined holomorphic map of \(\Gamma\backslash\CC^2\) that coincides with \(f\) in some open subset, and that must agree everywhere with~it.\end{proof}

\subsection{Affine biholomorphisms of the quotients}\label{affquot} We  now describe the groups of   affine maps that act on the quotients of \(E\times E\) of interest.

\begin{proposition}\label{id:groups} The group of biholomorphisms induced by the group of affine transformations of \(\CC^2\)  on 
	\begin{itemize}
		\item  \((E\times E)/\langle \rho\rangle\), is isomorphic to \(\mathrm{PGL}(2,\ZZ[\omega])\); 
		\item  \((E\times E)/\langle \rho^2\rangle\),   is isomorphic to  \((\mathrm{GL}(2,\ZZ[\omega])/\langle\rho^2\rangle)\ltimes (\ZZ/3\ZZ)^2\),   with the   second factor generated, for the third-of-a-period \(\tau_0=\frac{1}{3}+\frac{2}{3}\omega\), by the translations  of \(\CC^2\) by \((0,\tau_0)\) and \((\tau_0,0)\);
		\item  \((E\times E)/\langle \rho^3\rangle\),   is isomorphic to  \((\mathrm{GL}(2,\ZZ[\omega])/\langle-\mathbf{I}\rangle)\ltimes (\ZZ/2\ZZ)^4\),   with the   second factor given by the translations  by half-periods, generated by the  translations by \((\frac{1}{2},0)\), \((\frac{1}{2}\omega,0)\), \((0,\frac{1}{2})\), and \((0,\frac{1}{2}\omega)\). 
	\end{itemize}
\end{proposition}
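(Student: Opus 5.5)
The plan is to compute, for each subgroup \(G_i=\langle\rho^i\rangle\) (\(i=1,2,3\)), the normalizer of the group \(\Gamma_i\subset\mathrm{Aff}(\CC^2)\) generated by the translations by \(\Lambda^2=\ZZ[\omega]^2\) and by the linear map \(\rho^i\) (scalar multiplication by \(-\omega\), \(\omega\), or \(-1\)). The group of induced biholomorphisms is then \(N(\Gamma_i)/\Gamma_i\). By the argument in the proof of Proposition~\ref{prop:aff-bir}, an affine map \(F(z)=Az+b\) induces a map on \(\Gamma_i\backslash\CC^2\) precisely when it normalizes \(\Gamma_i\), and it induces the identity precisely when \(F\in\Gamma_i\). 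So the whole problem is to compute the normalizer.

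First I determine the linear part. Conjugating the translations \(\Lambda^2\) (the finite-index lattice, which is the unique maximal translation subgroup of \(\Gamma_i\)) by \(F\) must give translations in \(\Gamma_i\), that is, \(A\Lambda^2=\Lambda^2\). Since \(A\) commutes with the scalar \(\rho^i\), this forces \(A\in\mathrm{GL}(2,\ZZ[\omega])\). Conversely, every such \(A\) normalizes \(\Gamma_i\), because \(\rho^i\) is a central scalar. Next I determine the translational part. Conjugating \(\rho^i\) by the translation \(z\mapsto z+b\) gives \(z\mapsto \rho^i z+(1-\rho^i)b\), and this lies in \(\Gamma_i\) if and only if \((1-\zeta)b\in\Lambda^2\), where \(\zeta=-\omega,\omega,-1\). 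A bit more care is needed here: the conjugate must equal \(\rho^i\) composed with some translation in \(\Lambda^2\), since elements of \(\Gamma_i\) with linear part \(\rho^i\) are exactly of that form. So the condition is \(b\in(1-\zeta)^{-1}\Lambda^2\), taken modulo \(\Lambda^2\), and the quotient \((1-\zeta)^{-1}\Lambda^2/\Lambda^2\) is computed from the norm of \(1-\zeta\):
\begin{itemize}
\item for \(\zeta=-\omega\), \(1+\omega=-\omega^2\) is a unit, so no nontrivial translations survive;
\item for \(\zeta=\omega\), \(1-\omega\) has norm \(3\), and \((1-\omega)^{-1}\ZZ[\omega]/\ZZ[\omega]\cong\ZZ/3\ZZ\) is generated by \(\tau_0=\frac13(1+2\omega)\), since \((1-\omega)\tau_0=1+\omega-\ldots\in\ZZ[\omega]\) (a short check);
\item for \(\zeta=-1\), \(2\) has norm \(4\), giving the half-periods \((\ZZ/2\ZZ)^4\).
\end{itemize}

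Hence \(N(\Gamma_i)=\mathrm{GL}(2,\ZZ[\omega])\ltimes (1-\zeta)^{-1}\Lambda^2\). Dividing by \(\Gamma_i=\langle\rho^i\rangle\ltimes\Lambda^2\) gives \((\mathrm{GL}(2,\ZZ[\omega])/\langle\rho^i\rangle)\ltimes\big((1-\zeta)^{-1}\Lambda^2/\Lambda^2\big)\). For \(i=1\), \(\langle\rho\rangle\) consists of all scalar units \(\pm\omega^k\), which form the full group of units of \(\ZZ[\omega]\), so the quotient is \(\mathrm{PGL}(2,\ZZ[\omega])\). For \(i=3\), \(\langle\rho^3\rangle=\langle-\mathbf{I}\rangle\). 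The semidirect product structure follows because the translation quotient is normal and the linear part splits off. I also check that \(\mathrm{GL}(2,\ZZ[\omega])\) preserves \((1-\zeta)^{-1}\Lambda^2\), which it does by linearity.

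The main obstacle I expect is the justification that the map \(N(\Gamma_i)/\Gamma_i\to\mathrm{Bihol}\) is injective and that the linear part must preserve \(\Lambda^2\). For the latter, I will argue that \(\Lambda^2\) is the set of translations in \(\Gamma_i\) and that conjugation by \(F\) maps translations to translations with the vector multiplied by \(A\). For injectivity, an affine map inducing the identity on the quotient agrees locally with some element of \(\Gamma_i\) on an open set, and therefore everywhere.
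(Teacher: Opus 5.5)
Your proposal is correct and is essentially the paper's argument: the paper also identifies the induced group with \(N_{G_i}/G_i\) for \(G_i=\langle\rho^i\rangle\ltimes\ZZ[\omega]^2\), obtains \(A\in\mathrm{GL}(2,\ZZ[\omega])\) by conjugating the translations, and reduces the translational part to the condition \((\mathbf{I}-\rho^i)a\in\ZZ[\omega]^2\), which it solves by explicit arithmetic where you count the solutions via the norm of \(1-\zeta\). One harmless slip is that \(\rho^2\) is multiplication by \(\omega^2\), not \(\omega\); since \(1-\omega^2=-\omega^2(1-\omega)\) and \(\langle\omega\mathbf{I}\rangle=\langle\omega^2\mathbf{I}\rangle\), nothing changes.
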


\begin{proof} Consider  the group \(\mathrm{Aff}(\CC^2)\) of affine transformations of \(\CC^2\), in its semidirect product decomposition \(\mathrm{GL}(2,\CC)\ltimes \CC^2\).  For \(i\in\{1,2,3\}\),	let \(G_i\subset \mathrm{Aff}(\CC^2)\)  be the group \(\langle \rho^i \rangle \ltimes (\ZZ[\omega]\times\ZZ[\omega])\), and let \(N_{G_i}\) be its normalizer. By definition, \((E\times E)/\langle \rho^i \rangle=G_i\backslash \CC^2\). The  group of biholomorphisms of \((E\times E)/\langle \rho^i \rangle\) that come from  \(\mathrm{Aff}(\CC^2)\) is isomorphic to \(N_{G_i}/G_i\).

For \(A\in\mathrm{GL}(2,\CC)\) and \(a\in\CC^2\),  let \((A,a)\) denote the affine transformation \(\zeta \mapsto A\zeta +a\). In the semidirect product factorization of \(\mathrm{Aff}(\CC^2)\), the product reads
\[(A,a)(B,b)=(AB,Ab+a).\]
For \(B\) in the center of \(\mathrm{GL}(2,\CC)\),	
\begin{equation}\label{conjugacy}
	(A,a)(B,b)(A,a)^{-1}=(B,(\mathbf{I}-B)a+Ab).
\end{equation}

If \((A,a)\in N_{G_i}\), and \(b\in \ZZ[\omega]\times \ZZ[\omega]\),   the conjugate of \((\mathbf{I},b)\) by \((A,a)\) must belong to~\(G_i\), and,  from (\ref{conjugacy}), we have that \(Ab\in \ZZ[\omega]\times \ZZ[\omega]\), this is, \(A\in\mathrm{GL}(2,\ZZ[\omega])\). Once this has been established, if \(B\) is a power of \(\rho\), Eq.~(\ref{conjugacy}) gives the second necessary condition \begin{equation}\label{norm.cond}
	(\mathbf{I}-B)a\in \ZZ[\omega]\times \ZZ[\omega].\end{equation} 

For \(G_1\), for \(B=\rho\) and \(a=(a_1,a_2)\), condition (\ref{norm.cond}) is equivalent to
\((1+\omega)a_i=n+m\omega\). By multiplying both sides of this equality by \(-\omega\), we obtain the condition  \(a_i\in\ZZ[\omega]\).  Thus, \(N_{G_1}\) is \(\mathrm{GL}(2,\ZZ[\omega])\ltimes  (\ZZ[\omega]\times \ZZ[\omega])\). This establishes the first case.

For \(G_2\), for \(B=\rho^2\), condition (\ref{norm.cond}) is equivalent to \((1-\omega^2)a_i=n+m\omega\) for some \(n,m\in\ZZ\). From this,
\[3a_i=(1-\omega)(1-\omega^2)a_i=(1-\omega)(n+m\omega)=(n+m)+(2m-n)\omega,\]
and the coefficients \(n+m\) and \(2m-n\) of the last expression represent the most general integers whose sum is in \(3\ZZ\). Thus, \(N_{G_2}\) is  the semidirect product of \(\mathrm{GL}(2,\ZZ[\omega])\) and of the the group of translations 
\[\left\{\textstyle \frac{1}{3}a+\frac{1}{3}b\omega\mid a,b\in\ZZ, a+b\equiv 0\mod 3\right\}.\]	
This establishes the second fact.

For \(G_3\), \(B=-\mathbf{I}\), and condition (\ref{norm.cond}) is equivalent to \(2a\in\ZZ[\omega]\times\ZZ[\omega]\), this is, \(a\) is the most general half-period. 
\end{proof}

By Proposition~\ref{prop:aff-bir}, the groups of birational transformations of \(\PP^2\) preserving the families \(\mathcal{F}^2\) and \(\mathcal{F}^3\) will be associated to the first item in this proposition;   the group  of birational transformations preserving \(\mathcal{F}^4\), to the second; and the group  of birational transformations of the Kummer surface \(\mathrm{Kum}(E\times E)\) that preserve the foliations induced by the linear ones, from the third one.  

In the quotients discussed in the above proposition, \(\mathrm{GL}(2,\ZZ[\omega])\) acts on the family of linear foliations by fractional linear transformations. This action is transitive on the pairwise different triples of  points of \(\PP^1 (\QQ(\omega))\)  (cf.~\cite[\S 4]{bianchi-gruppi}). Thus, all the triples of pairwise different foliations in  \((E\times E)/\langle \rho^i\rangle\)  with slope in \(\QQ(\omega)\) are equivalent to one another under the action of an element of \(\mathrm{GL}(2,\ZZ[\omega])\).  In consequence, we have:

\begin{corollary}\label{coro:trans_rational} For \(j\in\{2,3,4\}\), the group of birational transformations of \(\PP^2\) that preserves the family \(\mathcal{F}^j\) acts transitively upon the triples of pairwise different foliations in \(\{\mathcal{F}^j_\alpha \mid  \alpha\in\QQ(\omega)\}\).
\end{corollary}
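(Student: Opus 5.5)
The plan is to reduce the statement to the action of \(\mathrm{GL}(2,\ZZ[\omega])\) on slopes, and then invoke the transitivity of that action on triples of pairwise different points of \(\PP^1(\QQ(\omega))\), as stated just before the corollary with reference to \cite[\S 4]{bianchi-gruppi}.

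First, I identify the families with linear foliations. For \(j=4\), Theorem~\ref{thm:ln-desc} gives a birational map \(\Phi\colon(E\times E)/\langle\rho^2\rangle\dashrightarrow\PP^2\) sending the foliation of \(\indel{u}+\lambda\indel{v}\) to \(\mathcal{F}^4_\alpha\), where \(\alpha=(1+2\omega)\lambda+1\). For \(j=3\), Proposition~\ref{prop-rel34} gives the analogous map from \((E\times E)/\langle\rho\rangle\). For \(j=2\), I compose with the birational map induced by \(\Xi\) of Section~\ref{sec:deg2}, which sends \(\mathcal{F}^3_\alpha\) to \(\mathcal{F}^2_\alpha\). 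In every case \(\alpha\) is an affine function of \(\lambda\) with coefficients in \(\QQ(\omega)\), extended by \(\infty\mapsto\infty\). It is therefore a bijection of \(\PP^1(\QQ(\omega))\) onto itself. So pairwise different triples of parameters \(\alpha\in\QQ(\omega)\) correspond to pairwise different triples of slopes \(\lambda\in\PP^1(\QQ(\omega))\). (Here the slope \(\infty\), corresponding to \(\indel{v}\), is allowed, since \(\alpha=\infty\) is.)

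Second, I produce the symmetries. By Proposition~\ref{id:groups}, every \(A\in\mathrm{GL}(2,\ZZ[\omega])\) normalizes \(G_i\). It therefore induces a biholomorphism of \((E\times E)/\langle\rho^i\rangle\). Conjugating by the birational identifications of the first step gives a birational transformation of \(\PP^2\). Since \(A\) is linear, it maps the line field of \(\indel{u}+\lambda\indel{v}\) to the line field with slope
\[
A\cdot\lambda=\frac{c+d\lambda}{a+b\lambda},\qquad A=\begin{pmatrix}a&b\\ c&d\end{pmatrix}.
\]
The induced transformation therefore preserves the family \(\mathcal{F}^j\), and it acts on the parameters \(\lambda\) by a fractional linear transformation.

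Third, I combine these. Given two triples of pairwise different foliations with parameters in \(\QQ(\omega)\), I pass to their slope triples. I pick \(A\in\mathrm{GL}(2,\ZZ[\omega])\) carrying the first slope triple to the second, which is exactly the cited transitivity statement. The birational transformation induced by \(A\) then carries the first triple of foliations to the second.

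The main obstacle is this transitivity input, since everything else is formal from Theorem~\ref{thm:ln-desc}, Proposition~\ref{prop-rel34} and Proposition~\ref{id:groups}. I would take it as given from \cite{bianchi-gruppi}, as the paper does, and not reprove it here. Two checks remain for that step. The first is that the slope correspondence \(\lambda\leftrightarrow\alpha\) respects \(\QQ(\omega)\)-rationality and distinctness; this holds because it is affine over \(\QQ(\omega)\). The second is that passing through \(\Xi\) for \(j=2\) preserves the identification of the whole family, which holds because \(\Xi\) maps \(\mathcal{F}^3_\alpha\) to \(\mathcal{F}^2_\alpha\) for every \(\alpha\).
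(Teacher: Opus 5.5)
Your argument is the same as the paper's own justification: pass to slopes, let \(\mathrm{GL}(2,\ZZ[\omega])\) act by fractional linear transformations, and quote transitivity on triples of distinct points of \(\PP^1(\QQ(\omega))\) from \cite{bianchi-gruppi}. It breaks at exactly the step you flagged. That transitivity statement is false, and so is the corollary as worded.

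Write a slope \(\lambda=p/q\) in lowest terms, and let \(v=(q,p)\in\ZZ[\omega]^2\) be the corresponding primitive direction vector; it is well defined up to one of the six units. A matrix \(A\in\mathrm{GL}(2,\ZZ[\omega])\) sends primitive vectors to primitive vectors and multiplies determinants by the unit \(\det A\). Hence \(\delta(\lambda_1,\lambda_2)=N(\det(v_1,v_2))=N(q_1p_2-p_1q_2)\) is an invariant of unordered pairs of distinct slopes. Geometrically, it is the intersection number in \(E\times E\) of closed leaves of slopes \(\lambda_1\) and \(\lambda_2\); for example \(\delta(p/q,\infty)=N(q)\), which is the count made in the proof of Theorem~\ref{thm:nodes}.

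By Propositions~\ref{prop:aff-bir} and~\ref{id:groups}, every birational symmetry of \(\mathcal{F}^j\) acts on slopes through such a matrix. So the multiset of the three values of \(\delta\) on a triple is invariant under the whole group. For the slopes \(\{0,1,\frac{1}{2}\}\) this multiset is \(\{1,1,1\}\). For \(\{0,1,2\}\) it is \(\{1,1,4\}\), because \(\delta(0,2)=N(2)=4\). Through \(\alpha=(1+2\omega)\lambda+1\), the triples \(\{\mathcal{F}^j_1,\mathcal{F}^j_{2+2\omega},\mathcal{F}^j_{(3+2\omega)/2}\}\) and \(\{\mathcal{F}^j_1,\mathcal{F}^j_{2+2\omega},\mathcal{F}^j_{3+4\omega}\}\) therefore lie in different orbits, for each \(j\in\{2,3,4\}\). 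In fact the action is not even transitive on pairs. The heuristic that fails is the sharp \(3\)-transitivity of \(\mathrm{PGL}(2,\CC)\) on \(\PP^1(\CC)\), which does not survive the restriction to matrices with entries in \(\ZZ[\omega]\).

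What your first two steps do prove, once the third is replaced, is transitivity on single foliations. Since \(\ZZ[\omega]\) is a principal ideal domain, every primitive vector \((q,p)\) can be sent to \((0,1)\) by an element of \(\mathrm{SL}(2,\ZZ[\omega])\); this is the construction of Section~\ref{sec:algo}. Hence every \(\mathcal{F}^j_\alpha\) with \(\alpha\in\QQ(\omega)\) is carried to \(\mathcal{F}^j_\infty\) by a symmetry of the family. This is the only consequence of the corollary the paper uses later, and it is the form in which the statement should be made and proved.
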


In particular,  from this, and from the description of Section~\ref{tomados}, for every \(\alpha\in\QQ(\omega)\), the foliation \(\mathcal{F}_\alpha^4\) is birationally equivalent to \(\mathcal{F}_\infty^4\), an elliptic fibration with three fibers of Kodaira type~\(\mathrm{IV}\) (cf. \cite[Cor.~1.4]{linglu}).

\subsection{Generators for \texorpdfstring{\(\mathrm{GL}(2,\ZZ[\omega])\)}{GL(2,Z[w])} } We begin by recalling   some facts about  the ring \(\ZZ[\omega]\) of Eisenstein integers. It is  \emph{Euclidean}: it has the \emph{norm}  \(N(a+\omega b)=a^2+b^2-ab\) (the square of its complex norm), for which \(N(ab)\geq N(b)\) for nonzero \(a\) and \(b\). It has a division: for \(a\) and \(b\) in \(\ZZ[\omega]\), \(a\neq 0\), there exist \(q\) and \(r\) in  \(\ZZ[\omega]\) with \(N(r) < N(a)\), such that \(b=qa+r\) (\(q\) and \(r\) are not unique, but there are only finitely many choices for them). There is a Euclidean algorithm for calculating the greatest common divisor of two of its elements.

The following result, with its many variations, is classical (see~\cite[\S 1, par.~II]{bianchi-zahlen}; see also \cite[Section~4.4.2]{fine}):

\begin{proposition}\label{gl2gen} The group \(\mathrm{GL}(2,\ZZ[\omega])\) is generated by its elements
\begin{align} \label{alleged}
	B_0 & =\left(\begin{array}{rr} 0 & 1 \\ -1 & 0\end{array}\right), & S_0 & =\left(\begin{array}{rr} -\omega & 0 \\ 0 & 1\end{array}\right), &  U_0 & =\left(\begin{array}{cc} 1 & 1 \\ 0 & 1\end{array}\right).\end{align}
\end{proposition}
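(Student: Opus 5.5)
The plan is the classical Euclidean reduction: take an arbitrary \(A\in\mathrm{GL}(2,\ZZ[\omega])\) and bring it to the identity by multiplying on the left by elements of the subgroup \(G=\langle B_0,S_0,U_0\rangle\). This is the argument behind \(\mathrm{SL}(2,\ZZ)=\langle S,T\rangle\), run with the Euclidean division of \(\ZZ[\omega]\) recalled above.

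First, I enlarge the supply of elements known to lie in \(G\).
\begin{itemize}
\item The powers of \(S_0\) give \(\mathrm{diag}(u,1)\) for every unit \(u\in\{\pm1,\pm\omega,\pm\omega^2\}\), since \(-\omega\) generates the unit group of order six.
\item Conjugating by \(B_0\) gives \(B_0\,\mathrm{diag}(u,1)B_0^{-1}=\mathrm{diag}(1,u)\). Hence every diagonal matrix with unit entries lies in \(G\).
\item Conjugating \(U_0\) by \(\mathrm{diag}(u,1)\) gives \(\begin{pmatrix}1&u\\0&1\end{pmatrix}\) for every unit \(u\).
\item In particular \(\begin{pmatrix}1&\omega\\0&1\end{pmatrix}\in G\). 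Together with \(U_0\), products of powers of these give all upper unipotent matrices \(\begin{pmatrix}1&c\\0&1\end{pmatrix}\), \(c\in\ZZ[\omega]\).
\end{itemize}

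The main step is the reduction. Write the first column of \(A\) as \((a,c)^T\). If \(c\neq0\) and \(N(a)\geq N(c)\), divide to get \(a=qc+r\) with \(N(r)<N(c)\). Left multiplication by \(\begin{pmatrix}1&-q\\0&1\end{pmatrix}\) replaces \(a\) by \(r\). Left multiplication by \(B_0\) then swaps the two entries up to sign, so the first column becomes \((c,-r)^T\).

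The quantity \(N(\text{lower entry})\) is a nonnegative integer that strictly decreases at each step, so after finitely many steps the first column is \((a',0)^T\). Since the determinant is a unit, \(a'\) must be a unit.

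The matrix now has the form \(\begin{pmatrix}a'&b'\\0&d'\end{pmatrix}\) with \(a',d'\) units. Multiply on the left by \(\mathrm{diag}(a'^{-1},d'^{-1})\in G\), and then by a unipotent matrix to kill the upper-right entry. This gives the identity, so \(A\in G\).

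The only point that needs care, and the expected obstacle, is producing the unipotent matrices with arbitrary Eisenstein entries; the argument is otherwise routine. This is handled by the conjugation \(\mathrm{diag}(u,1)\,U_0\,\mathrm{diag}(u,1)^{-1}=\begin{pmatrix}1&u\\0&1\end{pmatrix}\) together with the additivity \(\begin{pmatrix}1&x\\0&1\end{pmatrix}\begin{pmatrix}1&y\\0&1\end{pmatrix}=\begin{pmatrix}1&x+y\\0&1\end{pmatrix}\), since \(1\) and \(\omega\) generate \(\ZZ[\omega]\) additively. Everything else only uses the strict decrease of the norm under Euclidean division.
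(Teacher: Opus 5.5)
Your proof is correct and follows essentially the same route as the paper. Both arguments first obtain all diagonal matrices with unit entries and all unipotent triangular matrices from $B_0$, $S_0$, $U_0$ by conjugation, and then run the Euclidean algorithm of $\ZZ[\omega]$, interleaved with $B_0$, until a triangular matrix with unit diagonal remains.

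The only difference is cosmetic. You use row operations (left multiplication) on the first column and finish with upper unipotents, whereas the paper uses column operations (right multiplication) on the first row and therefore also needs the lower unipotents obtained by conjugating with $B_0$. Your restriction $N(a)\geq N(c)$ is superfluous, since a division with $N(r)<N(c)$ exists for every $a$.
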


\begin{proof} The idea is that any matrix in \(\mathrm{GL}(2,\ZZ[\omega])\) can be transformed into the identity one through elementary column operations (exchanging columns, adding one column to another, multiplying a column by a unit of \(\ZZ[\omega]\)), and that, thanks to the Euclidean algorithm in \(\ZZ[\omega]\), these operations can be realized by successively multiplying a given matrix to the right by products of the above matrices.

We begin by observing that the matrices
\begin{equation} \label{unipotent} S_0 U_0^{-n}S_0^{-1}U_0^m=\left(\begin{array}{cc} 1 & m+n\omega \\ 0 & 1 \end{array}\right)\end{equation}
are all those of the form  \(\left(\begin{array}{cc} 1 & s \\ 0 & 1 \end{array}\right)\) with \(s\in\ZZ[\omega]\), and that
\begin{equation}\label{unip_low}
		B_0\left(\begin{array}{cc} 1 & s \\ 0 & 1 \end{array}\right)B_0^{-1}=\left(\begin{array}{rr} 1 & 0 \\ -s & 1 \end{array}\right);\end{equation}
we also observe that
\(B_0S_0B_0^{-1}=\left(\begin{array}{rr} 1 & 0 \\ 0 & -\omega \end{array}\right)\),
and that the products of powers of this matrix (of order six)  with powers of \(S_0\) (which is also of order six) give  the most general diagonal matrix in \(\mathrm{GL}(2,\ZZ[\omega])\).
	
Let \(A=\left(\begin{array}{cc} a & b \\ c & d \end{array}\right)\in\mathrm{GL}(2,\ZZ[\omega])\). We will show that, by successively multiplying to the right by the matrices of Eq.~(\ref{alleged}), we may transform \(A\) into the identity matrix.
\begin{enumerate}
	\item \label{case:b0}  If \(b=0\),
		\(A=\left(\begin{array}{cc} (-\omega)^{i} & 0 \\ c & (-\omega)^{j} \end{array}\right)\) for some \(i\) and \(j\). Up to multiplication to the right by a suitable diagonal matrix, we get a matrix of the form (\ref{unip_low}), which, as we have seen, may be written as a product of the matrices~(\ref{alleged}).
	\item  If \(b\neq 0\) and \(a=0\), then since \(AB_0=\left(\begin{array}{cc} -b & a \\ -d & c \end{array}\right)\), multiplying \(A\) to the right by \(B_0\) takes us to the previous case.
	\item \label{case:other}
		In the remaining case,  \(ab\neq 0\),  up to multiplying to the right by \(B_0\), we may suppose that  \(N(a)\leq N(b)\). By the division algorithm, there exist \(q\) and \(r\) in \(\ZZ[\omega]\), with  \(N(r)<N(a)\) such that \(b=qa+r\). By multiplying to the right by the  matrix   (\ref{unipotent}) associated to \(-q\),
		\[A\left(\begin{array}{rr} 1 & -q \\ 0 & 1 \end{array}\right)=\left(\begin{array}{cc} a & b-aq \\ c  & d-cq  \end{array}\right)=\left(\begin{array}{cc} a & r\\ c & d-cq \end{array}\right).\]
		If \(r=0\), we are back in  case (\ref{case:b0}). Otherwise, we are still in case (\ref{case:other}), but  the sum of the norms of the elements of the first row has decreased.
	\end{enumerate}
	Iterating this procedure gives the result.
\end{proof}	

Bianchi gave a fundamental polyhedron for the action of \(\mathrm{PSL}(2,\ZZ[\omega])\) on the three-dimen\-sion\-al hyperbolic space \(\HH^3\) \cite[\S 13]{bianchi-gruppi}, from which a presentation of this group may be deduced; an explicit one is given in \cite[Sect.~4.4.2]{fine} (see also \cite{Johnson-Ivic}). Presentations for \(\mathrm{GL}(2,\ZZ[\omega])\) and \(\mathrm{SL}(2,\ZZ[\omega])\) appear in~\cite[Sect.~6]{Swan}.

On \(E\times E\), the foliation  of slope \(\lambda\in\PP^1\), the one generated by \(\indel{u}+\lambda \indel{v}\) for \(\lambda\in\CC\), and by \(\indel{v}\) for \(\lambda=\infty\), induces foliations on all the birational models of the quotients of \(E\times E\) here discussed; the same parameter is affected to all of them, and will be in all cases referred to as its \emph{slope}. Every element of \(\mathrm{GL}(2,\ZZ[\omega])\) will act in every such birational model  preserving the family of foliations,  acting upon this parameter. In all quotients and birational models, the action on the slopes of the foliations will be the same. The actions of the generators of Proposition~\ref{gl2gen} on the foliations, in terms of the slope \(\lambda\), are given by
\begin{align*} B(\lambda) & =-\frac{1}{\lambda}, & S(\lambda) & =-\omega^2\lambda, &  U(\lambda) & =\frac{\lambda}{\lambda+1}.\end{align*}
Another parameter for the foliations is Lins Neto's parameter \(\alpha\), related to the slope by  relation~(\ref{lambda_to_alpha}). In terms of this parameter,  the above transformations read
\begin{align} \label{act.lins.par}
B^\dagger(\alpha) & =\frac{\alpha+2}{\alpha-1}, &  S^\dagger(\alpha) & =-\omega^2\alpha -\omega, &  U^\dagger(\alpha) & =-\frac{2\omega^2\alpha+1}{\alpha+2\omega}.\end{align}

These transformations are the ones induced by the natural linear action of \(\mathrm{GL}(2,\ZZ[\omega])\) on constant vector fields on \(\CC^2\), and this action will be consistent throughout the quotients and birational models as long as we consider the action on the foliations induced by the action on the vector fields that generate them. When considering foliations defined by forms, in order to keep this consistence,  the action by  pullback of the \emph{inverses} of these transformations on the forms must be considered.

\subsection{Birational symmetries of \texorpdfstring{\(\mathcal{F}^4\)}{F4}} By Proposition~\ref{id:groups},  \(\mathrm{GL}(2,\ZZ[\omega])\ltimes (\ZZ/3\ZZ)^2\) 
acts  on \((E\times E)/\langle \rho^2\rangle\) by biholomorphisms that preserve the foliations induced by the linear foliations on \(E\times E\). By Proposition~\ref{prop:aff-bir}, these are all the birational transformations that do so. In this way, the  birational map \(\Phi\) of Theorem~\ref{thm:ln-desc} induces a group homomorphism \(\Phi_*\colon \mathrm{GL}(2,\ZZ[\omega])\times (\ZZ/3\ZZ)^2 \to \mathrm{Bir}(\PP^2)\), whose image is given by the birational transformations of \(\PP^2\) that preserve the family of foliations~\(\mathcal{F}_4\).  The  proof of Theorem~\ref{thm:bir-f4}   will take place in the setting of Section~\ref{sec:deg4}, and is  a higher-dimensional generalization of the procedure from which explicit formulas for Lattès maps of \(\PP^1\) are obtained (cf. \cite[Ch.~7]{milnor-dynamics}). Taking into account the structure of  the group of affine biholomorphisms of \((E\times E)/\langle\rho^2\rangle\) described in Proposition~\ref{id:groups}, and the generating set  of \(\mathrm{GL}(2,\ZZ[\omega])\) of Proposition~\ref{gl2gen},  we will  study the action of  generators of the group of Proposition~\ref{id:groups} on the generators of the field of rational functions of \(\PP^2\) of Lemma~\ref{lemma:ratfun} via the birational map \(\Phi\) of Theorem~\ref{thm:ln-desc}.

For  the linear map \(B_0\) of Proposition~\ref{gl2gen}, let \(B_4=\Phi_*(B_0)\) be the corresponding birational transformation of \(\PP^2\). By the identification in Section~\ref{sec:deg4} between the rational functions of \(\PP^2\) considered in Lemma~\ref{lemma:ratfun}  (as functions on \(\Sigma_4\)) and  functions on \(E\times E\), and using the properties of the Weierstrass functions recalled in Section~\ref{sec:weiers},  we have
\begin{align*}	
B_4^*\left(\frac{p_0}{p_1}\right) & =B_0^*\left(\frac{\wp(u)}{\wp(v)}\right)=\frac{\wp(v)}{\wp(-u)}=\frac{\wp(v)}{\wp(u)}=\frac{p_1}{p_0},\\
B_4^*\left(\frac{q_0}{H}\right) & =B_0^* (\wp'(u) ) = \wp'(v) =\frac{q_1}{H},\\
B_4^*\left(\frac{q_1}{H}\right) & =B_0^* ( \wp'(v)  )= \wp'(-u) =- \wp'(u) =-\frac{q_0}{H}.
\end{align*}
By Lemma \ref{lemma:ratfun}, these conditions determine \(B_4\). If \(B_4(x:y:z)=(P:Q:R)\), with \(P\), \(Q\) and \(R\) homogeneous polynomials in \(x\), \(y\) and \(z\) of the same degree, these three equations give, with the expressions of Lemma~\ref{lemma:ratfun}, explicit equations for \((P:Q:R)\) in terms of \((x:y:z)\). These are, respectively,
\begin{align*}
& \frac{(P-R)(Q-R)}{P^2+PQ+Q^2} 
		=\frac{x^2+xy+y^2}{(x-z)(y-z)},\\
& \frac{P^3+Q^3-2R^3}{P^3-Q^3}
		=-(1+2\omega)\frac{(x^2y+x^2z+xy^2+xz^2+y^2z+yz^2)}{(x-y)(y-z)(z-x)}, \\ 
& (1+2\omega)\frac{(P^2Q+P^2R+PQ^2+PR^2+Q^2R+QR^2)}{(P-Q)(Q-R)(R-P)}
		=\frac{x^3+y^3-2z^3}{x^3-y^3}.
\end{align*}
They can be considered as   a system of polynomial equations in \(P/R\) and \(Q/R\) (one quadratic, two cubic). Furthermore,  by Lemma~\ref{lemma:ratfun}, there is one and only one solution to it. We have solved this system with the help of the computer, obtaining the stated expression for~\(B_4\).  Observe that, by the same Lemma, in order to establish the result, it is sufficient to verify that the rational function \(B_4\) in the statement   acts upon the functions of the Lemma in the above way (although this does not say how to obtain the expression for \(B_4\), it is a simpler calculation, that can be done by hand).

The other transformations are obtained in a similar way. The birational transformation  \(S_4=\Phi_*(S_0)\)  is determined by the conditions 
\begin{align*}	S_4^*\left(\frac{p_0}{p_1}\right) & =S_0^*\left(\frac{\wp(u)}{\wp(v)}\right)=\frac{\wp(-\omega u)}{\wp(v)}=\omega\frac{\wp( u)}{\wp(v)}=\omega\frac{p_0}{p_1},\\
S_4^*\left(\frac{q_0}{H}\right) & =S_0^*(\wp'(u))=\wp'(-\omega u)=-\wp'(u)=-\frac{q_0}{H},\\
S_4^*\left(\frac{q_1}{H}\right)& =S_0^*( \wp'(v) )=\wp'(v)=\frac{q_1}{H},
\end{align*}
while the birational transformation  \(U_4\) associated to \(U_0\) is determined by 
\begin{align*}
U_4^*\left(\frac{p_0}{p_1}\right) & =U_0^*\left(\frac{\wp(u)}{\wp(v)}\right)=\frac{\wp(u+v)}{\wp(v)}=\frac{1}{4\wp(v)}\left(\frac{\wp'(u)-\wp'(v)}{\wp(u)-\wp(v)}\right)^2-\frac{\wp(u)}{\wp(v)}-1 \\& =\frac{1}{4p_1}\left(\frac{q_0-q_1}{p_0-p_1}\right)^2-\frac{p_0}{p_1}-1, \\
U_4^*\left(\frac{q_0}{H}\right) & =U_0^* (\wp'(u)) = \wp'(u+v) \\  & =\frac{\wp'(u)\wp'(v)(\wp'(u)-\wp'(v))}{(\wp(u)-\wp(v))^3}-3\frac{\wp^2(u)\wp'(v)+\wp^2(v)\wp'(u)}{(\wp(u)-\wp(v))^2} \\ & =\frac{q_0q_1(q_0-q_1)}{H(p_0-p_1)^3}-3\frac{p_0^2q_1+p_1^2q_0}{H(p_0-p_1)^2}, \\ 
U_4^*\left(\frac{q_1}{H}\right) & =U_0^* (\wp'(v) )= \wp'(v) =\frac{q_1}{H}.	
\end{align*}
Through these explicit conditions, the transformations \(S_4\) and \(U_4\) can be obtained in the same way as before. These three transformations generate the action of the factor \(\mathrm{GL}(2,\ZZ[\omega])\) in the semidirect product \(\mathrm{GL}(2,\ZZ[\omega])\ltimes (\ZZ/3\ZZ)^2\).

Let us now discuss generators for the second  factor. For \(\zeta_0\in\CC\) such that
\(\wp(\zeta_0)=0\)  and \(\wp'(\zeta_0)=H\),  from  the relations (\ref{weies-sum}--\ref{weiesp-sum})  and the condition \( g=-H^2\),  the transformation \(L_0(u,v)=(u+\zeta_0,v)\) of \(\CC^2\) gives one of the order-three translations in the second item of Proposition~\ref{id:groups}. For \(L_4=\Phi_*(L_0)\),  we have 
\begin{align*}
L_4^*\left(\frac{p_0}{p_1}\right) & =L_0^*\left(\frac{\wp(u)}{\wp(v)}\right)=\frac{\wp(u+\zeta_0)}{\wp(v)}=-\frac{ g+\wp'(u)\wp'(\zeta_0)}{2\wp^2(u)\wp(v)}=\frac{H(H-q_0)}{2p_0^2p_1},\\
L_4^*\left(\frac{q_0}{H}\right)& = L_0^* (\wp'(u))  = \wp'(u+\zeta_0) = \wp'(\zeta_0) +\frac{ g(\wp'(u)-\wp'(\zeta_0))}{\wp^3(u)}  
 =1-\frac{H(q_0-H)}{p_0^3},\\
L_4^*\left(\frac{q_1}{H}\right) & =L_0^*  (\wp'(v))  = \wp'(v) = \frac{q_1}{H}.
\end{align*}
These explicit conditions determine \(L_4\), and the expression in the statement may be obtained from them in the same way as before. Since the factor \((\ZZ/3\ZZ)^2\) in the semidirect product is the normal one, the conjugate \(L_4'\) of \(L_4\) of Eq.~(\ref{bir:e_conj}) belongs to it. Since \(L_4'\) does not belong to the cyclic group generated by \(L_4\), it generates, with the latter, the factor  \((\ZZ/3\ZZ)^2\). This finishes the proof of Theorem~\ref{thm:bir-f4}.

\begin{proposition}
\label{prop:non-effective} Within the group of birational transformations of the plane preserving the family~\(\mathcal{F}^4\), the subgroup of those that act by mapping each foliation  to itself is the group of linear transformations \(\langle J \rangle\ltimes(\ZZ/3\ZZ)^2\), of order~18, generated by the involution \(J\) of Eq.~(\ref{sym2}), and  by the group generated by \(L_4\) and \(L_4'\). 
\end{proposition}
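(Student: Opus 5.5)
The plan is to work entirely on the side of \(E\times E\) and transport the answer through \(\Phi\). By Proposition~\ref{prop:aff-bir} and Proposition~\ref{id:groups}, every birational transformation of \(\PP^2\) preserving the family \(\mathcal{F}^4\) has the form \(\Phi_*((A,a))\), where \((A,a)\in N_{G_2}\), \(A\in\mathrm{GL}(2,\ZZ[\omega])\), and \(a\) is one of the order-three translations. Since \(\Phi\) is birational, \(\Phi_*\) is injective on \(N_{G_2}/G_2\). So I will determine the subgroup \(K\subset N_{G_2}/G_2\) of classes that fix every slope, and then identify \(\Phi_*(K)\).

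\emph{Step 1: the kernel on slopes.} The affine map \((A,a)\) sends the constant vector field \(\indel{u}+\lambda\indel{v}\) to \(A(1,\lambda)^T\). Hence its action on slopes is the fractional linear transformation attached to \(A\), and it is independent of \(a\). This transformation is the identity on all of \(\PP^1\) if and only if \(A\) is scalar. Since \(A\in\mathrm{GL}(2,\ZZ[\omega])\), the scalar must be a unit of \(\ZZ[\omega]\), so \(A=(-\omega)^k\mathbf{I}\), that is, \(A\in\langle\rho\rangle\). Therefore \(K=(\langle\rho\rangle/\langle\rho^2\rangle)\ltimes(\ZZ/3\ZZ)^2\), where the second factor is generated by the translations by \((\tau_0,0)\) and \((0,\tau_0)\). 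It suffices to note that all slopes in \(\PP^1(\QQ(\omega))\) are fixed, since these are Zariski dense.

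\emph{Step 2: structure and order.} The quotient \(\langle\rho\rangle/\langle\rho^2\rangle\) has order two and is represented by \(\rho^3=-\mathbf{I}\) (or by \(\rho\)). By formula~(\ref{conjugacy}), conjugation by \(-\mathbf{I}\) sends the translation by \(b\) to the translation by \(-b\). This inverts \((\ZZ/3\ZZ)^2\), which is a nontrivial action, so the product is genuinely semidirect and has order \(2\cdot 9=18\).

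\emph{Step 3: identification in \(\PP^2\).} Proposition~\ref{prop-rel34} states that \(\Phi\) is equivariant with respect to \(\rho\) on \((E\times E)/\langle\rho^2\rangle\) and the linear involution \(J\) on \(\PP^2\). Hence \(\Phi_*(\rho)=J\). Theorem~\ref{thm:bir-f4} shows that the translations are sent to the group generated by \(L_4\) and \(L_4'\), which consists of linear maps. Consequently \(\Phi_*(K)=\langle J\rangle\ltimes\langle L_4,L_4'\rangle\) is a group of linear transformations of order~\(18\).

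The only delicate point is Step~1. One has to make sure that the action on the parameter of the family really is the projective action of \(A\) alone, and that no element with non-scalar \(A\) can fix every foliation. This follows because a Möbius transformation fixing three points is the identity. It is also harmless that foliations are only defined up to the identification of slopes, since the scalar ambiguity in \((1,\lambda)\) disappears projectively.
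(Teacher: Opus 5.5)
Your proposal is correct and follows essentially the same route as the paper's proof. Translations fix every slope, so the linear part must fix every slope; it is then scalar, hence a unit times \(\mathbf{I}\), i.e.\ \(\rho\) modulo \(\rho^2\). This corresponds to \(J\) by Proposition~\ref{prop-rel34}, and the translations correspond to \(L_4\) and \(L_4'\) by Theorem~\ref{thm:bir-f4}. Your extra checks (injectivity of \(\Phi_*\), the M\"obius-transformation argument, and the inversion of \((\ZZ/3\ZZ)^2\) by \(-\mathbf{I}\) giving the semidirect structure and order~18) make explicit what the paper leaves implicit, including that the paper's ``diagonal element'' should be read as ``scalar matrix''.
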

\begin{proof} The translations of \(\CC^2\) act on the foliations by parallel lines by preserving each foliation individually, and thus the translational part of the group of the second item of Proposition~\ref{id:groups}, the normal factor, isomorphic to \((\ZZ/3\ZZ)^2\), acts on \((E\times E)/\langle\rho\rangle\) by preserving the linear foliations. Thus, the group that preserves every foliation contains the factor \((\ZZ/3\ZZ)^2\) and is necessarily a semidirect product with it.  If the  transformation of \((E\times E)/\langle\rho\rangle\) coming from the affine map \((A,b)\) preserves all the linear foliations individually, then \(b\) is one of these translations, and, on its own, the linear part \((A,0)\) preserves all the foliations as well. If a linear transformation induces a nontrivial  transformation of \((E\times E)/\langle\rho\rangle\) that preserves every foliation, it is a diagonal element of \(\mathrm{GL}(2,\ZZ[\omega])\), and is thus equal to \(\rho\), up to a power of~\(\rho^2\). From the proof of Proposition~\ref{prop-rel34}, the birational transformation of \(\PP^2\) associated to \(\rho\) is \(J\), and, from the previous theorem, \(L_4\) and \(L_4'\) correspond to the translations generating the factor  \((\ZZ/3\ZZ)^2\). All the transformations in the group are linear.  
\end{proof}

By the semidirect product structure of the group of this proposition, \(J\) acts by conjugation on the factor \((\ZZ/3\ZZ)^2\). Since \(JL_4J=L_4^2\) and \(JL_4'J=(L_4')^2\), this involutive automorphism of \((\ZZ/3\ZZ)^2\) maps each element to its inverse (the group of the previous proposition is a generalized dihedral group). Thus, the nine elements which are not in \((\ZZ/3\ZZ)^2\) are all involutions, and, by Sylow's second theorem, are all conjugate to one of them, say \(J\), by elements of the group generated by \(L_4\) and~\(L_4'\).

\begin{remark} The birational automorphism of \(\PP^2\) associated to a hyperbolic element of \(\mathrm{GL}(2,\ZZ[\omega])\) preserves two of the foliations in the family of degree four, images of the stable and unstable foliations of the associated Anosov transformation on \(E\times E\). Such birational transformations take a special place among the  birational groups of transformations preserving a foliation~\cite[Thm.~3.1]{cantat-favre}. Explicit examples of such birational automorphisms of \(\PP^2\) may be constructed via Theorem~\ref{thm:bir-f4}.\end{remark}

\subsection{Linear symmetries of the dual Hesse configuration}

The group of linear symmetries of the Hesse configuration  is isomorphic to    \(\mathrm{SL}(2,\ZZ/3\ZZ)\ltimes (\ZZ/3\ZZ)^2\), and has  order~\(216\) (see~\cite[Sect.~7.3, Props.~7 and~8]{PAC}). This is also the group of linear symmetries  of the dual configuration, the one of interest here. The group of linear symmetries of the dual Hesse configuration is exactly the group of linear transformations of \(\PP^2\) that preserve  the family \(\mathcal{F}^4\). With respect to the structure of the birational group of Theorem~\ref{thm:bir-f4}, the subgroup formed by the linear transformations has naturally the structure of a semidirect product, as the normal factor \((\ZZ/3\ZZ)^2\) is generated by the linear transformations \(L_4\) and~\(L_4'\). In the other factor, image of \(\mathrm{GL}(2,\ZZ[\omega])\), we have the linear transformation \(B_4\), whose square is the transformation \(J\) of Eq.~(\ref{sym2}), and the transformation of order three
\begin{equation}
\label{T4}
T_4(x:y:z)  = (x:y: \omega z).\end{equation}
It is equal to \(\Phi_*(T_0)\) for 
\begin{equation}\label{T0} T_0=\left(\begin{array}{cc} -\omega & 0 \\  1 & -\omega^2 \end{array}\right)  = B_0 S_0 U_0 S_0^{-1}U_0^{-1} S_0^{-1}B_0^{-1}S_0,\end{equation}
and \(T_4\) belongs thus to the  conjugate of the quotient of \(\mathrm{GL}(2,\ZZ[\omega])\) generated by \(B_4\), \(S_4\) and~\(U_4\). It acts upon the foliations by
\begin{equation}\label{actionT}
		T^\dagger(\alpha) =\omega \alpha.
	\end{equation}

In agreement with the known results on the linear symmetries of the dual Hesse arrangement, we have the following result.
\begin{proposition}\label{sl2f3} The group of linear transformations generated by \(T_4\) and \(B_4\) is isomorphic to \(\mathrm{SL}(2,\FF_3)\).
\end{proposition}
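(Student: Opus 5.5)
We want to identify $G=\langle T_4,B_4\rangle\subset\mathrm{PGL}(3,\CC)$ with $\mathrm{SL}(2,\FF_3)$. We use that $\mathrm{SL}(2,\FF_3)$ is the binary tetrahedral group $2T$, of order $24$, with presentation
\[
2T=\langle a,b,c\mid a^2=b^3=c^3=abc\rangle.
\]
The plan is to build a surjection $2T\to G$ and then show that it is injective. All computations are $3\times3$ matrix products, done projectively, so scalar factors may be discarded throughout.

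\textbf{Step 1: relations.} We check the following identities in $\mathrm{PGL}(3,\CC)$.
\begin{itemize}
\item $B_4^2=J$, where $J(x:y:z)=(y:x:z)$ is the involution of Eq.~(\ref{sym2}). The paper already records this fact.
\item $T_4^3=\mathrm{id}$.
\item $J$ is central in $G$. It commutes with $B_4$ trivially, and it commutes with $T_4$ because $T_4$ only rescales $z$.
\item Set $W=B_4T_4$ (or, if needed, $B_4T_4^{2}$). Compute its matrix and check that $W^3=J$, i.e.\ that $W^3$ is proportional to the permutation matrix of $x\leftrightarrow y$.
\end{itemize}
This last check is the one genuinely computational point. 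As a cross-check, one can look at the corresponding element $B_0T_0$ of $\mathrm{GL}(2,\ZZ[\omega])$ modulo the scalars $\langle\rho^2\rangle=\langle\omega\mathbf{I}\rangle$. Using $T_0$ from Eq.~(\ref{T0}) and $B_0$ from Proposition~\ref{gl2gen}, one computes $(B_0T_0)^3$ and sees that it equals $\rho=-\omega\mathbf{I}$ up to a power of $\omega\mathbf{I}$. By the proof of Proposition~\ref{prop-rel34}, $\rho$ corresponds to $J$. This confirms the identity without expanding the $3\times 3$ product.

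Once these hold, the assignment $a\mapsto B_4$, $b\mapsto W^{-1}$ (chosen so that $ab$ is a power of $T_4$), $c\mapsto T_4^{\pm1}$ satisfies the defining relations of $2T$. The signs of the exponents are fixed once the order of $W$ has been computed. We obtain a surjective homomorphism $\psi\colon 2T\to G$.

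\textbf{Step 2: injectivity.} The normal subgroups of $2T$ are $1$, its center $Z=\langle a^2\rangle\cong\ZZ/2\ZZ$, the quaternion subgroup $Q_8$, and $2T$ itself. Every nontrivial one contains $Z$. Now $\psi(a^2)=B_4^2=J\neq\mathrm{id}$, so $Z\not\subset\ker\psi$. Hence $\ker\psi$ is trivial and $\psi$ is an isomorphism, so $G\cong 2T\cong\mathrm{SL}(2,\FF_3)$.

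The only real obstacle is the relation $W^3=J$ in Step 1. It requires keeping careful track of the powers of $\omega$, both in the product of the matrices of $B_4$ and $T_4$ and in the projective normalization. If it turns out that $B_4T_4$ has order $3$ rather than $6$, I would use $B_4T_4^2$ instead: exactly one of the two should give the relation $(ab)^3=a^2$ in the required form.
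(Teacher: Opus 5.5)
Your Step 1 assignment is not a homomorphism, and no choice of signs repairs it. In \(2T=\langle a,b,c\mid a^2=b^3=c^3=abc\rangle\), the common value \(z=a^2=c^3\) is the central involution. So any homomorphism with \(a\mapsto B_4\) must send \(c\) to an element whose cube is \(B_4^2=J\neq\mathrm{id}\). But \(T_4^{\pm1}\) has order three in \(\mathrm{PGL}(3,\CC)\), so \(c^3=a^2\) fails for both signs. Replacing \(B_4T_4\) by \(B_4T_4^2\) cannot help either, since that only changes the image of \(b\).

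The relation \(abc=a^2\) also fails. With \(b\mapsto(B_4T_4)^{-1}\) you get \(ab\mapsto B_4T_4^{-1}B_4^{-1}\). This is a conjugate of \(T_4^{-1}\) but not a power of \(T_4\), contrary to your stated intent: its isolated fixed point is \(B_4(0:0:1)=(1:1:1)\), not \((0:0:1)\). In this presentation \(ab=zc^{-1}\), hence \((ab)^3=z^2=1\). So \(ab\) is the element of order three, and \(c=(ab)^{-1}z\) is the one that carries the central factor. The relation \((ab)^3=a^2\) in your fallback belongs to a different presentation.

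The facts you list do suffice once the assignment is corrected. Take \(a\mapsto B_4\), \(b\mapsto(T_4B_4)^{-1}\), \(c\mapsto JT_4\). Then:
\begin{itemize}
\item \(a^2\mapsto J\);
\item \((T_4B_4)^3=T_4(B_4T_4)^3T_4^{-1}=J\);
\item \((JT_4)^3=J\), because \(J\) is central and \(T_4^3=\mathrm{id}\);
\item \(abc\mapsto T_4^{-1}JT_4=J\).
\end{itemize}
The image contains \(B_4\), hence \(J=B_4^2\), hence \(T_4\), so it is all of \(\langle B_4,T_4\rangle\). Your injectivity argument then finishes the proof: every nontrivial normal subgroup of \(2T\) contains the center, which maps to \(J\neq\mathrm{id}\).

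The key identity \((B_4T_4)^3=J\) does hold by direct computation. The square \((B_4T_4)^2\) is a scalar multiple of
\[ P=\left(\begin{array}{ccc} 1&\omega^2&\omega\\ \omega^2&1&\omega\\ 1&1&1\end{array}\right), \]
and \(P\) times the matrix of \(B_4T_4\) is \(3\omega\) times the matrix of \(J\).

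Do not rely on your cross-check through \(\Phi_*\). Since \(T_0^3=-\mathbf{I}=B_0^2\) corresponds to \(J\), \(\Phi_*(T_0)\) has cube \(J\). It therefore cannot literally be the order-three map \(T_4\), and that argument proves nothing about \(B_4T_4\).

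With this repair, your route is a self-contained matrix computation in \(\mathrm{PGL}(3,\CC)\). The paper instead works in \(\mathrm{GL}(2,\ZZ[\omega])\). It identifies \(\langle B_0,T_0\rangle\) with \(\mathrm{SL}(2,\FF_3)\) by reducing modulo \(1-\omega\) and using the \(A_4\)-action on \(\PP^1(\FF_3)\). It then gets injectivity into \(\mathrm{PGL}(3,\CC)\) from the \(A_4\)-action on the four degenerate foliations \(\mathcal{F}^4_\alpha\), \(\alpha\in\{1,\omega,\omega^2,\infty\}\), together with \(B_4^2=J\neq\mathrm{id}\).
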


\begin{proof} The ring homomorphism   \(\phi\colon \ZZ[\omega]\to \FF_3\), \(\phi(m+n\omega)=n+m\pmod 3\) induces a group homomorphism  \(\phi_*\colon \mathrm{GL}(2,\ZZ[\omega])\to \mathrm{GL}(2,\FF_3)\). The images \(\overline{T}_0\) and \(\overline{B}_0\) of \(T_0\) and \(B_0\) under \(\phi_*\) lie in \(\mathrm{SL}(2,\FF_3)\), and are 
\begin{align*} \overline{T}_0 & = \left(\begin{array}{cc} 2 & 0 \\  1 & 2 \end{array}\right), & \overline{B}_0 & = \left(\begin{array}{cc} 0 & 1\\  2 & 0 \end{array}\right).\end{align*}
The  actions of these on \(\PP^1(\FF_3)\) by fractional linear transformations are as follows: 	\(\overline{B}_0\) exchanges \(0\) and \(\infty\) on the one hand, and \(1\) and \(2\) on the other; on its turn, \(\overline{T}_0\) fixes~\(0\), and permutes cyclically \(1\),  \(\infty\),  and \(2\).  Within the group of permutations of \(\PP^1(\FF_3)\), the image of \(\langle \overline{B}_0, \overline{T}_0\rangle\) is thus isomorphic to the alternating group in four symbols \(A_4\), of order \(12\). It has the associated presentation
\[\langle B,T \mid  B^2=1, T^3=1, (BT)^3=1 \rangle.\]
The kernel of the resulting homomorphism \(\langle B_0, T_0\rangle \to A_4\) is thus normally generated by \(B_0^2=-\mathbf{I}\), \(T_0^3=-\mathbf{I}\) and \((B_0T_0)^3=-\mathbf{I}\); Since \(-\mathbf{I}\) is central, this kernel reduces to \(\{\mathbf{I}, \mathbf{-I}\}\). Since \(-\mathbf{I}\) has a nontrivial image under \(\phi_*\),  the group \(\langle B_0, T_0\rangle\) is isomorphic to a subgroup of \(\mathrm{SL}(2,\FF_3)\) of order \(2|A_4|=24\), and is thus all of \(\mathrm{SL}(2,\FF_3)\).

Consider now the image \(\langle B_4, T_4\rangle\) of \(\langle B_0, T_0\rangle\) in \(\mathrm{PGL}(2,\CC)\). Its action on the family of foliations \(\mathcal{F}^4\) is given as follows:  \(T_4\) acts  upon Lins Neto's parameter \(\alpha\) by~(\ref{actionT}); thus, \(T_4\) fixes
\(\mathcal{F}^4_\infty\), and cyclically permutes  \(\mathcal{F}^4_1\), \(\mathcal{F}^4_\omega\) and \(\mathcal{F}^4_{\omega^2}\). The action of \(B_4\), given by (\ref{act.lins.par}), exchanges \(\mathcal{F}^4_\infty\) and \(\mathcal{F}^4_1\) on one hand, and \(\mathcal{F}^4_\omega\) and \(\mathcal{F}^4_{\omega^2}\) on the other. In this  way, \(\langle B_4, T_4\rangle\) acts upon the foliations \(\mathcal{F}_{\alpha}^{4}\) for \(\alpha\in\{1,\omega,\omega^2,\infty\}\) as the full alternating group in four symbols. Since \(B_4^2\) is the nontrivial involution \(J\), which preserves every one of these foliations, the mapping \(\langle B_0, T_0\rangle\to\mathrm{PGL}(2,\CC)\) is injective. \end{proof}

\begin{remark} In Fujiki's studies on finite groups acting on complex abelian surfaces by Lie group automorphisms  \cite{fujiki}, the group \(\mathrm{SL}(2,\FF_3)\)  appears under the name of the \emph{binary tetrahedral group}. The group of automorphisms of \(E\times E\) generated by \(\langle B_0, T_0\rangle\approx  \mathrm{SL}(2,\FF_3)\) and \(\rho^2\) appears in Fujiki's article as  item~8 in Table~9, where it is  shown to be maximal among finite groups acting by Lie group automorphisms on \(E\times E\). Its extension by \(\ZZ/3\ZZ\times\ZZ/3\ZZ\) gives a group   of order~\(648\) acting by biholomorphisms on \(E\times E\), containing the group generated by \(\rho^2\) in its center. The birational map of Theorem~\ref{thm:ln-desc} gives a homomorphism, of geometric origin, from this group   to the group of linear automorphisms of the dual Hesse configuration. \end{remark}

\subsection{An alternative generating set}

The quadratic Cremona involutions of the plane
\begin{align*}   
	Q_1(x:y:z) & = (y^2 -x z : x^2 - y z: z^2- x y  ),\nonumber \\ 
	Q_\omega (x:y:z) & = (\omega y^2 - x z : \omega x^2 - y z: z^2- \omega^2 x y), \nonumber \\ 
	Q_{\omega^2} (x:y:z) & = (\omega^2 y^2 - x z : \omega^2 x^2 - yz : z^2- \omega x y), \nonumber \\ 
	Q_\infty(x:y:z) & =  ( y z: x z: x y ),   
\end{align*}  
were introduced in   \cite{mendes-puchuri-effective} as birational maps that  preserve the family \(\mathcal{F}^4\), and that are specially adapted to the degenerate foliations of the family (for instance, the base points of  \(Q_\kappa\) are the points of \(\mathcal{P}_\kappa\)).

With the transformation \(T_4\) of Eq.~(\ref{T4}), we have the  factorizations
\begin{align*}  
	S_4  & = Q_{\omega} \circ T_4^{-1}, \\ 
	U_4 & = T_4^{-1}\circ  B_4 \circ T_4\circ Q_\infty \circ   Q_{\omega}.\end{align*}
We also have that 
\begin{align*} 
	Q_{1} & =B_4 \circ Q_\infty\circ B_4^{-1},\\
	Q_{\omega^2} & =T_4^{-1} \circ Q_1\circ T_4, \\
	Q_{\omega} & =T_4^{-1}  \circ Q_{\omega^2}\circ T_4, 
\end{align*}
and that \(Q_{\infty}\) is the image under \(\Phi_*\) of the element of \(\mathrm{GL(2,\ZZ[\omega])}\)
\begin{equation}\label{R0}
	R_0=\left(\begin{array}{cc} 1 & 2\omega+1 \\ 0 & -1\end{array}\right) =  S_0U_0^2S_0^{-1}U_0^{-1}B_0S_0^3B_0^{-1}.
\end{equation}	

As a consequence, we have following result.
\begin{proposition}\label{prop:altgen} The  action of  \(\mathrm{GL}(2,\ZZ[\omega])\) by birational transformations on \(\PP^2\) of Theorem~\ref{thm:bir-f4} is generated by the linear transformations \(B_4\) and \(T_4\), and by the standard quadratic Cremona involution \(Q_\infty\). The group of birational transformations of \(\PP^2\) that preserve the family \(\mathcal{F}_4\) is generated by these and by the linear map~\(L_4\).
\end{proposition}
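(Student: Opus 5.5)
The proof is a bookkeeping argument on top of Theorem~\ref{thm:bir-f4}, using the factorizations displayed just before the statement. Let \(G\) be the subgroup of \(\mathrm{Bir}(\PP^2)\) generated by \(B_4\), \(T_4\) and \(Q_\infty\). By Theorem~\ref{thm:bir-f4}, the action of \(\mathrm{GL}(2,\ZZ[\omega])\) is generated by \(B_4\), \(S_4\) and \(U_4\). It therefore suffices to check two things: that \(B_4,T_4,Q_\infty\) all lie in this action, and that \(S_4\) and \(U_4\) lie in \(G\).

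For the first point, \(B_4=\Phi_*(B_0)\) by definition. Next, \(T_4=\Phi_*(T_0)\) with \(T_0\) given as the explicit word (\ref{T0}) in \(B_0,S_0,U_0\). Finally, \(Q_\infty=\Phi_*(R_0)\) with \(R_0\) given by the word (\ref{R0}). Both words are verified by multiplying \(2\times 2\) matrices over \(\ZZ[\omega]\).

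The identifications \(T_4=\Phi_*(T_0)\) and \(Q_\infty=\Phi_*(R_0)\) are not just matrix identities: they need the induced birational maps to agree. I would handle this as in the proof of Theorem~\ref{thm:bir-f4}. By Lemma~\ref{lemma:ratfun}, a birational map is determined by its pullback action on \(p_0/p_1\), \(q_0/H\) and \(q_1/H\). So I compute \(T_0^*\) and \(R_0^*\) on \(\wp(u)/\wp(v)\), \(\wp'(u)\) and \(\wp'(v)\), using (\ref{sym:two}), (\ref{sym:three}) and the addition formulas, and compare with \(T_4^*\) and \(Q_\infty^*\) on the corresponding rational functions. The one for \(T_4\) is immediate, since \(T_0\) is lower triangular with a single \(1\). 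The one for \(R_0\) involves \(\wp(v)\) and \(\wp'\) at \(u+(2\omega+1)v\). I expect this to be the main obstacle. What I would try first is to avoid expanding \(\wp(u+(2\omega+1)v)\) directly: decompose \(R_0\) through the word (\ref{R0}) and compose the already known pullbacks of \(S_4\), \(U_4\) and \(B_4\). Equivalently, this amounts to checking the claimed factorizations as compositions of explicit rational maps, which is a finite symbolic computation.

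For the second point, the factorizations give \(S_4=Q_\omega\circ T_4^{-1}\) and \(U_4=T_4^{-1}B_4T_4Q_\infty Q_\omega\). The relations \(Q_1=B_4Q_\infty B_4^{-1}\), \(Q_{\omega^2}=T_4^{-1}Q_1T_4\) and \(Q_\omega=T_4^{-1}Q_{\omega^2}T_4\) express \(Q_\omega\) as a word in \(B_4\), \(T_4\) and \(Q_\infty\). Hence \(S_4,U_4\in G\). Each factorization is an identity between explicit polynomial maps of low degree, checked by direct substitution up to a common scalar factor.

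For the full group preserving \(\mathcal{F}^4\), Theorem~\ref{thm:bir-f4} adds \(L_4\) and \(L_4'=B_4L_4B_4^{-1}\) to the generators. Since \(L_4'\) is already a word in \(B_4\) and \(L_4\), adjoining \(L_4\) to \(B_4,T_4,Q_\infty\) generates the whole group.
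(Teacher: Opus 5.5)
Your proposal is correct and is essentially the paper's argument. The paper likewise deduces the statement from Theorem~\ref{thm:bir-f4} together with the displayed factorizations \(S_4=Q_\omega\circ T_4^{-1}\) and \(U_4=T_4^{-1}B_4T_4Q_\infty Q_\omega\), the conjugation relations among the \(Q_\kappa\), and the identifications \(T_4=\Phi_*(T_0)\) and \(Q_\infty=\Phi_*(R_0)\). One small correction: checking \(T_4=\Phi_*(T_0)\) is not immediate, since \(T_0(u,v)=(-\omega u,\,u-\omega^2 v)\), so pulling back \(\wp(v)\) and \(\wp'(v)\) needs the addition formulas exactly as for \(U_0\); your fallback of composing the known maps along the word~(\ref{T0}) handles this.
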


\subsection{Birational symmetries of the other families}
We may give analogues of Theorem~\ref{thm:bir-f4} for the other families. By considering the birational maps induced by the first three transformations of Theorem~\ref{thm:bir-f4} on the quotient~(\ref{quot_propj}), we obtain:

\begin{theorem}\label{bir:3fam} The group of birational automorphisms of \(\PP^2\) that preserve the family  \(\mathcal{F}^3\)  is isomorphic to \(\mathrm{PGL}(2,\ZZ[\omega])\). It is generated by 
\[B_3(x:y:z) =  (  (2z-x)(x+z): x^2-3yz-xz+z^2:(x+z)^2),\] 
\begin{multline*} S_3(x:y:z) =   (
\omega^2(\omega zx+2yz-x^2)(y-\omega z)(y-\omega^2 z) : \\    :
\omega (3\omega xyz+\omega^2yz^2 -\omega x^3+y^2z)(y-\omega z): 
  z(y-\omega^2 z)(y^2+yz+z^2)),\end{multline*}
\begin{multline*} U_3(x:y:z)  =   (
    (2 x^2- \omega x y-\omega^2x z-4y z) (x+\omega y) (  z+\omega x)   : \\ 
   \;\;  : (x^2+\omega^2 y^2-3 y z-\omega x y)(z+\omega x)^2  
 : (x^2+\omega z^2-3yz-\omega^2 xz)(x+\omega y)^2), \end{multline*}
corresponding to the generators of \(\mathrm{GL}(2,\ZZ[\omega])\) of Proposition~\ref{gl2gen} through the birational  identification of \(\PP^2\) and \((E\times E)/\langle\rho\rangle\) given by Theorem~\ref{thm:ln-desc}
 and Proposition~\ref{prop-rel34}. \end{theorem}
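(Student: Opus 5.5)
The plan is to deduce the theorem from Theorem~\ref{thm:bir-f4}, Proposition~\ref{prop-rel34} and Propositions~\ref{prop:aff-bir} and~\ref{id:groups}, by pushing the generators of the degree-four symmetry group down through the quotient map \(\tau\) of Eq.~(\ref{quot_propj}).

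First we identify the group abstractly. By Proposition~\ref{prop-rel34}, \(\tau\circ\Phi\) is a birational identification of \((E\times E)/\langle\rho\rangle\) with \(\PP^2\) that sends linear foliations to the foliations of \(\mathcal{F}^3\). The group \(\Gamma=\langle\rho\rangle\ltimes\ZZ[\omega]^2\) contains a lattice of finite index and preserves every foliation by parallel lines. Proposition~\ref{prop:aff-bir} then says that every birational symmetry of the family \(\mathcal{F}^3\) is induced by an affine map normalizing \(\Gamma\). By the first item of Proposition~\ref{id:groups}, the group of such maps is \(N_{G_1}/G_1\). Translations by \(\ZZ[\omega]^2\) act trivially, and \(\langle\rho\rangle\) consists of the scalar units \(\pm1,\pm\omega,\pm\omega^2\), which are exactly the scalars in \(\mathrm{GL}(2,\ZZ[\omega])\). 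Hence the group is \(\mathrm{PGL}(2,\ZZ[\omega])\), and by Proposition~\ref{gl2gen} it is generated by the images of \(B_0,S_0,U_0\).

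Next we compute the generators. Every element of \(\mathrm{GL}(2,\ZZ[\omega])\) commutes with the scalar \(\rho\). So \(B_4,S_4,U_4\) commute with \(J=\Phi_*(\rho)\), and they descend through \(\tau\): the maps \(B_3,S_3,U_3\) are characterized by \(\tau\circ X_4=X_3\circ\tau\) for \(X\in\{B,S,U\}\). To verify the stated formulas, we check these three identities of rational maps. Writing \(X_4=(P:Q:R)\), the identity reads
\[X_3\bigl(z(x+y):xy:z^2\bigr)=\bigl(R(P+Q):PQ:R^2\bigr)\]
up to a common factor. The check for \(B_4\) is immediate, since \(B_4\) is linear. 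For \(S_4\) and \(U_4\), the right-hand side is \(J\)-invariant, so it is a rational function of \(x+y\), \(xy\) and \(z\); to obtain \(X_3\) we rewrite it in these invariants and then substitute \(x+y\mapsto x/z\), \(xy\mapsto y/z\) after homogenizing.

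An alternative route that avoids the symmetrization is to use the field of \(J\)-invariant rational functions. This field is generated by \(p_0/p_1\), \((q_0/H)^2\) and \((q_0/H)(q_1/H)\), which correspond to \(\wp(u)/\wp(v)\), \(\wp'(u)^2\) and \(\wp'(u)\wp'(v)\), since \(\rho\) negates both \(\wp'\)'s. The action of each generator on these functions is read off from the Weierstrass formulas exactly as in the proof of Theorem~\ref{thm:bir-f4}.

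The main obstacle is computational. Rewriting the \(J\)-invariant components of \(U_4\) and \(S_4\) (of degree \(6\) and \(2\)) in terms of the invariants and cancelling common factors has to reproduce exactly the displayed cubic formulas. I would do this by computer algebra, and for \(S_4\) also by hand. As a final check, I would verify that the induced action on \(\alpha\) matches Eq.~(\ref{act.lins.par}), by comparing the pushforward of the form \(\Omega+\alpha\Theta\).
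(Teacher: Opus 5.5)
Your proposal is correct and follows the paper's argument. The paper obtains \(B_3,S_3,U_3\) as the maps induced through the quotient \(\tau\) of Eq.~(\ref{quot_propj}) by \(B_4,S_4,U_4\), which commute with \(J=\Phi_*(\rho)\); the identification of the group with \(\mathrm{PGL}(2,\ZZ[\omega])\) comes, as in your first paragraph, from Proposition~\ref{prop:aff-bir}, the first item of Proposition~\ref{id:groups} and Proposition~\ref{gl2gen}, and the identities \(\tau\circ X_4=X_3\circ\tau\) you propose to check do hold for the displayed formulas.
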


Observe that, by the arguments in the proof of Proposition~\ref{prop:non-effective}, the action of the group of birational automorphisms of \(\PP^2\) that preserve the family  \(\mathcal{F}^3\) is  effective, this is, the only birational transformation  that preserves every foliation in \(\mathcal{F}^3\) is the identity.

Among the birational transformations of \(\mathcal{F}^3\) given by Theorem~\ref{bir:3fam}, we have the linear ones 
\begin{align}
	T_3(x:y:z) & =  (\omega x:y:\omega^2 z), \label{T3}\\
	R_3(x:y:z) & =(x:z:y),   \label{R3}
\end{align}
induced, respectively, by the transformations  \(T_0\) and \(R_0\) of Eqs.~(\ref{T0}) and (\ref{R0}). These act 
upon the irreducible components of the configuration \(\mathcal{C}\) of Eq.~(\ref{conf_f3}), given in Eqs.~(\ref{triangle_f3}) and~(\ref{conics_f3}),  by preserving  \(C_1\) and \(C_2\), and by  acting as generators of the symmetric group \(S_3\) on the lines of the triangle \(l_0l_1l_2\) of Eq.~(\ref{triangle_f3}):
\begin{align*}
	T_3^*(l_0)&=l_1, &  T_3^*(l_1)&=l_2,  & T_3^*(l_2)& =l_0, \\
	R_3^*(l_0)&=l_0, & R_3^*(l_1) &=l_2,  & R_3^*(l_2)&=l_1. 
\end{align*}
The transformation \(T_3\) acts upon the foliations by (\ref{actionT}), and for \(R_3\) we have
\begin{equation}\label{actionR}
	R^\dagger(\alpha) =\frac{1}{\alpha}. 
\end{equation}

The order-four linear symmetry \(B_4\) of \(\mathcal{F}^4\) produces the quadratic involution \(B_3\) for~\(\mathcal{F}^3\). In a way similar to Proposition~\ref{prop:altgen}, we have:

\begin{proposition} The group of birational transformations of \(\PP^2\) that preserve the family of foliations \(\mathcal{F}_3\) is  generated by the linear transformations \(T_3\) and \(R_3\), and by the  quadratic   involution~\(B_3\).
\end{proposition}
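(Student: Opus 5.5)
The plan is to reduce the statement to a group-theoretic fact about \(\mathrm{PGL}(2,\ZZ[\omega])\) and then transport it through the birational identification of Theorem~\ref{bir:3fam}. By that theorem and Proposition~\ref{gl2gen}, the group of birational transformations preserving \(\mathcal{F}^3\) is the image of \(\mathrm{GL}(2,\ZZ[\omega])\), acting through its quotient \(\mathrm{PGL}(2,\ZZ[\omega])\). It is generated by the images \(B_3,S_3,U_3\) of \(B_0,S_0,U_0\). The linear maps \(T_3\) and \(R_3\) are the images of \(T_0\) and \(R_0\) of Eqs.~(\ref{T0}) and~(\ref{R0}). It therefore suffices to show that \(B_0\), \(T_0\), \(R_0\), together with the scalar matrices (which act trivially on \((E\times E)/\langle\rho\rangle\)), generate \(\mathrm{GL}(2,\ZZ[\omega])\). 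Equivalently, one can show directly that \(S_3\) and \(U_3\) lie in \(\langle B_3,T_3,R_3\rangle\).

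The first step is to express \(S_0\) and \(U_0\), modulo scalars, as words in \(B_0,T_0,R_0\). The scalars are the powers of \(-\omega\mathbf{I}\), that is, the powers of \(\rho\). The natural route is to mimic, one level down, the factorizations preceding Proposition~\ref{prop:altgen}, namely \(S_4=Q_\omega\circ T_4^{-1}\) and \(U_4=T_4^{-1}B_4T_4Q_\infty Q_\omega\), together with \(Q_1=B_4Q_\infty B_4^{-1}\) and the \(T_4\)-conjugation relations among the \(Q_\kappa\). In \(\mathrm{GL}(2,\ZZ[\omega])\) these become the matrix identities
\[S_0\equiv Q^{(\omega)}_0T_0^{-1},\qquad U_0\equiv T_0^{-1}B_0T_0R_0Q^{(\omega)}_0,\qquad Q^{(\omega)}_0=T_0^{-1}\bigl(T_0^{-1}B_0R_0B_0^{-1}T_0\bigr)T_0,\]
where \(Q_0^{(\omega)}\) denotes the matrix inducing \(Q_\omega\). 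Each identity is read up to \(\langle\rho^2\rangle\), and these become exact modulo \(\langle\rho\rangle\). This is a finite \(2\times 2\) check over \(\ZZ[\omega]\), using \(R_0=\bigl(\begin{smallmatrix}1&2\omega+1\\0&-1\end{smallmatrix}\bigr)\), \(T_0=\bigl(\begin{smallmatrix}-\omega&0\\1&-\omega^2\end{smallmatrix}\bigr)\) and \(B_0\).

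Having the matrix words, I would push them through \(\Phi_*\) and the quotient of Proposition~\ref{prop-rel34}. This gives expressions \(S_3=Q_\omega^{(3)}\circ T_3^{-1}\) and so on, where \(Q_\omega^{(3)}=T_3^{-2}B_3R_3B_3^{-1}T_3^{2}\). In particular the image of \(R_0\) in the degree-three family is the linear map \(R_3\). This reflects that the quadratic \(Q_\infty\) commutes with \(J\) and descends under \(\tau\) to a linear map. That is consistent with Eq.~(\ref{R3}), and it is why \(B_3\) is the only nonlinear generator needed. As a sanity check, the induced actions on \(\alpha\) can be compared: \(T^\dagger(\alpha)=\omega\alpha\), \(R^\dagger(\alpha)=1/\alpha\), and \(B^\dagger\), \(S^\dagger\), \(U^\dagger\) as in Eq.~(\ref{act.lins.par}). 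Since the action on \(\mathcal{F}^3\) is effective (remark after Theorem~\ref{bir:3fam}), agreement on \(\alpha\) plus agreement in \(\mathrm{PGL}(2,\ZZ[\omega])\) suffices.

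The main obstacle is the bookkeeping of the words modulo \(\langle\rho\rangle\). One must get the order of composition right, since \(\Phi_*\) is a homomorphism while pullbacks reverse order. One must also confirm that \(R_0\) really descends to \(R_3\) and not to \(R_3\) composed with \(J\); the latter is harmless, since \(J\) becomes trivial on the quotient. I would first verify the matrix identities symbolically, and, if any fails, fall back on Proposition~\ref{gl2gen}'s Euclidean-algorithm argument. That argument needs only \(B_0\), the diagonal matrices, and the unipotents \(\bigl(\begin{smallmatrix}1&s\\0&1\end{smallmatrix}\bigr)\). So it would suffice to produce \(U_0\) and \(\bigl(\begin{smallmatrix}1&\omega\\0&1\end{smallmatrix}\bigr)\) from \(R_0\), its \(T_0\)-conjugates and \(B_0\), and the diagonal matrices from \(T_0\) and \(B_0\) modulo scalars.
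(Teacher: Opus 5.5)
Your proposal is correct and is essentially the paper's argument: the paper proves this proposition in the same way as Proposition~\ref{prop:altgen}, by pushing the factorizations \(S_4=Q_\omega\circ T_4^{-1}\), \(U_4=T_4^{-1}B_4T_4Q_\infty Q_\omega\) and \(Q_\omega=T_4^{-2}B_4Q_\infty B_4^{-1}T_4^{2}\) down through \(\tau\), where \(B_4\), \(T_4\) and \(Q_\infty\) descend to \(B_3\), \(T_3\) and \(R_3\). One small correction: your matrix identities hold only modulo \(\langle\rho\rangle\), not modulo \(\langle\rho^2\rangle\) (one finds \(Q_0^{(\omega)}T_0^{-1}=\rho S_0\) and \(T_0^{-1}B_0T_0R_0Q_0^{(\omega)}=\rho^3U_0\)), but modulo \(\langle\rho\rangle\) is exactly what the degree-three statement needs.
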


For the family \(\mathcal{F}^2\), we may establish  a result  analogous to Theorem~\ref{bir:3fam} by conjugating  the birational maps of Theorem~\ref{bir:3fam} by the birational map induced by~(\ref{3to2}).

\section{First integrals and the singularities of their level curves} 

In this section we establish  Theorem~\ref{thm:nodes},  as well as the related Corollary~\ref{coro:ln_degree}. We also give, in Theorem~\ref{node:f3}, an analogous result  for \(\mathcal{F}^3\). 

\subsection{The case of \texorpdfstring{\(\mathcal{F}^4}{F4}\) }\label{sec:thm_mult_f4} The  dual Hesse arrangement (\ref{arr:hesse})  contains a set of twelve points, which we will denote  by \(\mathcal{P}\), at which the lines of the arrangement intersect by triples, and at which a generic foliation of \(\mathcal{F}^4\) has nodes (dicritical, with ratio of eigenvalues \(1:1\), and local first integral \(x/y\)). The only foliations in \(\mathcal{F}^4\) which have degenerate singular points are the foliations \(\mathcal{F}_{\alpha}^{4}\) for \(\alpha\in \{1,\omega,\omega^2,\infty\}\). The degenerate points of such an \(\mathcal{F}_\alpha^4\) are those in \(\mathcal{P}_\alpha\), for the subsets of \(\mathcal{P}\)  defined in connection with Theorem~\ref{thm:nodes}.

Let us describe \(\mathcal{F}_{\infty}^{4}\). It has the cubic first integral  \(\phi_4=(x^3+y^3-2z^3)/(x^3-y^3)\), whose integral curves form a pencil of cubics with nine base points, those in \(\mathcal{P}\)  which are not in~\(\mathcal{P}_\infty\). At the points of \(\mathcal{P}_\infty\), the first integal \(\phi_4\) takes the values:  \(\infty\) at \((0:0:1)\), \(-1\)  at \((0:1:0)\), and \(1\) at \((1:0:0)\). After blowing up the twelve points of \(\mathcal{P}\), \(\phi_4\) becomes an elliptic fibration with three singular fibers above these three values. The nine lines of the arrangement, which pass by triples through the three points of~\(\mathcal{P}_\infty\),  become curves of self-intersection \(-3\), and the points in \(\mathcal{P}_\infty\),   curves of self-intersection \(-1\). Thus, each one of the three singular fibers of \(\phi_4\) is a blown-up fiber of type \(\mathrm{IV}\) within Kodaira's models of singular fibers of elliptic fibrations~\cite[Ch.~5]{BPHV},  as discussed in Section~\ref{tomados}. This is consistent with the description of Section~\ref{sec:deg4}: through the identification with \((E\times E)/\langle\rho^2\rangle\), \(\phi_4\) equals \(\wp'(u)\) (which is a first integral of \(\indel{v}\), and is invariant under the action of \(\rho^2\)). The singular fibers of \(\wp'(u)\) correspond to the three values taken by \(\wp'(u)\) at the  fixed points of the action of \(\rho_0\) on \(E\); these are: \(0\), the pole of \(\wp(u)\),  plus the two zeros of \(\wp(u)\). Thus, for  \(\alpha\in\QQ(\omega)\), the number of times that a leaf of \(\mathcal{F}_{\alpha}^{4}\) passes through a node in \(\mathcal{P}_\infty\) may be calculated by counting the number of points that one of the irreducible  components of its preimage in \(E\times E\) intersects a fiber of  the projection onto the first factor.

In \(\CC^2\), the line \(L_\lambda\) with slope \(\lambda\), \(v=\lambda u\),  intersects the line \(u=u_0 \) at the point \((u_0,\lambda u_0)\). By considering all the points where \(u_0\) belongs to \(\ZZ[\omega]\), we have that, in \(E\times  E\), the image of \(L_\lambda\) intersects the elliptic curve \(\{0\}\times E\) at the points \( \lambda \ZZ[\omega]/ \ZZ[\omega]\subset \CC/\ZZ[\omega]\). Let \(\lambda\in\QQ(\omega)\),  \(\lambda=p/q\), with \(p\) and \(q\) relatively prime in \(\ZZ[\omega]\). Observe that \((p/q)\ZZ[\omega]/\ZZ[\omega] =p\ZZ[\omega]/q\ZZ[\omega]\). We claim that \(p\ZZ[\omega]/q\ZZ[\omega]= \ZZ[\omega]/q\ZZ[\omega]\). Since \(p\) and \(q\) are relatively prime, there exist \(n\) and \(m\)  in \(\ZZ[\omega]\) such that \(mp+nq=1\). Taking this equality modulo \(q\) shows that \(1\in  p\ZZ[\omega]/q\ZZ[\omega]\). From the invariance of \(\ZZ[\omega]\) by multiplication by \(\omega\), we also have that \(\omega\in  p\ZZ[\omega]/q\ZZ[\omega]\), and we conclude that \(p\ZZ[\omega]/q\ZZ[\omega]=\ZZ[\omega]/q\ZZ[\omega]\). The kernel of the group homomorphism \(\CC/q\ZZ[\omega]\to \CC/\ZZ[\omega]\) is \(\ZZ[\omega]/q\ZZ[\omega]\), and, thus,  the cardinality of the latter equals the degree of the former as a covering map. In consequence, the number of points in \(\ZZ[\omega]/q\ZZ[\omega]\) equals   \[\frac{\mathrm{area}(\CC/q\ZZ[\omega])}{\mathrm{area}(\CC/\ZZ[\omega])}=N(q),\]
for \(N(q)\) the square of the complex norm of \(q\).

We have established that the image of \(L_{p/q}\) in \(E\times E\) intersects \(\{0\}\times E\) in \(N(q)\) points. The image of any translate \(L_{p/q}+c\) of \(L_{p/q}\) will intersect \(\{0\}\times E\) in a set with the same number of points, one obtained from the original one by a translation.

The curve \(\{0\}\times E\) is preserved by the action of \(\rho^2\), which acts upon it by the order-three automorphism \(\rho_0^2\colon v\mapsto \omega^2 v\). In the quotient \((\{0\}\times E)/\langle \rho^2\rangle \), with finitely many exceptions, the intersection of the image of a translation of \(L_{p/q}\) will still intersect \(\{0\}\times E\) at \(N(q)\) points, as the intersection of \(L_{p/q}+c\) with \(\{0\}\times E\)  will not have two points in the same orbit of \(\rho_0^2\).

Thus, \emph{in \((E\times E)/\langle\rho^2\rangle\), the image of a generic translate of \(L_{p/q}\)  intersects the image of \(\{0\}\times E\) in \(N(q)\) points; a generic leaf of the foliation \(\mathcal{F}_{\alpha}^{4}\) with slope \(p/q\) will pass through \((0:0:1)\) with multiplicity \(N(q)\)}. 

With finitely many  exceptions,  the image of a  translate of \(L_{p/q}\) will intersect the  other singular fibers \(\phi_4^{-1}(1)\) and  \(\phi_4^{-1}(-1)\) of \(\phi_4\) (that come from  translates of \(\{0\}\times E\)) along the same number of points. This establishes the first item in Theorem~\ref{thm:nodes}.  For the other ones, we will use the linear symmetries  \(T_4\) and \(B_4\), which, as we saw in the proof of Proposition~\ref{sl2f3}, act on \(\PP^2\) by  permuting  the foliations \(\mathcal{F}_{\alpha}^{4}\) for \(\alpha\in\{1,\omega,\omega^2,\infty\}\), the ones with degenerate singular points. From Eq.~(\ref{eq:slope}), the slopes  of \(\mathcal{F}_{\alpha}^{4}\) when \(\alpha\) takes the values \(\infty\), \(1\), \(\omega\) and \(\omega^2\) are,  respectively,   \(\infty\), \(0\), \(\omega+1\) and~\(\omega\). The transformation \(B_0\) exchanges the line with slope \(p/q\) and the one with slope \(-q/p\), while the associated map in \(E\times E\) exchanges \(\{0\} \times E\) (which is a leaf of the foliation with slope \(\infty\)) with \(E\times \{0\}\) (a leaf  of the foliation with slope~\(0\)), so that \(L_{p/q}\) intersects \(E\times \{0\}\) in as many points as \(L_{-q/p}\) intersects \(\{0\}\times E\). Thus, but for finitely many exceptions,  a  leaf of a foliation with slope \(p/q\) will pass through each one of the points of \(\mathcal{P}_1\) in \(N(p)\) points. This establishes the second item of Theorem~\ref{thm:nodes}. The remaining items follow from this last one. The action of  \(T_0\) on the slope of the foliations is given by \(T(\lambda)= \omega\lambda-\omega^2\). It 
maps the  slope \(\omega\) to the slope \(0\), while mapping the slope \(p/q\) to the slope \((p-\omega q)/\omega^2q\). Thus, in \(E\times E\), the line \(L_{p/q}\)  intersects a fiber of the foliation (fibration) of slope \(\omega\) in as many points as \(L_{(p-\omega q)/\omega^2q}\) intersects \(E\times\{0\}\), which is, by the previous reasoning, in \(N(p-\omega q)\) points. A repetition of the argument establishes the third item of Theorem~\ref{thm:nodes}. For the fourth, we have that \(T_0^2\) maps the slope \(\omega+1\) to the slope~\(0\), while mapping  the slope \(p/q\) to the slope \((p+\omega^2 q)/\omega q\); by the same reasons, in \(E\times E\), the line \(L_{p/q}\) intersects each leaf of the foliation with slope \(\omega+1\) in \(N(p+\omega^2 q)\) points. The fourth item of Theorem~\ref{thm:nodes} follows in the same way as before. This finishes the proof of Theorem~\ref{thm:nodes}.

\begin{remark}\label{rem:bru}
There is an inaccuracy in Brunella's geometric description of Lins Neto's family of degree four \cite[Ex.~8.4]{brunella-birational}, as he claims that, for the four blocks of three parallel elliptic curves each in \(E\times E\) that produce the twelve nodes of the dual Hesse configuration in \(\PP^2\), the slopes are in harmonic position and not, as we have  shown here, in an equianharmonic one.
\end{remark}
 
\begin{proof}[Proof of Corollary~\ref{coro:ln_degree}] Let \(C\) be a generic integral curve  of \(\mathcal{F}_\alpha^4\), and let \(d\) be its degree. It is an irreducible elliptic curve, passing through all the points of \(\mathcal{P}\), at which it is either regular or has an ordinary multiple point, and has no other singularity. For \(\kappa\in\{1,\omega,\omega^2,\infty\}\),  let $m_\kappa$ be the multiplicity of \(C\) at each one of the three points in $\mathcal{P}_\kappa$. In general, if \(m_p\) denotes the  multiplicity of   \(C\) at   \(p\), then, for the strict transform \(\widetilde{C}\) of \(C\) after blowing up~\(p\),  \(\widetilde{C}\cdot \widetilde{C}=C\cdot C-m_p^2\) (see~\cite[Lemma 8.1.6]{ctcwall}).
After blowing up the points  of \(\mathcal{P}\), the  strict transform of  the generic integral curve of the foliation  is smooth, and has vanishing self-intersection. Since, by B\'ezout's theorem, the self-intersection of \(C\) is \(d^2\),  
\[ d^2 = 3   \sum_{\kappa\in\{1,\omega, \omega^2,\infty\}} m_\kappa^2.\]	
On the other hand, by the genus formula, since \(C\) is elliptic  and  the singularities of \(C\) are ordinary multiple points,
	\[1 = \frac{(d-1)(d-2)}{2} - 3   \sum_{\kappa\in\{1,\omega,\omega^2,\infty\}} \frac{m_\kappa(m_\kappa-1)}{2}.\]
	From these  two equalities,  
	\[ d = \sum_{\kappa\in\{1,\omega,\omega^2,\infty\}} m_\kappa,\]
and from Theorem 2 we obtain the sought result. \end{proof}	 

The foliations associated to the values of \(\alpha\) that are excluded from Theorem~\ref{thm:nodes} are all linearly equivalent to \(\mathcal{F}_\infty^4\), as the linear map \(T_4\) permutes  cyclically the foliations \(\mathcal{F}_1^4\), \(\mathcal{F}_\omega^4\) and \(\mathcal{F}_{\omega^2}^4\), and the linear map \(B_4\) exchanges  \(\mathcal{F}_\infty^4\) and  \(\mathcal{F}_1^4\). In particular, all these foliations have a first integral of degree three.

\begin{example}\label{exo:degree} For \(\alpha_0=-1-2\omega\), from (\ref{eq:slope}), \(\mathcal{F}_{\alpha_0}^4\) has slope \(\lambda_0=-\frac{2}{3}+\frac{2}{3}\omega\), which equals \(p/q\) for the relatively prime Eisenstein integers \(p=-2-2\omega\) and \(q=1+2\omega\). We have  \(N(p)=4\), \(N(q)=3\), \(N(p+\omega^2 q)=7\) and \(N(p-\omega q)=1\). The first integral of \(\mathcal{F}_{\alpha_0}^4\) has, by our result, degree~\(4+3+7+1=15\).  In Example~\ref{ex:algo:f4} we will calculate this first integral with the algorithm presented in Section~\ref{sec:algo}.
\end{example}

\subsection{The case of \texorpdfstring{\(\mathcal{F}^3\)}{F3} }  
As a consequence of Theorem~\ref{thm:nodes} and Corollary~\ref{coro:ln_degree}, we will establish  their following analogue for the foliations of the family \(\mathcal{F}^3\):
\begin{theorem} \label{node:f3} Let \(\alpha\in\QQ(\omega)\), \(\alpha\notin\{0,1,\omega,\omega^2\}\), let \(\lambda\) be its slope, as in (\ref{eq:slope}). If \(\lambda=p/q\), with \(p\) and \(q\) relatively prime elements of \(\ZZ[\omega]\), the generic integral curve of  \(\mathcal{F}_{\alpha}^{3}\)  is an immersed curve, and its singularities are:
\begin{itemize}
	\item ordinary multiple points with multiplicity
	\begin{itemize}
	\item \(2N(p)\) at \((-1:1:1)\in l_1 \cap l_2\),		
	\item \(2N(p+\omega^2q)\)	at \((-\omega^2:\omega:1)\in l_0 \cap l_1\),	
	\item \(2N(p-\omega q)\)	at \((-\omega:\omega^2:1)\in l_0 \cap l_2\); and
\end{itemize}
	\item points with  multiplicity
	\begin{itemize}
		\item \(N(p)\) at \((2:1:1)\in C_1\cap l_0\),	
		 \item \(N(p+\omega^2q)\)	at \((2\omega:1:\omega^2)\in C_1\cap l_2\),
	 \item \(N(p-\omega q)\)	at \((2\omega^2:1:\omega)\in C_1\cap l_1\),	
	\item \(N(q)\)	at \((0:0:1)\in C_1\cap C_2\), and
		\item at \((0:1:0)\in C_1\cap C_2\), \begin{equation}\label{this_number}
			N(p)+N(p-\omega q)+N(p+\omega^2q)-N(q),	
		\end{equation}	
\end{itemize}
which consist of   smooth branches in the same number as their multiplicity, with a pairwise contact of order two.	
\end{itemize}
The degree of its first integral is 
\[ 2 N(p) + 2 N(p+\omega^2 q) + 2 N(p-\omega q).\]
\end{theorem}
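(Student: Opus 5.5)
The plan is to push the description of the generic integral curve of \(\mathcal{F}_\alpha^4\) from Theorem~\ref{thm:nodes} forward through the degree-two quotient map \(\tau(x:y:z)=(z(x+y):xy:z^2)\) of Eq.~(\ref{quot_propj}), which sends \(\mathcal{F}^4_\alpha\) to \(\mathcal{F}^3_\alpha\) and is the quotient by the involution \(J\).

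\textbf{Step 1: the generic leaf upstairs is \(J\)-invariant.} Let \(C\subset\PP^2\) be a generic integral curve of \(\mathcal{F}^4_\alpha\). On \(E\times E\), \(J\) corresponds to \(\rho^3\colon(u,v)\mapsto(-u,-v)\) (Proposition~\ref{prop-rel34}). Each leaf \(L_{p/q}+c\) is mapped by it to \(L_{p/q}-c\), which is a different leaf. Hence \(J(C)\neq C\), and the image \(D=\tau(C)\) is a generic integral curve of \(\mathcal{F}^3_\alpha\) with \(\tau|_C\colon C\to D\) birational. In particular \(\deg D=2\deg C\) minus the intersection of \(C\) with the base locus of \(\tau\). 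The base points of \(\tau\) are \((0:0:1)\) and \((1:-1:0)\), and its fixed curve is the line \(x=y\). I would compute \(\deg D\) as \(D\cdot\ell\) for a generic line \(\ell\): the preimage \(\tau^{-1}(\ell)\) is a conic through the base points, so
\[\deg D = 2\deg C - m_{(0:0:1)}(C) - m_{(1:-1:0)}(C).\]

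\textbf{Step 2: track the twelve nodes.} Next, follow the twelve points of \(\mathcal{P}\), with the multiplicities given by Theorem~\ref{thm:nodes}, under \(J\) and \(\tau\).
\begin{itemize}
\item Pairs of points swapped by \(J\) map to a single point, where the two ordinary multiple points superpose. For instance \((1:\omega:\omega^2)\) and \((1:\omega^2:\omega)\) in \(\mathcal{P}_1\) are swapped and map to \((-1:1:1)\), giving an ordinary point of multiplicity \(2N(p)\). The same happens for the analogous pairs in \(\mathcal{P}_\omega\) and \(\mathcal{P}_{\omega^2}\). One must check that the two tangent cones are generically transverse, which follows from genericity.
\item Points fixed by \(J\) lie on the line \(x=y\): namely \((1:1:1)\), \((1:1:\omega)\), \((1:1:\omega^2)\), and \((0:0:1)\), the last being a base point. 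At the first three, \(\tau\) is a fold along \(x=y\), whose image is the branch conic \(C_1=x^2-4yz\). A smooth branch transverse to the fold maps to a smooth branch tangent to \(C_1\), so these points give points of multiplicity \(N(p)\), \(N(p+\omega^2q)\), \(N(p-\omega q)\) at the three tangency points of \(C_1\) with the lines. The branches there have pairwise contact of order two, since distinct directions through the fold map to curves osculating \(C_1\) to first order.
\item The base points: \((0:0:1)\in\mathcal{P}_\infty\) is blown up, and the branches of \(C\) there map to a neighbourhood of \((0:0:1)\) or \((0:1:0)\). The remaining points \((1:0:0)\) and \((0:1:0)\) of \(\mathcal{P}_\infty\) are swapped by \(J\) and map to \((0:0:1)\), so \((0:0:1)\) acquires \(N(q)\) branches.
\end{itemize}

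\textbf{Step 3: the point \((0:1:0)\), which I expect to be the main obstacle.} The multiplicity there, given in (\ref{this_number}), comes from the image of the exceptional curve over the base point \((1:-1:0)\) together with the contributions of \(C\) through the other base point. Rather than resolving \(\tau\) locally, I would obtain this multiplicity by a degree count. Using Corollary~\ref{coro:ln_degree} and the multiplicities from Theorem~\ref{thm:nodes}, we have \(\deg C=N(p)+N(q)+N(p-\omega q)+N(p+\omega^2q)\). The claimed degree of \(D\) is \(2N(p)+2N(p+\omega^2q)+2N(p-\omega q)\), and this can be checked via Step~1 by verifying that \(m_{(0:0:1)}(C)=N(q)\) and that \(C\) passes through \((1:-1:0)\) with multiplicity equal to the residual quantity. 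Then the multiplicity of \(D\) at \((0:1:0)\) is read off as \(D\cdot\ell'\) for a generic line \(\ell'\) through \((0:1:0)\), which gives (\ref{this_number}). The contact-order-two structure there should follow from the local form \(\tau\approx(x+y, xy)\) near the blown-up point.

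\textbf{Step 4: immersedness and the degree of the first integral.} Immersedness away from these points comes from \(D\) being the image of the immersed, leaf-generic curve \(C\) via a map that is a local biholomorphism off the line \(x=y\) and the base points. Finally, the degree of the first integral equals \(\deg D\), computed in Step~3.
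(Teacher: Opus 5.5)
Your proposal rests on a wrong picture of \(\tau\). The quadrics \(z(x+y)\), \(xy\), \(z^2\) vanish simultaneously only at \((1:0:0)\) and \((0:1:0)\), so these are the indeterminacy points. By contrast, \(\tau\) is defined at \((0:0:1)\) and \((1:-1:0)\), with \(\tau(0:0:1)=(0:0:1)\) and \(\tau(1:-1:0)=(0:1:0)\). Moreover \(\tau(x:y:0)=(0:1:0)\): the whole line \(z=0\) is contracted. The critical locus is therefore \(z^2(x-y)=0\), not just \(x=y\), which also invalidates the local-biholomorphism claim in Step~4.

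Consequently your treatment of \(\mathcal{P}_\infty\) is wrong:
\begin{itemize}
\item \((0:0:1)\) is an ordinary fold point on \(x=y\). That is why the target point \((0:0:1)\) carries \(N(q)\) branches with contact of order two, exactly as at \((1:1:1)\).
\item \((1:0:0)\) and \((0:1:0)\) do not go to \((0:0:1)\) at all. Their exceptional curves are mapped onto the line \(z=0\) of the target, where \(D\) is smooth. By your own first bullet, a swapped pair landing on one point would give \(2N(q)\), not \(N(q)\).
\item In Step~1 your formula reads \(2\deg C-N(q)-m_{(1:-1:0)}(C)\). But \((1:-1:0)\) lies off the dual Hesse arrangement, so it is a regular point of \(\mathcal{F}^4_\alpha\) that a generic leaf avoids. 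The formula therefore overshoots the degree by \(N(q)\), and the ``verification'' proposed in Step~3 is false.
\item There is no exceptional curve over \((1:-1:0)\) producing the point \((0:1:0)\).
\end{itemize}
Tellingly, you never use the hypothesis \(\alpha\neq 0\), although it governs exactly this point: \(z=0\) is invariant for \(\mathcal{F}^4_0\).

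The missing idea is the contraction of \(z=0\). For \(\alpha\notin\{0,\infty\}\), the line \(z=0\) is not invariant and contains only the two dicritical nodes \((1:0:0)\) and \((0:1:0)\). Each of these absorbs tangency order two out of the total of four. So after blowing them up, the foliation is transverse to the strict transform of \(z=0\). By B\'ezout, Corollary~\ref{coro:ln_degree} and Theorem~\ref{thm:nodes}, \(C\) then meets that strict transform in exactly \(\mu=\deg C-2N(q)\) distinct points, all sent to \((0:1:0)\); this gives (\ref{this_number}).

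The branch structure comes from factoring \(\tau\) through \(\PP^1\times\PP^1\). Blowing up \((1:0:0)\) and \((0:1:0)\) and contracting the \((-1)\)-curve coming from \(z=0\) gives the image of \(C\) an ordinary \(\mu\)-fold point at a dicritical node on the diagonal. The symmetric quotient is a fold there, and it turns this point into \(\mu\) smooth branches with pairwise contact of order two.

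With the base points corrected, your Step~1 becomes a valid and shorter alternative to the paper's self-intersection and genus-formula count. Indeed, \(\tau^*\ell\) is a conic through \((1:0:0)\) and \((0:1:0)\), so \(\deg D=2\deg C-2N(q)=2N(p)+2N(p+\omega^2q)+2N(p-\omega q)\).
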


\begin{proof} As  discussed in Section~\ref{sec:deg3}, the foliation \(\mathcal{F}_{\alpha}^{3}\) is the image of \(\mathcal{F}_{\alpha}^{4}\) under the degree-two map \(\tau\) of Eq.~(\ref{quot_propj}), which realizes birationally the quotient under the linear involution \(J\) of Eq.~(\ref{sym2}). The map \(\tau\) is a local diffeomorphism in the complement of the ramification divisor \(z^2(x-y)=0\). It ramifies doubly along the invariant line \(\Delta:x=y\), whose image is the invariant conic \(C_1\) of  Eq.~(\ref{conics_f3}). The points \((0:1:0)\) and \((1:0:0)\) are indeterminacy points, and the curve \(z=0\) that joins them is collapsed onto the point \((0:1:0)\).  Since \(\alpha\notin\{1,\omega,\omega^2,\infty\}\), all the singularities, and, in particular, the nodes of~\(\mathcal{F}_{\alpha}^{4}\), are non-degenerate.  Let \(C\) be a generic integral curve of \(\mathcal{F}_\alpha^3\). Let \(\widetilde{C}\) be an irreducible integral  curve of \(\mathcal{F}_\alpha^4\) that maps to \(C\).

The nodes of \(\mathcal{F}_\alpha^4\) away from the ramification divisor have as images nodes of \(\mathcal{F}_\alpha^3\) with eigenvalues \(1:1\):
\begin{itemize} 
	\item from \(\mathcal{P}_{1}\), \((1:\omega:\omega^2)\) and \((1:\omega^2:\omega)\) map to \((-1:1:1)\); 
	\item from \(\mathcal{P}_{\omega}\), \((1:\omega:1)\) and \((\omega:1:1)\) map to \((-\omega^2:\omega:1)\);  and
	\item from \(\mathcal{P}_{\omega^2}\), \((\omega^2:1:1)\) and \((1:\omega^2:1)\) map to \((-\omega:\omega^2:1)\).
\end{itemize}
The ordinary multiple points  of \(\widetilde{C}\) away from the ramification divisor, placed at these nodes, have as images ordinary multiple points of~\(C\); at each one of these points, the multiplicity of \(C\) is the sum of the multiplicities of \(\widetilde{C}\) at the two preimages of the point. We have already   calculated these in Theorem~\ref{thm:nodes}.

The  points of \(\mathcal{F}_{\alpha}^{4}\) on \(\Delta\), but away from \(z=0\), produce nodes with eigenvalues \(2:1\):
\begin{itemize} 
\item from \(\mathcal{P}_{1}\),  \((1:1:1)\)   maps to \((2:1:1)\); 
\item from \(\mathcal{P}_{\omega}\),  \((1:1:\omega)\)  maps to \((2\omega:1:\omega^2)\); 
\item from \(\mathcal{P}_{\omega^2}\),  \((1:1:\omega^2)\)   maps to \((2\omega^2:1:\omega)\); and
\item from \(\mathcal{P}_{\infty}\),   \((0:0:1)\)   maps to \((0:0:1)\).
\end{itemize}
At each one of these points, the multiplicity of \(C\) is the  multiplicity of \(\widetilde{C}\) at the preimage  of the point. We have have also calculated these in Theorem~\ref{thm:nodes}.

The eighth nodal point of \(\mathcal{F}_{\alpha}^{3}\), placed at \((0:1:0)\), has eigenvalues \(2:1\), and is the image of the line \(z=0\); let us give a more  detailed explanation of this process. The map \(\tau\) can be factored as the composition of:
\begin{itemize} 
	\item the birational map \(\PP^2\dashrightarrow\PP^1\times\PP^1\), \((x:y:z)\to(x:z,y:z)\),
which amounts to  blowing up \((1:0:0)\) and \((0:1:0)\) and collapsing the strict  transform of the line \(z=0\) that joins them, with 
\item the holomorphic map \(\PP^1\times\PP^1\to \PP^2\), \((x_0:x_1,y_0:y_1)\to (x_0y_1+x_1y_0:x_0y_0:x_1y_1)\), which realizes holomorphically the quotient under the permutation of the factors of \(\PP^1\times\PP^1\). 
\end{itemize}
Let us analyze the way in which the first map transforms \(\mathcal{F}_{\alpha}^{4}\). Since \(\alpha\neq 0\), the line \(z=0\) is not invariant by \(\mathcal{F}_{\alpha}^{4}\), and since \(\alpha\neq\infty\),  the singular points of \(\mathcal{F}_{\alpha}^{4}\) on this line, \((1:0:0)\) and \((0:1:0)\),  are dicritical nodes (everywhere transverse to the exceptional divisor after one blowup). Since  \(\mathcal{F}_{\alpha}^{4}\) has degree \(4\), it has no further singularities along \(z=0\). Thus, after blowing up the singular points on \(z=0\), \(\mathcal{F}_{\alpha}^{4}\) is everywhere transverse to the strict transform of \(z=0\). Since, as per Corollary~\ref{coro:ln_degree}, the first integral of \(\mathcal{F}_{\alpha}^{4}\) on \(\PP^2\) has degree \(N(p)+N(q)+N(p-\omega q)+N(p+\omega^2q)\), then, by B\'ezout's theorem, \(\widetilde{C}\) intersects \(z=0\) on this number of points. By Theorem~\ref{thm:nodes}, \(N(q)\) of these correspond to the node at \((1:0:0)\), and as many for the node at \((0:1:0)\). Thus, \(\widetilde{C}\) intersects the strict transform of \(z=0\) at the number of points \(\mu\) given in Eq.~(\ref{this_number}). Since this strict transform  has self-intersection \(-1\), we may collapse it to a point, at which the transformed of \(\mathcal{F}_\alpha^4\) will have a dicritical node of eigenvalues \(1:1\). This is the the point \((\infty,\infty)\) of \(\PP^1\times \PP^1\). At it, the image of \(\widetilde{C}\)  will have an ordinary multiple point with multiplicity \(\mu\). After quotient by the permutation of the factors, the node for the transform of \(\mathcal{F}_{\alpha}^{4}\) will produce a node with eigenvalues \(2:1\) for \(\mathcal{F}_{\alpha}^{3}\), and the image of \(\widetilde{C}\) will produce a point of \(C\) with multiplicity~\(\mu\), one having \(\mu\)  smooth branches with a pairwise contact of order two.  This proves the part of the statement concerning the multiplicities at the singular points. 
 
We now calculate the degrees of the first integrals. Let \(\mathcal{A}\) be the set of singular points of \(\mathcal{F}_\alpha^3\) with eigenvalues \(1:1\), and \(\mathcal{B}\) be the set of those with eigenvalues \(2:1\). For \(p\in\mathcal{A}\cup\mathcal{B}\), let \(m_p\) denote the multiplicity of \(C\) at \(p\). As in the proof of Corollary~\ref{coro:ln_degree}, for \(p\in \mathcal{A}\), \(p\) is either a regular or an ordinary multiple point of \(C\), and, when blowing it up, the self-intersection of \(C\) decreases by~\(m_p^2\). For \(p\in\mathcal{B}\), \(C\) is either regular at \(p\), or has \(m_p\) smooth branches at \(p\) with a contact of order two among any two of them. A first blow up of \(p\) reduces the self-intersection of \(C\) by \(m_p^2\), and, at the resulting exceptional divisor, \(C\) has an ordinary multiple point of the same multiplicity \(m_p^2\). Blowing up this point  decreases the self intersection of (the first strict transform of) \(C\) by  \(m_p^2\) once again.  Thus, by the same reasoning as before,
\[d^2=\sum_{p\in \mathcal{A}}m_p^2+2\sum_{p\in \mathcal{B}}m_p^2.\] 
The genus formula for $C$ reads:
\[1 = \frac{(d-1)(d-2)}{2} -\sum_{p\in \mathcal{A}}\frac{m_p(m_p-1)}{2} - \sum_{p\in\mathcal{B}} m_{p}(m_{p}-1).\]
From these formulas,
\[ 3 d = \sum_{p\in\mathcal{A}} m_p+2\sum_{p\in \mathcal{B}}m_p,\]
and, with the previous calculation of the multiplicities of \(\widetilde{C}\) at its singular points,
\begin{multline*}3d=(2N(p)+2N(p+\omega^2q)+2N(p-\omega q))+\\+2(N(p)+N(p+\omega^2q)+N(p-\omega q)+N(q))+\\+2(N(p)+N(p+\omega^2q)+N(p-\omega q)-N(q)),\end{multline*}
which establishes the assertion on the degree of the first integral.\end{proof}

\begin{example}\label{ex:f3_nonspecial} For \(\alpha=-\omega^2\), from~(\ref{eq:slope}), \(\lambda=p/q\) for the relatively prime Eisenstein integers \(p=\omega\) and \(q=1+2\omega\). We have  \(N(p)=1\),	  \(N(p+\omega^2q)=1\),   \(N(p-\omega q)=4\),  and \(N(q)=3\). By the theorem, the multiplicities of the generic integral curve at the nodes are of \(\mathcal{F}_\alpha^3\) are:  \(1\) at \((2:1:1)\); \(2\) at \((-1:1:1)\); \(1\)	at \((2\omega:1:\omega^2)\); \(2\)	at \((-\omega^2:\omega:1)\); \(4\)	at \((2\omega^2:1:\omega)\); \(8\)	at \((-\omega:\omega^2:1)\); \(3\)	at \((0:0:1)\); and \(3\) at \((0:1:0)\); the first integral has thus, according to the theorem, degree \(2(1+1+4)=12\). In Example~\ref{ex:algo:f3}, we  will calculate this first integral via the algorithm of Section~\ref{sec:algo}.
\end{example}

\subsection{The special foliations  in \texorpdfstring{\(\mathcal{F}^3\)}{F3}} 
The foliations in \(\mathcal{F}^3\) that have degenerate singularities, which we will call \emph{special}, are those whose parameter \(\alpha\) belongs to \(\{0,1,\omega,\omega^2,\infty\}\), those which are not covered by Theorem~\ref{node:f3}. They correspond to the values of the parameter where there is a coalescence of singular points (these five special values of \(\alpha\) may be characterized as the values where the mobile singular point of \(\mathcal{F}_{\alpha}^{3}\) on \(C_1\) passes through each one of  the five singularities of \(\mathcal{C}\) on \(C_1\)).

Let us begin by describing some of these special elements, one in each linear equivalence class. From (\ref{actionT}), the linear transformation  $T_3$ of Eq.~(\ref{T3})  cyclically  permutes  \(\mathcal{F}^3_1\), \(\mathcal{F}^3_{\omega}\) and \(\mathcal{F}^3_{\omega^2}\), which are thus linearly equivalent;  on its turn, from~(\ref{actionR}),  the linear map \(R_3\) of Eq.~(\ref{R3}) exchanges \(\mathcal{F}^3_\infty\) and \(\mathcal{F}^3_0\) and gives a linear equivalence between them. While the foliations $\mathcal{F}_{\infty}^{4}$ and $\mathcal{F}_{1}^{4}$ are linearly equivalent,  $\mathcal{F}_{\infty}^{3}$ and~$\mathcal{F}_{1}^{3}$, being, as we will see, pencils of curves of different degrees, are not.

\subsubsection{The foliation $\mathcal{F}_{\infty}^{3}$}  It has a first integral \(\phi_3\), of degree six, given, for the first integral \(\phi_4\) of \(\mathcal{F}_\infty^4\)  discussed in Section~\ref{sec:thm_mult_f4},  by the image of \(\phi_4^2\) in the quotient; it reads 
\begin{equation}\label{fiphi3}
\phi_3=\frac{Q_1^2}{C_1C_2^2},
\end{equation}
for the conics \(C_i\) defined in (\ref{conics_f3}) plus   \(Q_1 = x^3-3xyz-2z^3\), which defines a nodal cubic. The generic integral curve of $\mathcal{F}^3_\infty$ is an elliptic sextic having
\begin{itemize}
	\item  a triple point at $(0:1:0)$ with an infinitely near ordinary triple point, plus
	\item  ordinary double points at each one of the three vertices of the triangle $l_0l_1l_2$;
\end{itemize}
it passes through the three tangencies  between $C_1$ and the triangle $l_0l_1l_2$, in a way tangent to \(C_1\), and  does  not pass through the point $(0:0:1)$.   The pencil has a third special element,  \(z^3l_0l_1l_2\), at which the first integral takes the value~\(1\). Upon blowing up the 13 base points of the pencil (the 13 blow-ups needed to transform the configuration \(\mathcal{C}\) of Eq.~(\ref{conf_f3}) into a normal crossings one), we obtain an elliptic fibration on a surface that we will denote by $\mathrm{Bl}_{13}(\PP^2)$,  with singular fibers above the values \(0\), \(1\) and \(\infty\) of the first integral. This is sketched in Figure~\ref{fibration-oo-3}. In agreement with the discussion in Section~\ref{tomados}, from the three special fibers of \(\phi_4\), which are  singular fibers of type \(\mathrm{IV}\)  where \(\phi_4\) takes, respectively, the values \(-1\), \(1\) and~\(\infty\), the first two  come together in \(\mathcal{F}_\infty^3\) into a single fiber of type \(\mathrm{IV}\), and, out of the third one, the ramification of \(\phi_4\mapsto \phi_4^2\) at \(\infty\) produces a fiber of type \(\mathrm{II}\). The ramification  of \(\phi_4\mapsto \phi_4^2\) at \(0\) produces a fiber  of type  \(\mathrm{I_0^*}\) out of a regular fiber.

\begin{figure}
\centering	
\includegraphics[width=0.9\textwidth]{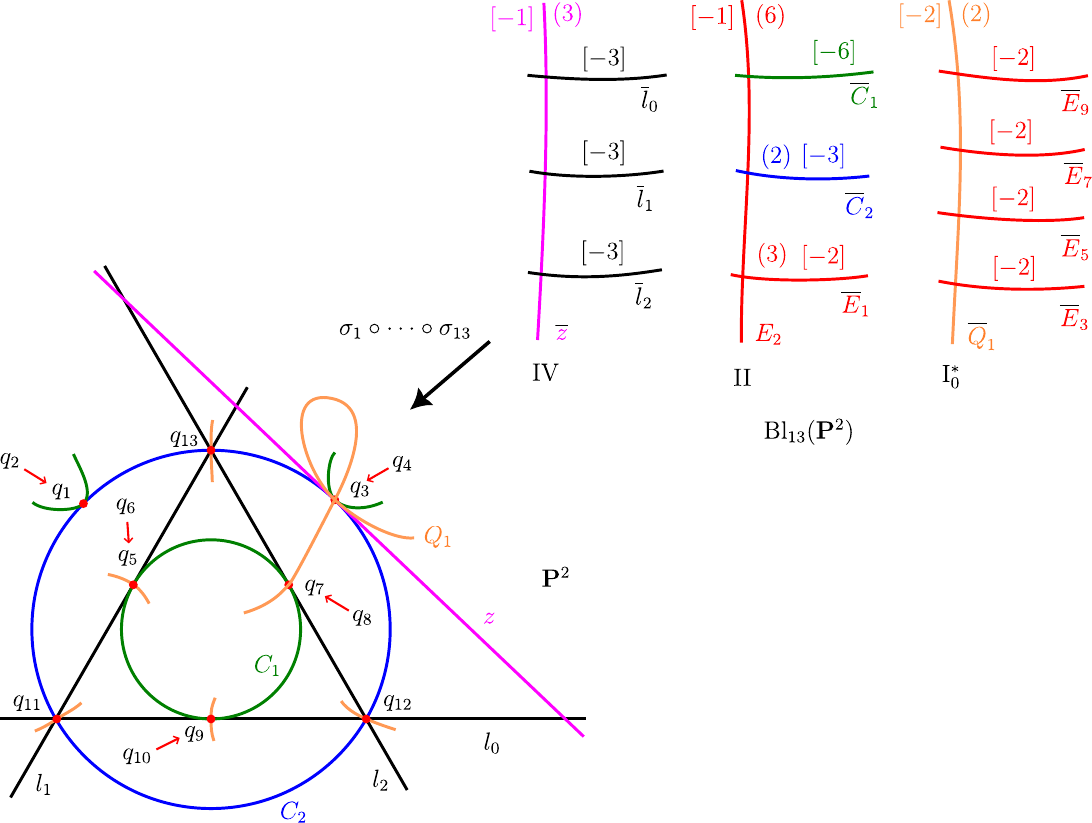}
\caption{Elliptic fibration in \(\mathrm{Bl}_{13}(\PP^2)\) given by the pencil $\mathcal{F}_{\infty}^{3}$.  For each relevant divisor, the multiplicity with which it  appears in the first integral is marked in parenthesis (if it is different from~\(1\)); its self-intersection, in square brackets. The strict transform of the curve \(F\) is denoted by \(\overline{F}\). The exceptional divisor obtained from blowing up the point \(p_i\) is denoted by~\(E_i\).} 
	\label{fibration-oo-3}
\end{figure}

The Kummer surface \(\mathrm{Kum}(E\times E)\) discussed in Section~\ref{tomados} may be recovered as an elliptic fibration  by taking the ramified triple cover of the rational curve where \(\phi_3\) takes its values, following the ramified triple cover of the orbifold \(O(3,3,3)\) by the equianharmonic orbifold \(O^*(2,2,2,2)\)  discussed in that same section.

\subsubsection{The foliation  $\mathcal{F}_{1}^{3}$}  It is given by the pencil of quartics \(l_1 l_2 C_2 : l_0^2 C_1 \), and its third special element is given by twice the conic \(C_3: x^2-2yz+xz+xy\). This pencil has the same base points as the pencil \(\mathcal{F}_\infty^3\), and becomes a fibration in the same blown-up surface \(\mathrm{Bl}_{13}(\PP^2)\). The self-intersections of the curves in $\mathrm{Bl}_{13}(\PP^2)$ are, of course, the same, but their position, their multiplicities, and the role they play in the singular fibers  change. All this is sketched in  Figure~\ref{fibration-1-3} (compare it with Figure~\ref{fibration-oo-3}).

\begin{figure}
\centering	
\includegraphics[width=0.9\textwidth]{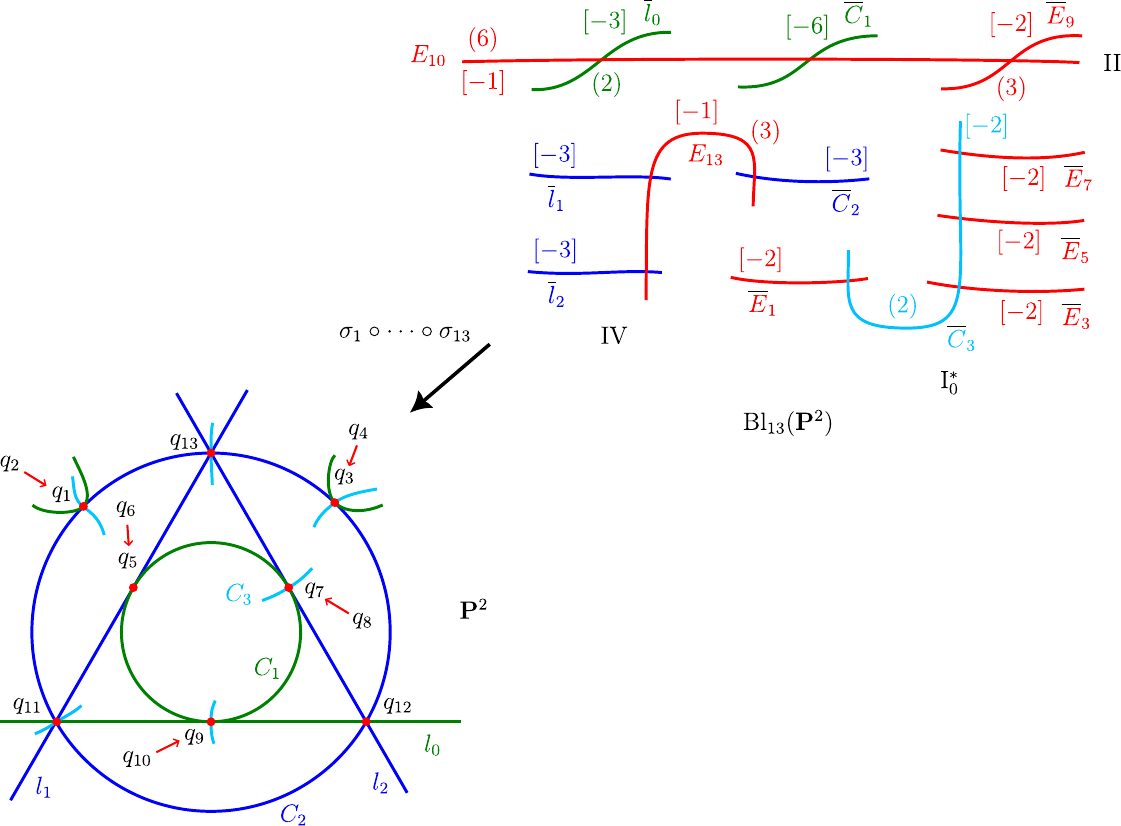}
	\caption{The foliation  $\mathcal{F}_{1}^{3}$ as an elliptic fibration on \(\mathrm{Bl}_{13}(\PP^2)\).}
	\label{fibration-1-3}
\end{figure}

The foliations $\mathcal{F}_{\infty}^{4}$ and $\mathcal{F}_{1}^{4}$ are exchanged by the quadratic  involution \(B_3\) of Theorem~\ref{bir:3fam}. The indeterminacy points of \(B_3\) are given by the point $(-1:1:1)$, the point $(0:1:0)$, and an infinitely near point above this last one in the direction of the line  $z=0$;  \(B_3\)  contracts the  line $x+z=0$ onto the point $(0:1:0)$, and the  line $z=0$ onto the point $(-1:1:1)$; conversely, these points are blown-up by \(B_3\) into the corresponding lines. For the  strict transforms of   the curves of the configurations, \(B_3(Q_1) = (C_3)\), \(B_3(l_0)=  C_2\), \(B_3(l_1)= l_2\), and \(B_3(C_1) = (C_1)\).

\section{An algorithm for the first integrals} \label{sec:algo}

As discussed  in the Introduction, we here  present an algorithm that, for every \(\alpha\in\QQ(\omega)\), gives a  birational transformation of \(\PP^2\) that  maps \(\mathcal{F}_{\alpha}^{4}\) into the foliation \(\mathcal{F}_{\infty}^{4}\).  This rational map is produced by suitably composing some of the maps of Theorem~\ref{thm:bir-f4}. In particular, by pulling-back the first integral of \(\mathcal{F}_{\infty}^{4}\), we may obtain an explicit first integral of \(\mathcal{F}_{\alpha}^{4}\).

Let \(\lambda\in\QQ(\omega)\). Let  \(a\) and \(b\) be relatively prime integers in \(\ZZ[\omega]\) such that \(\lambda=-a/b\).  Since \(a\) and \(b\) are relatively prime, there exist \(c\) and \(d\) in \(\ZZ[\omega]\) such that \(ad-bc=1\), and thus
\(\left(\begin{array}{cc} a & b \\ c & d\end{array}\right)\left(\begin{array}{r}-b \\ a \end{array}\right)=\left(\begin{array}{c} 0 \\ 1 \end{array}\right)\). This element of \(\mathrm{GL}(2,\ZZ[\omega])\) maps the foliation with slope \(\lambda\) on \(E\times E\) (the one generated by the vector field \(\indel{u}+\lambda \indel{v}\), or, equivalently, the one given by the line \(v=\lambda u\) and the ones parallel to it) into the foliation with slope \(\infty\) (the one generated by \(\indel{v}\)). For \(\alpha\) given by (\ref{lambda_to_alpha}), the birational transformation of \(\PP^2\) associated to the above linear one will transform the foliation \(\mathcal{F}_{\alpha}^{4}\) into the foliation~\(\mathcal{F}_{\infty}^{4}\). In order to implement this in an algorithmic way, we will construct the matrix and its factorization into the generators of Proposition~\ref{gl2gen} simultaneously.

Consider the slightly modified version of Euclid's algorithm:
\begin{align*}
	a & =   q_1 b -r_1 \\
	b & =   q_2 r_1 -r_2 \\ 
	r_1 & =   q_3 r_2 -r_3 \\
	&  \vdots   \\
	r_{n-1} & =   q_{n+1} r_{n} -r_{n+1} \\
	r_n & =   q_{n+2} r_{n+1},   
\end{align*}
so that
\[
\frac{a}{b}=q_1-\cfrac{1}{q_2-\cfrac{1}{\ddots-\cfrac{1}{q_{n+2}}}}.
\]
For \(q\in\ZZ[\omega]\), let
\(W_q=\left(\begin{array}{cc} q & 1 \\ -1 & 0 \end{array}\right)\). We have that
\(W_{q_1}\left(\begin{array}{r} -b \\ a \end{array}\right)=\left(\begin{array}{r} -r_1 \\ b \end{array}\right)\),
and thus \[W_{q_{n+2}}W_{q_{n+1}}\cdots  W_{q_1}\left(\begin{array}{r} -b \\ a \end{array}\right)=\left(\begin{array}{c} 0 \\ r_{n+1} \end{array}\right).\]
From
\(W_q =\left(\begin{array}{cc} 1 & -q \\ 0 & 1 \end{array}\right)B_0\),
we have, with the identity (\ref{unipotent}), a factorization of \(W_q\) into the generators of Proposition~\ref{gl2gen}: for \(m,n\in\ZZ\),
\begin{equation}\label{factor-Wq}
	W_{m+n\omega}=S_0U_0^{n}S_0^{-1}U_0^{-m}B_0.
\end{equation}

We have constructed an element \(A_\lambda=W_{q_{n+2}}\cdots W_{q_1}\) of \(\mathrm{GL}(2,\ZZ[\omega])\) as an explicit product of the generators of Proposition~\ref{gl2gen}, that, through its action on \(\CC^2\), maps the vector field
\(\indel{u}+\lambda\indel{v}\) to a multiple of \(\indel{v}\). (This element actually belongs to \(\mathrm{SL}(2,\ZZ[\omega])\), for so do the \(W_q\).)

Through the mapping \(\Phi_*\colon \mathrm{GL}(2,\ZZ[\omega])\to \mathrm{Bir}(\PP^2)\) induced by the map \(\Phi\) of Theorem~\ref{thm:ln-desc}, the birational transformation \(\Phi_*(A_\lambda)\) maps the foliation in \(\mathcal{F}^4\) of slope \(\lambda\) into~\(\mathcal{F}_{\infty}^{4}\). We have \(\Phi_*(A_\lambda)=\Phi_*(W_{q_{n+2}})\circ \cdots \circ\Phi_*(W_{q_1})\), and with~(\ref{factor-Wq}), \(\Phi_*(W_{m+n\omega})=S_4  U_4^{n}  S_4^{-1} U_4^{-m}  B_4\). This allows, in principle, to explicitly write  down \(\Phi_*(A_\lambda)\).

\begin{example}\label{ex:algo:f4} Let us revisit the foliation of  \(\mathcal{F}^4\) of Example \ref{exo:degree}. For \(\alpha_0=-1-2\omega\),   consider the foliation \(\mathcal{F}_{\alpha_0}^{4}\), which, from (\ref{eq:slope}), has slope  \(\lambda_0=-\frac{2}{3}+\frac{2}{3}\omega\). We have  that \(\lambda_0=-a/b\) for the relatively prime Eisenstein integers \(a=2+2\omega\) and  \(b=1+2\omega\). From the slightly modified version Euclid's algorithm previously discussed,  
\begin{align*}
	a & =   (1-\omega) b -(\omega+1), \\
	b & =   (2+\omega)(\omega+1),   
\end{align*}
(since their greatest common divisor is the unit \(\omega+1\), \(a\) and \(b\) are indeed relatively prime) and thus
\[\frac{a}{b}=(1-\omega)-\frac{1}{2+\omega},\]
so that, with (\ref{factor-Wq}), \(W_{2+\omega}W_{1-\omega}\) reads
\begin{equation}\label{exo:factor}
(S_0U_0S_0^{-1}U_0^{-2}B_0)
(S_0U_0^{-1}S_0^{-1}U_0^{-1}B_0)
=\left(\begin{array}{cc}  2 &  2+\omega \\ -1+\omega & -1 \end{array}\right).\end{equation}
As expected,
\[\left(\begin{array}{cc}  2 &  2+\omega \\ -1+\omega & -1 \end{array}\right) \left(\begin{array}{c} -(1+2\omega) \\ 2+2\omega \end{array}\right)=\left(\begin{array}{c} 0 \\ \omega+1 \end{array}\right).\]

Lins Neto's first integral of \(\mathcal{F}_{\infty}^{4}\) is
\begin{equation} \label{H_infty}
	H_\infty=\frac{y^3-z^3}{x^3-z^3}.
\end{equation}
Its pull-back by the map corresponding to (\ref{exo:factor}),
\begin{equation}\label{exo:pull}B_4^*(U_4^{-1})^*(S_4^{-1})^*(U_4^{-1})^*S_4^*B_4^{*}(U_4^{-2})^*(S_4^{-1})^*U_4^*S_4^*(H_\infty)\end{equation}
(which has, after simplification, degree nine), gives the rational function
\[\frac{(x^3-z^3) (\omega y^4+yx^3+y z^3-(1+2 \omega) x^2 z^2-(1-\omega) x y^2 z )^3}{(y^3-z^3)(\omega x^4+x y^3+ x z^3-(1+2\omega) y^2 z^2-(1-\omega)x^2y z)^3}.\] 
By construction, it is a first integral of \(\mathcal{F}_{\alpha_0}^{4}\). Its degree, \(15\), is the one established in~\cite{puchuri}, in agreement with Corollary~\ref{coro:ln_degree} (cf. Example \ref{exo:degree}).
\end{example}

Remark the reversal in the order of the factorizations in Eqs.~(\ref{exo:factor}) and~(\ref{exo:pull}), consistent with the remarks at the end of Section~\ref{affquot}. 

In the above example, chosen for its simplicity, the parameter \(\alpha_0\) is an Eisenstein integer, and the first integral of \(\mathcal{F}_{\alpha_0}^4\) may also be calculated via the algorithm of~\cite{mendes-puchuri-effective}. The next example does not fall within the scope of the latter:

\begin{example} Consider \(\mathcal{F}_{\alpha_0}^4\) for $\alpha_0 = -\frac{1}{2}(3+\omega)$, the foliation with slope $\lambda= -a/b$, for the relatively prime Eisenstein integers \(a=2-\omega\) and \(b=2+2 \omega\).
According to the degree formula, from either \cite{puchuri} or Corollary~\ref{coro:ln_degree}, the first integral has degree~$15$. The modified version of Euclid's algorithm reads:
\begin{align*}
	a & =   (-2\omega) b -(2+\omega), \\
	b & =   (1+\omega) (2+\omega)-(-1),   \\
	(2+\omega) & = (-2-\omega)(-1),
\end{align*}
and thus
\[\frac{a}{b}=-2\omega-\cfrac{1}{(1+\omega)-\cfrac{1}{-2-\omega}}.\]
As in the previous example, for \(A_\lambda=W_{-2-\omega}W_{1+\omega}W_{-2\omega}\),
\[\Phi_*(A_{\lambda})=(S_4  U_4^{-1}  S_4^{-1} U_4^{2}  B_4)(S_4  U_4  S_4^{-1} U_4^{-1}  B_4)(S_4  U_4^{-2}  S_4^{-1}    B_4).\]
The action of the latter via pull-back on   the rational function \(H_\infty\) of Eq.~(\ref{H_infty}) gives
\[(\Phi_*(A_{\lambda_0}))^*(H_\infty)=\frac{(x-\omega^2y) (x-z) (\omega z-y) }{(x-\omega y) (x-\omega z) (y-z) }\times \frac{f^3}{g^3},\]
for 
\[f=y^2 z^2-y^3 z-\omega^2y z^3-x^3 z+\omega^2x^2 z^2-x z^3+x^2 y z+x z^2 y+\omega^2 y^2 x z-\omega^2x^3 y+y^2 x^2-y^3 x,\]
\[g=y^3 z-\omega^2y^2 z^2+y z^3+x^3 z-x^2 z^2+\omega^2x z^3-\omega^2x^2 y z-x z^2 y-y^2 x z+x^3 y-y^2 x^2+\omega^2y^3 x.\]
By construction, this is a rational first integral of \(\mathcal{F}_{\alpha_0}^4\). Its general level curve has geometrical genus \(1\), having, by Theorem~\ref{thm:nodes}, ordinary quadruple points at the points of $\mathcal{P}_\infty$; ordinary septuple points at those of  $\mathcal{P}_1$; ordinary triple points at those of $\mathcal{P}_{\omega}$; and regular  points at those of $\mathcal{P}_{\omega^2}$.
The quartics which appear in  its numerator and denominator are rational, and have ordinary double points at the three points of~$\mathcal{P}_1$.
\end{example}

The algorithm can also the applied, via Theorem~\ref{bir:3fam}, to foliations in \(\mathcal{F}^3\):
\begin{example}\label{ex:algo:f3} Consider the foliation \(\mathcal{F}_{-\omega^2}^3\) of  the family of degree three appearing in Example~\ref{ex:f3_nonspecial}. For \(\alpha_0=-\omega^2\), we have the slope  \(\lambda=-a/b\) for the relatively prime Eisenstein integers \(a=-\omega\) and \(b=1+2\omega\). The variant of Euclid's algorithm reads
	\begin{align*}
		a & =  (-1) b -(-1-\omega), \\
		b & =   (-2-\omega)(-1-\omega),   
	\end{align*}
and thus
\[\frac{a}{b}=-1-\frac{1}{-2-\omega}.\]
We have
\[W_{-2-\omega}W_{-1} = (S_0U_0^{-1}S_0^{-1}U_0^{2}B_0) (U_0B_0)
		=\left(\begin{array}{cc}  1+\omega &  -2-\omega \\ 1 & 1 \end{array}\right).\]
The rational function  \(\phi_3\) of Eq.~(\ref{fiphi3}) is a first integral of \(\mathcal{F}_{\infty}^{3}\). The pull-back of \(\phi_3-1\) by the map corresponding to this last expression via Theorem~\ref{bir:3fam},
	\[B_3^*U_3^*B_3^*(U_3^2)^*  (S_3^{-1})^*(U_3^{-1})^*S_3^*(\phi_3-1),\]
gives the rational function
\[ -4\frac{l_0 l_1 l_2(x^3-3xyz-\omega^2y^2z-\omega yz^2)^3}{C_1C_2^2(y-\omega^2 z)^6},\]
expressed in terms of the polynomials of Eqs.~(\ref{triangle_f3}) and (\ref{conics_f3}).	
By construction, it is a first integral of~\(\mathcal{F}_{-\omega^2}^3\).
\end{example}

\section{Lins Neto's foliations in positive characteristic}

Lins Neto's foliations may be defined over many algebraically closed of positive characteristic.  
Our main results in this setting establish which foliations of \(\mathcal{F}^4\) are algebraically integrable over fields of positive characteristic (Theorem~\ref{thm:charp}) and, more generally, describe their invariant algebraic curves  (Theorem~\ref{thm:p-div}).

\subsection{The \texorpdfstring{$p$}{p}-divisor of a foliation}

We begin by recalling some standard notions concerning derivations and foliations in positive characteristic; we refer the reader to \cite{mendson2022foliations} for further details.

Let \(k\) be a field of characteristic \(p>0\), let \(R\) be a finitely generated integral \(k\)-algebra, and let \(v\colon R\to R\) 
be a \(k\)-derivation.  Let \(v^{[i]}\) denote the \(i\)-fold composition of \(v\) (\(v^{[1]}=v\), \(v^{[i+1]}=v\circ v^{[i]}\)). We have that  \(v^{[p]}\) is again a \(k\)-derivation:  by the general Leibniz rule, and, since, for \(0<i<p\),  
\(\binom{p}{i}\equiv 0 \pmod p\), 
\[
    v^{[p]}(fg)
    =
    \sum_{i=0}^{p}
    \binom{p}{i}
    v^{[p-i]}(f)v^{[i]}(g)=v^{[p]}(f)g+fv^{[p]}(g).
\]

Every \(p\)-th power belongs to the kernel of~\(v\). More precisely, for every \(f\in R\),  \( v(f^p)=pf^{p-1}v(f)=0\).
Thus, \(v\) is naturally linear over the subring \(R^p\subset R\). Also, for every \(f\in R\),  
\[(fv)^{[p]} = f^pv^{[p]}-fv^{[p-1]}(f^{p-1})v\]
(see \cite[Lemma~1, p.~481]{Hochschild1955};  use \(U = \operatorname{End}_{\FF_p}(R)\), with \(V\) the collection of  operators given by multiplication by functions). In particular, for the rank-one distribution \(Rv\) generated by~\(v\),  
\[(fv)^{[p]}\equiv f^pv^{[p]} \mod Rv.\] 
Therefore, if one considers the class of \(v^{[p]}\) modulo \(Rv\), replacing \(v\) by another generator \(fv\) multiplies this class by \(f^p\). This will be the basic fact underlying the upcoming definition of the \emph{\(p\)-divisor} of a foliation in characteristic \(p\).

The key notion in the study of foliations in positive characteristic is the following:
\begin{definition}
Let \(X\) be a smooth algebraic variety over an algebraically closed field \(k\) of characteristic \(p>0\), and let \(\mathcal F\) be a foliation by curves on \(X\). We say that \(\mathcal F\) is \emph{\(p\)-closed} if, for every local generator \(v\) of \(T_{\mathcal F}\), the derivation \(v^{[p]}\) is again tangent to \(\mathcal F\), or, equivalently, if $v\wedge v^{[p]}=0$.
\end{definition}

The notion is equivalent to integrability:  \emph{If \(\mathcal F\) is a foliation on
\(X\), then \(\mathcal F\) is \(p\)-closed if and only if there exists a variety \(Y\) and a dominant rational map \(f\colon X\dashrightarrow Y\) such that, on a dense open subset where \(f\) is regular, $\mathcal F=\ker(Df)$} \cite[Prop.~1.9]{MR1468476}. In particular, \emph{a foliation on a smooth algebraic variety over an algebraically closed field of positive characteristic \(p\) is \(p\)-closed if and only if it is algebraically integrable.}

Let \(X\) be a smooth algebraic surface,   \(\mathcal{F}\) a foliation by curves on \(X\). Let \(\{(U_i,\omega_i,v_i)\}_{i\in I}\) be a local description of \(\mathcal F\), where, on \(U_i\), \(v_i\) generates \(T_{\mathcal F}\) and \(\omega_i\) generates \(N_{\mathcal F}^{*}\), so that, on \(U_i\cap U_j\), we have \(\omega_i=f_{ij}\omega_j\)  and   \(v_i=g_{ij}v_j\)  for \(f_{ij},g_{ij}\in\mathcal O_X^{*}(U_i\cap U_j)\).  By denoting by \(v\lrcorner \omega \) the contraction of the form \(\omega\) by the derivation \(v\), for each $i,j \in I$ we have, on \(U_i\cap U_j\), 
\[   v_{i}^{[p]}\lrcorner\omega_{i} = (g_{ij}v_{j})^{[p]}\lrcorner (f_{ij}\omega_{j}) = (g_{ij}^{p}v_{j}^{[p]}-g_{ij}v_{j}^{[p-1]}(g_{ij}^{p-1})v_{j}) \lrcorner (f_{ij}\omega_{j}) = (g_{ij}^{p}f_{ij})v_{j}^{[p]} \lrcorner \omega_{j}.\]
Thus, the collection $\{v_{i}^{[p]} \lrcorner \omega_{i}\}_{i\in I}$  determines a global section  \[s_{\mathcal{F}} \in H^{0}(X,K_{\mathcal{F}}^{\otimes p}\otimes N_{\mathcal{F}}),\] which vanishes identically if and only if \(\mathcal{F}\) is \(p\)-closed. If \(\mathcal{F}\) is not \(p\)-closed, we define its  \emph{$p$-divisor} \(\Delta_{\mathcal{F}}\) as the zero divisor of $s_{\mathcal{F}}$: \(\Delta_{\mathcal{F}}=(s_{\mathcal{F}})_0\in\mathrm{Div}(X)\). 

\begin{remark}\label{pdivP2}
If  $\mathcal{F}$ is a foliation of degree $d$ on $\PP_{k}^{2}$ which is not \(p\)-closed, $\deg(\Delta_{\mathcal F})=p(d-1)+d+2$, since,  for a foliation of degree $d$ on \(\PP^2_k\),  we have that $K_{\mathcal F}=\mathcal{O}(d-1)$ and $ N_{\mathcal F}=\mathcal{O}(d+2)$.\end{remark}

The following is a  fundamental property of the \(p\)-divisor of a foliation. Let \(X\) be a smooth algebraic surface over \(k\), let \(\mathcal F\) be a non-\(p\)-closed foliation on \(X\), and let \(C\subset X\) be an irreducible algebraic curve. \emph{If \(C\) is \(\mathcal F\)-invariant, then \(\operatorname{ord}_C(\Delta_{\mathcal F})>0\); conversely, if $p    \nmid \operatorname{ord}_C(\Delta_{\mathcal F})$, then \(C\) is \(\mathcal F\)-invariant}  \cite[Prop.~3.8]{mendson2022foliations}.

Recall that a morphism $f\colon X\to Y$ between smooth algebraic surfaces is \emph{étale} if for every \(x\in X\), the differential \(Df_x\colon T_xX\to T_{f(x)}Y\) is an isomorphism (see \cite[Sect.~4.3.2 or Prop.~3.2.6]{MR1917232} for some equivalent conditions). The \(p\)-divisor is well-behaved under étale morphisms:

\begin{proposition}\label{sec:etalepdivisor}
Let $f\colon X\to Y$ be an étale morphism between smooth algebraic surfaces over a field of characteristic \(p>0\). Let \(\mathcal{F}\) be a foliation on \(Y\), and assume that \(\mathcal{F}\) is not \(p\)-closed. Then \(f^*\mathcal F\) is not
\(p\)-closed and $\Delta_{f^*\mathcal F}=f^*\Delta_{\mathcal F}.$
\end{proposition}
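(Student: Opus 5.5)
The plan is to work locally, using the local description of the $p$-divisor, and to exploit that an étale morphism is a local isomorphism of tangent bundles compatible with derivations.

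First, I will define $f^*\mathcal F$. Let $\{(U_i,\omega_i,v_i)\}$ be a local description of $\mathcal F$ on $Y$. On $f^{-1}(U_i)$, set $\widetilde\omega_i=f^*\omega_i$. Since $Df$ is an isomorphism at every point, there is a unique derivation $\widetilde v_i$ on $f^{-1}(U_i)$ with $Df(\widetilde v_i)=v_i\circ f$. Equivalently, for functions $h$ on $U_i$ we have $\widetilde v_i(h\circ f)=(v_ih)\circ f$. This uses that $\Omega^1_X=f^*\Omega^1_Y$ for étale $f$, so a derivation on $X$ is determined by its values on pulled-back functions. The $\widetilde\omega_i$ still generate the conormal sheaf, because $f^*$ is injective on the fibers of $\Omega^1$. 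The $\widetilde v_i$ generate the tangent sheaf and have the same isolated zeros as the pullback. Both transform with the cocycles $f_{ij}\circ f$ and $g_{ij}\circ f$. This gives a local description of $f^*\mathcal F$, with $\widetilde v_i\lrcorner\widetilde\omega_i=0$.

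The key step is the identity $\widetilde v_i^{[p]}=\widetilde{(v_i^{[p]})}$, that is, $p$-th powers commute with the lift. Iterating $\widetilde v_i(h\circ f)=(v_ih)\circ f$ gives $\widetilde v_i^{[p]}(h\circ f)=(v_i^{[p]}h)\circ f$. Both $\widetilde v_i^{[p]}$ and the lift of $v_i^{[p]}$ are derivations (the first by the Leibniz computation recalled above). They agree on the pulled-back functions, so by the uniqueness statement they coincide.

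Then the contraction is functorial:
\[
\widetilde v_i^{[p]}\lrcorner f^*\omega_i=(v_i^{[p]}\lrcorner\omega_i)\circ f.
\]
So the section $s_{f^*\mathcal F}$ equals $f^*s_{\mathcal F}$ in $H^0(X,f^*(K_{\mathcal F}^{\otimes p}\otimes N_{\mathcal F}))$.

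Since $\mathcal F$ is not $p$-closed, $s_{\mathcal F}\not\equiv 0$. An étale morphism is flat, hence open and dominant on each component, so $f^*s_{\mathcal F}\not\equiv0$. Thus $f^*\mathcal F$ is not $p$-closed. Finally, the zero divisor of a pulled-back section is the pullback of the zero divisor, so $\Delta_{f^*\mathcal F}=f^*\Delta_{\mathcal F}$.

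The main obstacle is the uniqueness claim: that a derivation on $X$ is determined by its action on $f^{-1}\mathcal O_Y$. I will justify it through $\Omega^1_{X}\cong f^*\Omega^1_{Y}$. This holds because the relative differentials $\Omega_{X/Y}$ vanish for an unramified map, and smoothness of both surfaces makes the conormal sequence exact. Locally, the pullbacks of étale coordinates of $Y$ are then coordinates on $X$, and the identities above can be checked in them.
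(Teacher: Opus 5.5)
Your proposal is correct and follows essentially the same route as the paper: lift the local generator to the unique $f$-related derivation, note that $p$-th powers of $f$-related derivations are again $f$-related, deduce $s_{f^*\mathcal F}=f^*s_{\mathcal F}$, and conclude that the section is nonzero and that zero divisors pull back. The differences are minor. You identify the relevant line bundles through the pulled-back transition cocycles, whereas the paper uses $K_X=f^*K_Y$ and adjunction. You also spell out two points the paper leaves implicit: the uniqueness of lifts via $\Omega^1_X\cong f^*\Omega^1_Y$, and non-vanishing via flatness and openness.
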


\begin{proof}
Set $\mathcal G=f^*\mathcal F$ and let \(\omega\) be a local generator of \(N_{\mathcal F}^*\) on an open set
\(U\subset Y\). Since \(f\) is étale,
\(f^*\omega\) is a local generator of \(N_{\mathcal G}^*\) on
\(f^{-1}(U)\). Hence,
$N_{\mathcal G}=f^*N_{\mathcal F}$. Moreover, since \(f\) is étale, $K_X=f^*K_Y$, and by the adjunction formula for \(\mathcal F\) and \(\mathcal G\), we obtain that
$
K_{\mathcal G}+N_{\mathcal G}^*
=
K_X
=
f^*(K_{\mathcal F}+N_{\mathcal F}^*).
$
Since $N_{\mathcal G}^*=f^*N_{\mathcal F}^*$, it follows that $K_{\mathcal G}=f^*K_{\mathcal F}$. Let \(v\) be a local vector field defining \(\mathcal F\) on \(U\). Since
\(f\) is étale, there exists a unique vector field \(\widetilde v\) on
\(f^{-1}(U)\) that is \(f\)-related to \(v\): \(Df(\widetilde{v}) = v\). Consequently,
\(\widetilde v^{[p]}\) is \(f\)-related to \(v^{[p]}\). Therefore,  $\widetilde v^{[p]}\lrcorner(f^*\omega)
=
f^*( v^{[p]}\lrcorner\omega).$ We thus have  \(s_{\mathcal G}=f^*s_{\mathcal F}\). Since \(s_{\mathcal F}\neq0\), it follows that \(s_{\mathcal G}\neq0\); hence, \(\mathcal G\) is not \(p\)-closed. Taking zero divisors yields  \(\Delta_{\mathcal G}
    =
    (s_{\mathcal G})_0
    =
    (f^*s_{\mathcal F})_0
    =
    f^*(\Delta_{\mathcal F})\). 
\end{proof}

\begin{corollary}\label{cor:allequal} If \(\mathcal F\) is a non-\(p\)-closed foliation on \(X\), and $f: X\to X$ is an automorphism preserving \(\mathcal{F}\), the \(p\)-divisor of \(\mathcal{F}\) is \(f\)-invariant, $f^*\Delta_{\mathcal F}=\Delta_{\mathcal F}$. 
\end{corollary}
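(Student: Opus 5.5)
The plan is to apply Proposition~\ref{sec:etalepdivisor} to \(f\) itself. An automorphism is in particular étale, since its differential is everywhere an isomorphism, with inverse given by \(D(f^{-1})\). Saying that \(f\) preserves \(\mathcal F\) means that \(f^*\mathcal F=\mathcal F\) as foliations. This should be read as: \(f\) maps leaves to leaves, so that the pullback of a local \(1\)-form defining \(\mathcal F\) is again a local \(1\)-form defining \(\mathcal F\), up to a unit.

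Under these conditions, Proposition~\ref{sec:etalepdivisor} gives that \(f^*\mathcal F\) is not \(p\)-closed, and that
\[
\Delta_{f^*\mathcal F}=f^*\Delta_{\mathcal F}.
\]
Since \(f^*\mathcal F=\mathcal F\), the left-hand side is \(\Delta_{\mathcal F}\), which yields the claimed identity \(f^*\Delta_{\mathcal F}=\Delta_{\mathcal F}\).

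The only point that needs checking is that the \(p\)-divisor depends only on the foliation, not on the local data chosen to describe it. This is already built into its definition. If the local generators are changed, \(v_i\mapsto g v_i\) and \(\omega_i\mapsto h\omega_i\) with \(g,h\) units, then the formula preceding Remark~\ref{pdivP2} shows that the contraction \(v_i^{[p]}\lrcorner\omega_i\) is multiplied by \(g^p h\), which is again a unit. Hence the zero divisor of the section \(s_{\mathcal F}\) is unchanged.

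So the identification \(\Delta_{f^*\mathcal F}=\Delta_{\mathcal F}\) is legitimate, and no real obstacle is expected in this argument.
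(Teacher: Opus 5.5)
Your proposal is correct and is exactly the argument the paper intends: the corollary is stated without a separate proof, as an immediate consequence of Proposition~\ref{sec:etalepdivisor} applied to the automorphism \(f\) (which is étale) together with \(f^*\mathcal F=\mathcal F\), giving \(\Delta_{\mathcal F}=\Delta_{f^*\mathcal F}=f^*\Delta_{\mathcal F}\). Your check that the zero divisor of \(s_{\mathcal F}\) does not depend on the choice of local generators (the contraction only changes by the unit \(g^ph\)) is precisely the implicit well-definedness that justifies identifying \(\Delta_{f^*\mathcal F}\) with \(\Delta_{\mathcal F}\).
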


\subsection{The elliptic curve and its supersingularness} Over the algebraically closed field \(k\) of characteristic $p> 3$, we consider the complete elliptic curve \(E\) defined by \(\zeta^2 = 4\xi^3+1\), along with the derivation \(D_E\) on \(E\) induced by  \(\zeta\indel{\xi}+6\xi^2\indel{\zeta}\). The latter is nowhere zero on \(E\), as it is dual to the nowhere vanishing regular differential $\dd \xi/\zeta$. Since \(H^0(E,T_E)\) is one-dimensional, and generated by \(D_E\), there exists \(a_E\in k\) such that \(D_E^{[p]}=a_E D_E\).

\begin{proposition}\label{supersingularFp} With the previous notations, \(a_E=0\) if and only if $p\equiv 2\pmod 3.$
\end{proposition}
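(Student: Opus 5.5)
Identify \(a_E\) with the Hasse invariant of \(E\) and compute it explicitly. Since \(D_E^{[p]}=a_ED_E\), it suffices to evaluate both sides on a single function whose image under \(D_E\) is not identically zero. Take \(\xi\): then \(a_E\zeta=D_E^{[p]}(\xi)\). Rather than iterating \(D_E\) directly, I would work in a formal (or étale-local) parameter. Let \(t\) be a local parameter at a point of \(E\) with \(D_E=\frac{d}{dt}\cdot u(t)\). A cleaner route is to use the invariant differential \(\dd\xi/\zeta\), dual to \(D_E\). The standard fact then applies: for a nowhere-vanishing vector field \(D\) dual to the invariant differential, with formal group logarithm \(\ell(T)=\sum c_nT^n\), one has \(D^{[p]}=a D\) with \(a\) given, up to a \(p-1\)-st power normalization, by the coefficient governing \(p\)-power behaviour, namely the Hasse invariant. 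I would avoid quoting this and instead derive it by hand, as follows.

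\emph{Step 1 (reduction to a coefficient).} Write \(E\) affinely and use the function \(\xi\). Since \(D_E(\xi)=\zeta\) and \(D_E(\zeta)=6\xi^2\), the polynomial \(D_E^{[n]}(\xi)\) lies in \(k[\xi,\zeta]/(\zeta^2-4\xi^3-1)\). I would instead use the classical criterion: \(D_E^{[p]}=0\) if and only if the Cartier operator kills \(\dd\xi/\zeta\). This holds because the \(p\)-curvature of the dual vector field and the Cartier operator on the invariant differential are related by \(C(\omega)=a_E^{1/p}\omega\) (up to sign). The Cartier operator on \(\dd\xi/\zeta=\zeta^{p-1}\dd\xi/\zeta^p\) is computed from the coefficient of \(\xi^{p-1}\) in \((4\xi^3+1)^{(p-1)/2}\). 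That coefficient is the Hasse invariant.

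\emph{Step 2 (the coefficient).} The coefficient of \(\xi^{p-1}\) in \((4\xi^3+1)^{(p-1)/2}\) is nonzero only if \(3\mid p-1\). In that case it equals \(\binom{(p-1)/2}{(p-1)/3}4^{(p-1)/3}\), which is nonzero mod \(p\) because both entries are less than \(p\). Hence \(a_E=0\) if and only if \(p\equiv2\pmod 3\).

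\emph{Main obstacle.} The delicate step is the justification of the link between \(a_E\) and the Cartier/Hasse coefficient in Step 1. The computation in Step 2 is routine. If I wanted to avoid the Cartier operator, I would try instead to compute \(D_E^{[p]}(\xi)\) directly in the formal parameter \(w=\xi/\zeta\)-type expansion near the origin. There \(D_E=\frac{d}{dz}\), with \(z=\int\dd\xi/\zeta\) as a power series in a local parameter \(s\). One checks that \(\frac{d}{dz}^{[p]}\) equals \(a\,\frac{d}{dz}\), where \(a\) is read off from the coefficient of \(s^{p}\) in \(dz/ds\), which is again the Hasse coefficient. A sanity check for the \(p\equiv2\) case is independent: the automorphism \(\xi\mapsto\omega\xi\) acts on \(D_E\) trivially up to the unit \(\omega\), since it sends \(\zeta\mapsto\zeta\) and \(D_E\mapsto\omega^{-1}D_E\). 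Applying it to \(D_E^{[p]}=a_ED_E\) gives \(\omega^{-p}a_E=\omega^{-1}a_E\). So \(a_E=0\) whenever \(\omega^{p-1}\neq1\), that is, when \(p\equiv2\pmod3\). This gives one direction cleanly, and only the nonvanishing for \(p\equiv1\) needs the Hasse computation.
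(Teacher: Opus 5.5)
Your proposal is correct. Its skeleton matches the paper's: both arguments reduce to the coefficient of \(\xi^{p-1}\) in \((4\xi^3+1)^{(p-1)/2}\) and evaluate it in the same way. Your remark that \(\binom{(p-1)/2}{(p-1)/3}\) is a unit mod \(p\) because \((p-1)/2<p\) supplies a detail the paper leaves implicit.

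The difference lies in how \(a_E\) is tied to that coefficient. The paper passes through supersingularity: it quotes Mumford for ``\(a_E\neq0\) iff \(E\) has \(p\)-rank one'' and Silverman's criterion for the coefficient. You use Cartier's identity \(\langle C(\omega),D\rangle^p=\langle\omega,D^{[p]}\rangle-D^{p-1}\langle\omega,D\rangle\) with \(\omega=\dd\xi/\zeta\) and \(D=D_E\). Since \(\langle\omega,D_E\rangle=1\) and \(C(\omega)\) is again regular, this gives \(C(\omega)=a_E^{1/p}\omega\). Now write \(\omega=\zeta^{-p}(4\xi^3+1)^{(p-1)/2}\dd\xi\); the polynomial has degree \(\frac32(p-1)<2p-1\) and coefficients in \(\FF_p\). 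It follows that \(a_E\) is \emph{equal} to the coefficient of \(\xi^{p-1}\). This is sharper than the paper's statement and uses a single standard identity, with no detour through supersingularity.

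Your automorphism argument is a further, citation-free proof that \(p\equiv2\pmod3\) implies \(a_E=0\). The map \((\xi,\zeta)\mapsto(\omega\xi,\zeta)\) rescales \(D_E\) by a primitive cube root of unity \(c\). Conjugating \(D_E^{[p]}=a_ED_E\) by it yields \(a_E(c^{p-1}-1)=0\). As you note, it says nothing when \(p\equiv1\pmod 3\).

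There is one slip in your formal-parameter aside. Normalizing \(dz/ds=1+\cdots\), the relevant coefficient is that of \(s^{p-1}\) in \(dz/ds\), not of \(s^p\). To see this, apply the formula \((hv)^{[p]}=h^pv^{[p]}-hv^{[p-1]}(h^{p-1})v\) recalled in the paper, with \(v=\frac{d}{ds}\) and \(h=(dz/ds)^{-1}\). Together with Wilson's theorem this gives \(a=[s^{p-1}]\,h^{p-1}=[s^{p-1}]\,(dz/ds)\). The opening sentences about the formal logarithm ``up to a \((p-1)\)-st power normalization'' are vague and can simply be dropped, since the Cartier route is already complete.
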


\begin{proof} Both conditions are  equivalent to \(E\) being \emph{supersingular}, which is also equivalent to the  curve having \emph{Hasse invariant}  (also called \emph{\(p\)-rank}) equal to~\(0\)  (see~\cite[Ch.~V]{silverman}, \cite[\S  15]{MumfordAbelian74} for definitions and equivalences).

We have that \(H^0(E,T_E)\) is one-dimensional, and generated by \(D_E\), which is, moreover, invariant by left translations. In this  case, the \(p\)-rank (or {Hasse invariant}) of \(E\) equals the rank of the \(p\)-th power map on \(H^0(E,T_E)\), and  thus the \(p\)-rank of \(E\) is \(0\) if \(a_E=0\), and \(1\) if \(a_E\neq 0\) (see~\cite[\S  14--15]{MumfordAbelian74}).

We have the following criterion \cite[Thm~4.1,  Ch.~V]{silverman}:  \emph{The elliptic curve
\(\zeta^2=f(\xi)\) with \(f\) a cubic polynomial in \(\FF_p [\xi]\) with different roots in \(k\) has Hasse invariant equal to \(0\) if and only if the coefficient of \(\xi^{p-1}\) in \(f(\xi)^{(p-1)/2}\) is zero}.  Let us calculate this in our case (cf.~\cite[Ex.~4.4, Ch.~V]{silverman}). Set \(m=(p-1)/2\). By the binomial theorem,
	\[
	(4\xi^3+1)^m
	=
	\sum_{j=0}^{m}\binom{m}{j}4^j \xi^{3j}.
	\]
The monomial \(\xi^{p-1}\) occurs in this expression if and only if $3j=p-1$ for some~$j$, and thus the coefficient of $\xi^{p-1}$  is zero if \(p\equiv 2 \pmod{3}\); if \(p\equiv 1 \pmod{3}\), the   coefficient of  $\xi^{p-1}$ in the above expression,
	\[\binom{\frac{p-1}{2}} {\frac{p-1}{3}}\,4^{\frac{p-1}{3}},\]
does not vanish. \end{proof}

The condition in Proposition~\ref{supersingularFp} is also the one behind the irrationality of the third roots of unity in \(k\):

\begin{lemma}\label{winfp}
The Galois field \(\FF_p\) has a primitive cubic root of unity if and only if $p\equiv 1 \mod 3$. \end{lemma}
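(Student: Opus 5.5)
The plan is to use the structure of the multiplicative group \(\FF_p^*\), which is cyclic of order \(p-1\). A primitive cubic root of unity in \(\FF_p\) is an element of order exactly \(3\) in \(\FF_p^*\). Since a finite cyclic group contains an element of order \(3\) if and only if \(3\) divides its order, \(\FF_p\) contains such an element if and only if \(3\mid p-1\), that is, \(p\equiv 1\pmod 3\).

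If one prefers to avoid citing the cyclicity of \(\FF_p^*\), an alternative argument follows the paper's \(p>3\) convention, in which \(p\equiv 0\pmod 3\) does not occur. For \(p\equiv 2\pmod 3\), any \(w\in\FF_p\) with \(w^3=1\) also satisfies \(w^{p-1}=1\) by Fermat's little theorem. Since \(\gcd(3,p-1)=1\), this forces \(w=1\), so there is no primitive cube root. For \(p\equiv 1\pmod 3\), Cauchy's theorem applied to the abelian group \(\FF_p^*\) of order divisible by \(3\) gives an element of order \(3\), which is a primitive cube root of unity.

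The only real input is the existence direction, which needs either cyclicity or Cauchy's theorem. Both are standard, so there is no serious obstacle. The small cases \(p=2,3\) can be checked directly if the statement is meant for all primes. For \(p=2\) we have \(2\equiv 2\pmod 3\) and the only element with \(w^3=1\) is \(w=1\). For \(p=3\) we have \(x^3-1=(x-1)^3\), so again only \(w=1\) occurs, consistent with \(3\not\equiv 1\pmod 3\).
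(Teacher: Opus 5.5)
Your proof is correct and is essentially the paper's argument: a primitive cube root of unity is an element of order three in \(\FF_p^*\), a group of order \(p-1\), and Cauchy's theorem (or, in your main version, cyclicity) gives existence when \(3\mid p-1\). Your Fermat/\(\gcd\) argument for \(p\equiv 2\pmod 3\) spells out the Lagrange-type converse, which the paper's single appeal to Cauchy's theorem leaves implicit.
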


\begin{proof} A primitive cubic root of unity is an element of order three in the multiplicative group of the field; since  \(\FF_p^*\) has order \(p-1\), by Cauchy's theorem, it has an element of order three if and only if  \(p-1\) is divisible by~\(3\). \end{proof}

\subsection{The family $\mathcal{F}^4$} In this section we prove Theorem~\ref{thm:charp}. We work over an algebraically closed field \(k\) of characteristic \(p>3\).  Since \(p\neq 3\), \(k\) has primitive cubic roots of unity. Let $\mathcal{H} \subset \PP_{k}^2$ be the union of lines of the dual Hesse configuration~(\ref{arr:hesse}). As in the complex case, the most general foliation of degree four of \(\PP^2_k\) that is tangent to \(\mathcal{H}\) is, for \(\alpha\in \PP^1_k\), the foliation  \(\mathcal{F}^4_\alpha\)  defined by the form of Eq.~(\ref{eq:f4linsneto}). The description of   \(\mathcal{F}^4\) of Theorem~\ref{thm:ln-desc}  carries over to this setting:

\begin{theorem}\label{sec:main}
Let \(k\) be an algebraically closed field of characteristic \(p>3\), and let \(E\) be the elliptic curve defined locally by  \(\zeta^2=4\xi^3+1\), with the derivation \(D_E\)   for which \(D_E(\xi)=\zeta\). Let \(D_0\) and \(D_1\) be the derivations on \(E\times E\) induced by \(D_E\) on the first and second factors.  For \(\lambda\in k\), let \(\mathcal G_\lambda\) be the foliation on \(E\times E\) generated by \(D_0+\lambda D_1\).

For \(\alpha\in k\), let \(\lambda\in k\) be given by (\ref{eq:slope}) for a primitive cubic root of unity \(\omega\in k\). Then, \(\mathcal F^4_\alpha\) is birationally equivalent to the  quotient  of \(\mathcal G_\lambda\)  under  the action of the transformation of order three given by the square of (\ref{rho-alg}). In particular, \(\mathcal F^4_\alpha\) is algebraically integrable if and only if so is \(\mathcal G_\lambda\).
\end{theorem}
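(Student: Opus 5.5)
The plan is to transport the complex construction of Section~\ref{sec:deg4} verbatim to \(k\), checking that every ingredient there is purely algebraic with integer coefficients (in \(\ZZ[\omega,1/6]\), say) and hence survives reduction to characteristic \(p>3\). Concretely, over \(k\) I will consider the same homogeneous vector fields \(X_4,Y_4\), their combinations \(Z_0=\frac13X_4+Y_4\), \(Z_1=(1+2\omega)Y_4\), the polynomials \(H,p_0,p_1,q_0,q_1\), the surface \(\Sigma_4'=H^{-1}(1)\), its quotient \(\Sigma_4\) by \(\eta^3\), and the map \(F=(p_0,q_0,p_1,q_1)\colon\Sigma_4\to E\times E\). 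The identities needed are polynomial identities over \(\ZZ[\omega,\frac13]\): \([X_4,Y_4]=0\), \(Z_{1-j}p_j=0\), \(q_j=Z_jp_j\), \(q_j^2=4p_j^3+H^2\), and Eq.~(\ref{z0z1linind}). Since they hold over \(\CC\) with coefficients in this ring, they hold after reduction modulo any prime above \(p>3\) (with \(1+2\omega\) invertible because \((1+2\omega)^2=-3\)). Thus \(F\) lands in \(E\times E\) (using \(H=1\) on \(\Sigma_4'\)), intertwines \(Z_j\) with \(D_j\) (because \(Z_j\) acts on \(p_j,q_j\) as \(D_j\) acts on \(\xi_j,\zeta_j\), and kills \(p_{1-j},q_{1-j}\)), and is equivariant for \(\overline\eta\) and \(\rho^2\). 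The statement that \(Z_0+\lambda Z_1\) induces \(\mathcal F^4_\alpha\) with \(\alpha\) and \(\lambda\) related by~(\ref{eq:slope}) is again an identity between polynomial forms.

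The key step is birationality of \(\overline F\), that is, Lemma~\ref{lemma:ratfun} over \(k\). Rather than the generic-point argument, I would use the explicit inverse~(\ref{bir_inverse_of_F}). The identity stating that substituting the three rational functions of Lemma~\ref{lemma:ratfun} into~(\ref{bir_inverse_of_F}) returns \((x:y:z)\) is a rational identity over \(\ZZ[\omega,\frac16]\). Clearing denominators, it reduces modulo \(p\) to an identity in \(k(x,y,z)\), provided the denominators involved (\(x^3-y^3\), \((x-y)(y-z)(z-x)\), \(x^2+xy+y^2\), and the resulting homogeneous components) do not vanish identically mod \(p\). For these explicit polynomials this is clear. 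Hence \(k(\PP^2)\) is generated by the three functions, and \(\overline F\colon\PP^2\dashrightarrow M\) is birational. On the other side, I must check that \(\phi\colon(\xi_0,\zeta_0,\xi_1,\zeta_1)\mapsto(\xi_0/\xi_1,\zeta_0,\zeta_1)\) realizes the quotient of \(E\times E\) by \(\rho^2\) in characteristic \(p\). The degree count goes through: \(\zeta\) has degree \(3\) on \(E\), and \(p\neq3\) makes the extension \(k(\xi)/k(\zeta)\), given by \(\xi^3=(\zeta^2-1)/4\), separable Galois with group generated by \(\xi\mapsto\omega\xi\). So \(k(E\times E)^{\rho^2}\) is generated by \(\zeta_0,\zeta_1,\xi_0/\xi_1\), by a Kummer-theory argument on \(k(\zeta_0,\zeta_1)(\xi_0,\xi_1)\), whose Galois group is \((\ZZ/3)^2\).

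Combining these, \(\overline F\) identifies \(\mathcal F^4_\alpha\) with the image in \((E\times E)/\langle\rho^2\rangle\) of the foliation \(\mathcal G_\lambda\) generated by \(D_0+\lambda D_1\). For the last assertion, the quotient map \(E\times E\to(E\times E)/\langle\rho^2\rangle\) is finite, separable, and generically étale, since \(p\nmid3\). A rational first integral downstairs pulls back to one upstairs. Conversely, averaging (e.g.\ taking the norm, or the elementary symmetric functions) of a nonconstant first integral of \(\mathcal G_\lambda\) over the \(\rho^2\)-orbit gives a nonconstant invariant first integral, because \(\rho^2\) preserves \(\mathcal G_\lambda\), mapping \(D_j\) to \(\omega^{\pm1}D_j\) and so preserving their linear span. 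Equivalently, \(p\)-closedness is local and is preserved and reflected by the étale locus, by Proposition~\ref{sec:etalepdivisor}.

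The main obstacle is the reduction step for Lemma~\ref{lemma:ratfun}. I must make sure that the explicit inverse~(\ref{bir_inverse_of_F}) stays nondegenerate mod \(p\), with no coefficient in \(\ZZ[\omega,\frac16]\) that could collapse all three coordinates. If a denominator or the relevant Jacobian could vanish mod \(p\), I would fall back on the original argument of Lemma~\ref{lemma:ratfun}, checking at a specific point such as \((0:1:2)\) that \(\beta_0\neq\beta_1\) still holds mod \(p\), possibly choosing another point for small primes.
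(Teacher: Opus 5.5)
Your proposal is correct and is essentially the paper's argument. The paper simply observes that all objects and identities of Section~\ref{sec:deg4} are defined over \(\ZZ[\omega,1/3]\) and persist for \(p>3\): \(Z_0,Z_1\) stay independent since \((1+2\omega)^2=-3\), the quotient by \(\eta\) and the map \(F\) are \'etale, and (\ref{bir_inverse_of_F}) remains valid. Your worry about (\ref{bir_inverse_of_F}) degenerating is settled by evaluating it at \((0:1:2)\): no denominator vanishes there, and it returns \((0:108:216)\), which is nonzero for \(p>3\).

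One caution for the last assertion: in characteristic \(p\) every \(p\)-th power is annihilated by \(D_0+\lambda D_1\), so ``nonconstant first integral'' must be read as ``first integral with nonzero differential.'' Your \(p\)-closedness argument via Proposition~\ref{sec:etalepdivisor} handles this directly. The elementary symmetric functions of the \(\rho^2\)-orbit also work, since for \(p>3\) not all of them can lie in \(k(E\times E)^p\).
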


This result can be established through a careful reading of the proof of Theorem~\ref{thm:ln-desc}, which applies  essentially word by word to this setting, as all the objects in that proof, being  defined over \(\ZZ[\omega,1/3]\), are well-defined over \(k\), and all the  algebraic and differential relations among them are still valid. For instance, from (\ref{z0z1linind}), since \(1+2\omega\neq 0\) (for \((1+2\omega)^2=-3\)), the derivations \(Z_0\) and \(Z_1\) are linearly independent at every point of~\(\Sigma_4'\); the quotient under the order-nine map \(\eta\) of Eq.~(\ref{quot:ordernine}) is étale, as its order and  \(\mathrm{char}(k)\) are relatively prime; the map \(F\) is étale, as it maps   \(Z_0\) and \(Z_1\) to the everywhere linearly independent derivations \(D_0\) and~\(D_1\); and so on. The formula of Eq.~(\ref{bir_inverse_of_F}) for the birational inverse of \(F\) is still valid.

Observe that, in this proof, \(F\) is étale, as it maps the  derivations \(Z_0\) and \(Z_1\) to the everywhere linearly independent derivations \(D_0\) and \(D_1\). The formula of Eq.~(\ref{bir_inverse_of_F}) for the birational inverse of \(F\) is still valid.

The transformations of Theorem~\ref{thm:bir-f4} give birational transformations of \(\PP^2_k\) that act upon the foliations as described by formula~(\ref{act.lins.par}). Theorem~\ref{sec:main} implies that these   are induced by symmetries of \(E\times E\).

\begin{proof}[Proof of Theorem~\ref{thm:charp}] The foliation $\mathcal{F}_{\infty}^4$ is algebraically integrable since it has the rational first integral $(y^3-z^3)/(x^3-z^3)$, and this case may be left aside.
By Theorem~\ref{sec:main}, \(\mathcal{F}^4_\alpha\) is algebraically integrable if and only if so is the corresponding  \(\mathcal{G}_\lambda\). Since  algebraic integrability is equivalent to \(p\)-closedness, it suffices to determine the conditions under which \(\mathcal{G}_\lambda\) is \(p\)-closed for \(\lambda\in k\). We have that \(D_E^{[p]}=a_E D_E\) for some \(a_E\in k\), and thus $D_0^{[p]}=a_E D_0$, and $D_1^{[p]}=a_E D_1$.
Since \([D_0,D_1]=0\), and since, for \(0<i<p\),  
\(\binom{p}{i}\equiv 0 \pmod p\),  we have
\begin{equation}\label{dlp}D_\lambda^{[p]}=
(D_0+\lambda D_1)^{[p]}
=
\sum_{i=0}^{p}
\binom{p}{i}
\lambda^iD_0^{[p-i]}D_1^{[i]}=D_0^{[p]}+\lambda^pD_1^{[p]}= a_E(D_0+\lambda^pD_1).
\end{equation}
Consequently,
\[
    \begin{aligned}
    D_\lambda\wedge D_\lambda^{[p]}
    &=
    a_E(D_0+\lambda D_1)
    \wedge
    (D_0+\lambda^pD_1)  =
    a_E(\lambda^p-\lambda)D_0\wedge D_1.
    \end{aligned}
\]
Since \(D_0\wedge D_1\) does not vanish, \(\mathcal{G}_\lambda\) is \(p\)-closed if and only if
$a_E=0$ or $\lambda\in\FF_p$. Thus, with Proposition~\ref{supersingularFp}:
\begin{itemize}
\item if \(p\equiv2\pmod{3}\), \(a_E=0\), and  \(\mathcal{F}^4_\alpha\) is \(p\)-closed (independently of the value of \(\alpha\)); and
\item if $p\equiv1\pmod{3}$, \(a_E\neq 0\); since \(\omega\in\FF_p\) (Lemma~\ref{winfp}), from relation (\ref{eq:slope}), \(\lambda\in \FF_p\) if and only if \(\alpha\in\FF_p\), and \(\mathcal{F}^4_\alpha\) is \(p\)-closed if and only if \(\alpha\in\FF_p\). 
\end{itemize}
This establishes the result.
 \end{proof}

In the second case, explicit first integrals for the corresponding foliations may be constructed  in the spirit of Section~\ref{sec:algo}: the transformation \(U_4\) will act transitively on the slopes in \(\FF_p^*\cup\{\infty\}\), and the first integral of \(\mathcal{F}^4_\infty\) may be pulled back by a convenient power of \(U_4\) in order to give the first integral of a given foliation with such a slope.
 
Even in the case where there is no rational first integral, we can describe all the invariant algebraic curves of a foliation in \(\mathcal{F}^4\) (compare with Corollary~\ref{int:cond}):
\begin{theorem}\label{thm:p-div} If $p\equiv 1 \pmod 3$ and $\alpha \notin \FF_p$,  the $p$-divisor of $\mathcal{F}_\alpha^4$ is
\begin{equation}\label{p-divisor-f4}
    \Delta_{\mathcal{F}_\alpha^4} = \frac{p+2}{3}\mathcal{H}.
\end{equation}
In particular, the only irreducible invariant curves of $\mathcal{F}_\alpha^4$ are the components of $\mathcal{H}$.  
\end{theorem}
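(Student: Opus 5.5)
We will compute the \(p\)-divisor of \(\mathcal{F}^4_\alpha\) by pulling everything back to \(\Sigma_4'\), where \(\mathcal{F}^4_\alpha\) is generated by a vector field whose \(p\)-th power is known explicitly. First, the support. By the proof of Theorem~\ref{thm:charp}, with \(p\equiv 1\pmod 3\) and \(\alpha\notin\FF_p\) we have \(a_E\neq 0\) and \(\lambda\notin\FF_p\). On \(E\times E\), formula (\ref{dlp}) gives \(D_\lambda\wedge D_\lambda^{[p]}=a_E(\lambda^p-\lambda)D_0\wedge D_1\). This is a nowhere vanishing section, so \(\Delta_{\mathcal{G}_\lambda}=0\).

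Next we transport this to the plane. The composition \(\Sigma_4'\to\Sigma_4\xrightarrow{F}E\times E\) is étale, the maps \(Z_i\) and \(D_i\) correspond, and the quotient \(\Sigma_4'\to\PP^2\setminus\mathcal{H}\) by the order-nine map \(\eta\) is étale because \(9\) is prime to \(p\). Applying Proposition~\ref{sec:etalepdivisor} to these maps (on the open sets where \(F\) is an isomorphism onto its image, and using the fact that \(F\) is étale), we get \(\Delta_{\mathcal{F}^4_\alpha}\cap(\PP^2\setminus\mathcal{H})=0\). Hence \(\Delta_{\mathcal{F}^4_\alpha}\) is supported on \(\mathcal{H}\).

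Then the multiplicities. The nine lines are permuted transitively by the linear symmetries \(L_4,L_4',T_4,B_4\) of the dual Hesse configuration. Some of these move \(\alpha\), so we restrict to the subgroup fixing each foliation. By Proposition~\ref{prop:non-effective}, this subgroup is generated by \(J\), \(L_4\) and \(L_4'\), and it acts transitively on the nine lines: the translations \((\ZZ/3\ZZ)^2\) act simply transitively on each block of three parallel lines, and \(J\) mixes the blocks. We would double-check this transitivity claim first. If it fails, we instead compute the order along each line directly in a local chart, for instance along \(x=y\) with the vector field \(X_4+3\alpha Y_4\) restricted to an affine chart. Granting transitivity, Corollary~\ref{cor:allequal} gives \(\Delta_{\mathcal{F}^4_\alpha}=m\mathcal{H}\) for a single integer \(m\). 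Remark~\ref{pdivP2} with \(d=4\) gives \(\deg\Delta=3p+6\), so \(9m=3p+6\) and \(m=(p+2)/3\), which is an integer since \(p\equiv1\pmod3\).

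Finally, the invariant curves. Every invariant irreducible curve lies in the support of \(\Delta\), hence is one of the nine lines. The lines are invariant by construction.

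The main obstacle is the support step. Proposition~\ref{sec:etalepdivisor} is stated for étale morphisms between surfaces, but \(F\) is only birational onto an open set, and the map \(\Sigma_4'\to\PP^2\) is only étale over \(\PP^2\setminus\mathcal{H}\). We therefore need to know that \(F\), a finite-type étale map, has image avoiding nothing relevant. This is a local statement: the sections \(s\) pull back, so vanishing on \(\PP^2\setminus\mathcal{H}\) would force vanishing of \(s_{\mathcal{G}_\lambda}\) at some image point, which cannot happen. With this observation the step should go through.
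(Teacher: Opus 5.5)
Your proof has the same structure as the paper's. The \(p\)-divisor is empty upstairs, étale descent gives \(\operatorname{Supp}\Delta_{\mathcal{F}^4_\alpha}\subseteq\mathcal{H}\), and a symmetry argument plus \(9m=3p+6\) gives \(m=\frac{p+2}{3}\). The paper computes \(Z_\lambda\wedge Z_\lambda^{[p]}=a_E(\lambda^p-\lambda)Z_0\wedge Z_1\) directly on \(\Sigma_4\) and applies Proposition~\ref{sec:etalepdivisor} only to \(\pi\); your detour through \(\Delta_{\mathcal{G}_\lambda}=0\) is equivalent.

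What you call the main obstacle is not one. \(F\colon\Sigma_4\to E\times E\) is an étale morphism on all of \(\Sigma_4\), and Proposition~\ref{sec:etalepdivisor} requires neither injectivity nor surjectivity, so \(s_{F^*\mathcal{G}_\lambda}=F^*s_{\mathcal{G}_\lambda}\) is nowhere zero. Surjectivity matters only for the finite étale map \(\pi\colon\Sigma_4\to\PP^2_k\setminus\{H=0\}\), where it holds. Working only on the open set where \(F\) is an isomorphism onto its image would not suffice, since that set could miss curves in the complement of \(\mathcal{H}\).

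The step whose justification is wrong is transitivity. \(\langle L_4,L_4'\rangle\) does not preserve the three blocks given by the cubic factors of (\ref{arr:hesse}). \(L_4\) permutes them cyclically, since it cycles their concurrency points \((0:0:1)\), \((1:0:0)\), \((0:1:0)\). \(L_4'\) fixes these points and cycles the three lines through each. Nor does \(J\) mix all the blocks: it preserves \(\{x^3=y^3\}\) and swaps the other two. So with the mechanism you describe, the orbits would have sizes \(3\) and \(6\), not \(9\).

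The claim you need is nevertheless true. The group \(\langle L_4,L_4'\rangle\cong(\ZZ/3\ZZ)^2\) alone acts simply transitively on all nine lines, not on each block. This is exactly what the paper uses, and \(J\) is superfluous.
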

\begin{proof} By Theorem~\ref{sec:main}, the vector fields \(Z_i\) and \(D_i\) are \(F\)-related.
Since \(D_0\) and \(D_1\) are linearly independent at every point of
\(E\times E\), the vector fields \(Z_0\) and \(Z_1\) are linearly independent
at every point of \(\Sigma_4\). In particular,
$Z_0\wedge Z_1$ does not vanish on \(\Sigma_4\).

Since \(p\equiv1\pmod 3\), \(\omega\in \FF_p\) (Lemma~\ref{winfp}). Since  \(\alpha\notin\FF_p\) and is \(\lambda\) given by (\ref{eq:slope}), \(\lambda\notin\FF_p\). From (\ref{dlp}), since \(F\) is étale and \(Z_i\) is \(F\)-related to
\(D_i\), it follows that  \(Z_\lambda^{[p]}
    =
    a_E (Z_0+\lambda^pZ_1 )\), 
and  \(Z_\lambda\wedge Z_\lambda^{[p]} = a_E(\lambda^p-\lambda)Z_0\wedge Z_1\). Since \(a_E\neq0\), \(\lambda^p-\lambda\neq0\), and \(Z_0\wedge Z_1\) is nowhere vanishing,   we conclude that \(Z_\lambda\wedge Z_\lambda^{[p]}\) does not vanish on~\(\Sigma_4\). Thus the, \(p\)-divisor of the pull-back
foliation \(\pi^*\mathcal F^4_\alpha\) is empty.

Let \(U=\PP^2_k\setminus\{H=0\}\). Since  \(\pi\colon\Sigma_4\to U\)
is finite étale, Proposition~\ref{sec:etalepdivisor} gives $s_{\pi^*\mathcal F^4_\alpha}
=
\pi^*(s_{\mathcal F^4_\alpha}|_U)$. Hence, $s_{\mathcal F^4_\alpha}|_U$ has no zeros, and
    \(\operatorname{Supp}(\Delta_{\mathcal F^4_\alpha})
    \subseteq \{H=0\}\).

The group of order nine of linear transformations of \(\PP^2_k\) generated by the commuting maps of order three $L_4$ and $L_4'$ (as given in the statement of Theorem~\ref{thm:bir-f4})  preserves each
foliation \(\mathcal F^4_\alpha\) individually. Moreover, it acts
transitively on the nine lines of the dual Hesse configuration:  \(L_4\) permutes the three blocks of three lines each given by the cubic factors of~(\ref{arr:hesse}), whereas \(L_4'\)  preserves each block while permuting the three lines contained in it. Hence, by Corollary~\ref{cor:allequal}, \(\Delta_{\mathcal F^4_\alpha}=m\mathcal H\) for some  integer \(m\geq1\). Since $\deg(\mathcal H)=9$, and, by Remark~\ref{pdivP2}, $\deg(\Delta_{\mathcal F_\alpha^4})=3p+6$, we have that $m= \frac{1}{3}(p+2)$,   establishing (\ref{p-divisor-f4}).

It remains to identify the invariant curves. By the basic property of the \(p\)-divisor, every irreducible algebraic curve invariant by \(\mathcal F_\alpha^4\) must occur in the support of \(\Delta_{\mathcal F_\alpha^4}\). Hence, any such curve is one of the irreducible components of \(\mathcal H\). Conversely, the definition, \(\mathcal F_\alpha^4\) is tangent to the dual Hesse configuration \(\mathcal H\). Therefore, the irreducible invariant algebraic  curves of \(\mathcal F_\alpha^4\) are precisely the nine components of~\(\mathcal H\).
\end{proof}

\begin{remark}[In characteristics $2$ and $3$] From Eq.~(\ref{eq:f4linsneto}) we have that, in the affine chart $z=1$, for \(\alpha\neq \infty\), the foliation \(\mathcal{F}^4_\alpha\) is generated by
\[v_\alpha=(x^3-1)(x-\alpha y^2)\del{x}+(y^3-1)(y-\alpha x^2)\del{y} ,
\]
while \(\mathcal{F}^4_\infty\) is given by \[v_\infty=(x^3-1)y^2\del{x}+(y^3-1)x^2\del{y}.\]
	
In characteristic $2$, every one of the foliations is $2$-closed, since
\begin{align*}
	v_\alpha^{[2]} & =(1+\alpha x^2y^2)v_\alpha, 
 & 
	v_\infty^{[2]} & =x^2y^2\,v_\infty.
\end{align*}

In characteristic $3$, only one member of the family is $3$-closed. Setting $Q=xy+xz+yz$ and $L=x+y+z$, the homogenized coefficient of $v_\alpha\wedge v_\alpha^{[3]}$ is
\[(\alpha-1)^3(x-y)^3(x-z)^3(y-z)^3[\alpha Q^3+xyz L^3],\]
with the term in the square bracket becoming $Q^3$ for $\alpha=\infty$. The cubes reflect the degeneration $x^3-z^3=(x-z)^3$ of the dual Hesse arrangement in characteristic $3$. Since $\alpha^3-1=(\alpha-1)^3$, the foliation $\mathcal F^4_\alpha$ is $3$-closed if and only if $\alpha=1$. In particular, $\mathcal F^4_\infty$ is not $3$-closed and
\[\Delta_{\mathcal F^4_\infty}=3\{x=y\}+3\{x=z\}+3\{y=z\}+3\{Q=0\}.\]
This is the only characteristic in which $\mathcal F^4_\infty$ fails to be $p$-closed.
\end{remark}

\subsection{The \texorpdfstring{\(p\)}{p}-divisor for foliations in the other families}
The families of foliations of degrees two and three may be defined in the same way as in the complex case, through the rational maps described in Sections~\ref{sec:deg3} and~\ref{sec:deg2}. After restricting to suitable open subsets, the family of degree four  is
obtained from the family of degree three  by an étale pull-back, whereas the  family of degree two is birationally identified with the degree-three family
through a Cremona transformation, and the criterion established for the family of degree four also detects the \(p\)-closedness of these families.
The ramification and exceptional divisors of the corresponding maps account
for the different multiplicities that appear in their \(p\)-divisors.

Assume that \(p>3\), \(p\equiv1\pmod{3}\), and that \(\alpha\notin \FF_p\). Then the foliations \(\mathcal F^3_\alpha\) and \(\mathcal F^2_\alpha\) are not \(p\)-closed, and their \(p\)-divisors are given by
\[
\Delta_{\mathcal F^3_\alpha}
=
\frac{p+5}{6}C_1
+
\frac{p+2}{3}
 (C_2+l_0+l_1+l_2),
\]
for the invariant curves given by the Eqs.~(\ref{triangle_f3}) and~(\ref{conics_f3}), and
\[
\Delta_{\mathcal F^2_\alpha}
=
\frac{p+5}{6}Q
+
\frac{p+2}{3}L,
\]
where  \(Q\) is the quartic of Eq.~(\ref{tricuspid}), and \(L\) is  \(z=0\), the only bitangent of the latter.

\subsection{Reduction modulo \texorpdfstring{\(p\)}{p}}

Let \(\mathcal F\) be a foliation on \(\PP^2_{\CC}\) of degree \(d\), defined by a homogeneous projective \(1\)-form 
    \(\omega=A\,\dd x+B\,\dd y+C\,\dd z\), 
    \(A,B,C\in\CC[x,y,z]_{d+1}\).
Choose a finitely generated \(\ZZ\)-subalgebra $R\subset\CC$ containing the coefficients of \(A\), \(B\), and \(C\). After replacing \(R\) by a localization, we may assume that \(\omega\) extends to a relative projective
\(1\)-form,
\[
   \omega_R
    \in
    H^0\!\left(
        \PP^2_R,
        \Omega^1_{\PP^2_R/R}(d+2)
    \right),
\]
whose reduction at every closed point of \(\operatorname{Spec}(R)\) defines a
foliation of degree \(d\).

For a closed point \(\mathfrak p\in\operatorname{Spec}(R)\), let $ \kappa(\mathfrak p)=R/\mathfrak p$
be its finite residue field, and let $\ell=\operatorname{char}\kappa(\mathfrak p)$ be its residue characteristic. The reduction of \(\omega_R\) modulo
\(\mathfrak p\) defines a foliation $\mathcal F_{\mathfrak p}$ on \(\PP^2_{\kappa(\mathfrak p)}\subset \PP^2_{\overline{\kappa(\mathfrak p)}}\). We   say that
\(\mathcal F_{\mathfrak p}\) is \emph{\(\ell\)-closed} if its base change to an
algebraic closure of \(\kappa(\mathfrak p)\) is \(\ell\)-closed.

The following result shows that \(p\)-closedness for infinitely many reductions is substantially weaker than algebraic integrability in characteristic zero.

\begin{theorem}\label{thm:infinitely-many-p-closed-reductions}
Let \(\alpha\in \overline{\QQ}\setminus \QQ(\omega)\). Consider the complex foliation \(\mathcal{F}_\alpha^4\). For
every  closed point \(\mathfrak p\)   over 
\(R=\ZZ[\alpha]\) whose residue characteristic \(\ell\) is such that  
    \(\ell>3\) and
    \(\ell\equiv2\pmod 3\), 
the reduction modulo $\mathfrak p$ of the complex foliation \(\mathcal{F}_\alpha^4\) is \(\ell\)-closed. Moreover, the set of such closed points \(\mathfrak p\) is Zariski-dense in \(\operatorname{Spec}(R)\).
\end{theorem}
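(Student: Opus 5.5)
The plan is to reduce everything to Theorem~\ref{thm:charp}. Let \(\mathfrak p\) be a closed point of \(\operatorname{Spec}(R)\), \(R=\ZZ[\alpha]\), with residue characteristic \(\ell>3\) and \(\ell\equiv 2\pmod 3\). Reducing the form (\ref{eq:f4linsneto}) modulo \(\mathfrak p\) requires care because \(\omega\) appears in it, while \(R=\ZZ[\alpha]\) need not contain \(\omega\).

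\emph{First step: make sense of the reduction.} The form (\ref{eq:f4linsneto}) itself has coefficients in \(\ZZ[\alpha]\): only \(\alpha\) appears, and \(\omega\) enters only through the Hesse configuration and the dictionary (\ref{eq:slope}). Its reduction is therefore the form (\ref{eq:f4linsneto}) with parameter \(\bar\alpha\in\kappa(\mathfrak p)\), the image of \(\alpha\). After base change to \(k=\overline{\kappa(\mathfrak p)}\), which has characteristic \(\ell>3\), this is exactly the foliation \(\mathcal F^4_{\bar\alpha}\) on \(\PP^2_k\) of Theorem~\ref{thm:charp}. One must check that the reduction is still a foliation of degree four, i.e.\ that the reduced form does not acquire a common factor. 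Over \(k\) this holds because (\ref{eq:f4linsneto}) defines a saturated degree-four foliation for every \(\bar\alpha\in k\), as asserted in the paper in positive characteristic. If necessary, one shrinks \(R\) by a localization as in the setup, which removes only finitely many primes and so does not affect the density claim.

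\emph{Second step: apply Theorem~\ref{thm:charp}.} Since \(\ell\equiv 2\pmod 3\), the first case of that theorem says that \(\mathcal F^4_{\bar\alpha}\) has a rational first integral for every \(\bar\alpha\), so it is \(\ell\)-closed. The hypothesis \(\alpha\notin\QQ(\omega)\) plays no role in this step. It only serves to make the statement interesting: by Corollary~\ref{int:cond}, the complex foliation has no rational first integral.

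\emph{Third step: Zariski density.} This is the only step with real content, and I expect it to be the main obstacle. The ring \(R=\ZZ[\alpha]\) is an order in the number field \(K=\QQ(\alpha)\), so \(\operatorname{Spec}(R)\) is one-dimensional and irreducible. A proper closed subset is therefore contained in the union of \(V(0)\)-type finite sets and finitely many closed points, so an infinite set of closed points is dense. It thus suffices to find infinitely many maximal ideals of residue characteristic \(\ell\equiv2\pmod 3\). By Dirichlet's theorem there are infinitely many primes \(\ell\equiv 2\pmod 3\). Each such \(\ell\) has at least one maximal ideal of \(R\) above it, because \(R\) is integral over \(\ZZ\) and lying-over applies. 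Distinct \(\ell\) give distinct points, and we discard finitely many of them to have \(\ell>3\) and to stay within the localized \(R\). This yields the required infinite, hence dense, set, completing the argument.
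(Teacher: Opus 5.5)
Your first two steps match the paper's proof: the form (\ref{eq:f4linsneto}) has coefficients in \(\ZZ[\alpha]\), its reduction becomes \(\mathcal F^4_{\bar\alpha}\) over \(\overline{\kappa(\mathfrak p)}\), and the first case of Theorem~\ref{thm:charp} gives \(\ell\)-closedness. Your density step also follows the paper's outline (Dirichlet's theorem plus \(\dim R=1\)).

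The gap is the assertion that \(R=\ZZ[\alpha]\) is integral over \(\ZZ\) (equivalently, an order in \(\QQ(\alpha)\)). Your lying-over argument rests on it, but it holds only when \(\alpha\) is an algebraic integer, which the statement does not assume. For example, \(\alpha=1/\sqrt5\) lies in \(\overline{\QQ}\setminus\QQ(\omega)\), and \(\alpha^2=1/5\in R\). So \(5\) is a unit in \(R\), and \(\operatorname{Spec}(R)\) has no closed point of residue characteristic \(5\), although \(5>3\) and \(5\equiv2\pmod 3\). Hence ``each such \(\ell\) has at least one maximal ideal of \(R\) above it'' is false in general. The localizations you mention, made to keep the reduction of degree four, do not address this.

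The repair only removes finitely many more primes. Let \(f\in\ZZ[x]\) be nonzero with \(f(\alpha)=0\) and leading coefficient \(a\). Then \(R[1/a]=\ZZ[1/a][\alpha]\) is integral over \(\ZZ[1/a]\). Lying-over gives, for every prime \(\ell\nmid a\), a maximal ideal of \(R[1/a]\) over \(\ell\), and its contraction to \(R\) is a closed point of residue characteristic \(\ell\).

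The paper avoids integrality altogether. By Chevalley's theorem, the image of the dominant morphism \(\operatorname{Spec}(R)\to\operatorname{Spec}(\ZZ)\) is constructible, hence contains \(\operatorname{Spec}(\ZZ[1/N])\) for some \(N\).

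Your final density claim is correct, but its justification should change. \(R\) is a finitely generated \(\ZZ\)-algebra whose fraction field is a number field, hence a one-dimensional Noetherian domain, and its proper closed subsets are finite sets of closed points. That is the reason infinite sets of closed points are dense, not that \(R\) is an order.
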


\begin{proof}
Since \(\alpha\notin \QQ(\omega)\),  by Corollary~\ref{int:cond}, the complex foliation \(\mathcal F^4_\alpha\) is not algebraically integrable. Let $R=\ZZ[\alpha]$, and let \(\mathfrak p\in\operatorname{Spec}(R)\) be a closed point. After base change to \(\overline{\kappa(\mathfrak p)}\), the reduction of \(\mathcal F^4_\alpha\) is a member of the degree-four family in
characteristic \(\ell\). Hence, by Theorem~\ref{thm:charp} and the hypothesis on \(\ell\), its reduction $\mathcal F^4_{\alpha, \mathfrak p }$ is \(\ell\)-closed.

It remains to prove the existence of infinitely many such closed points. Since \(R=\ZZ[\alpha]\) is a finitely generated integral
\(\ZZ\)-algebra of characteristic zero, the structural morphism \(\pi:\operatorname{Spec}(R)\to\operatorname{Spec}(\ZZ)\),  \(\mathfrak p \mapsto \mathfrak p \cap \ZZ\), is dominant.  By Chevalley's theorem \cite[Thm.~6]{matsumura}, the set
$\pi(\operatorname{Spec}(R))$ is constructible and contains a non-empty Zariski open subset of $\operatorname{Spec}(\ZZ)$. Therefore, there exists an integer \(N>0\) such that $\operatorname{Spec}(\ZZ[1/N])
    \subseteq
    \pi(\operatorname{Spec}(R))$. Consequently, for every prime \(\ell\nmid N\), the fiber $\pi^{-1}(\ell) = \operatorname{Spec}(R\otimes\FF_\ell)$ is nonempty. Since this fiber is of finite type over \(\FF_\ell\), it
has a closed point: this gives a closed point
\(\mathfrak p\in\operatorname{Spec}(R)\) with residue characteristic~\(\ell\). 

Finally, Dirichlet's theorem (see \cite[Ch.~VII]{MR1193029}) yields infinitely many primes \(\ell\equiv2\pmod3\). After discarding the finitely many primes dividing \(N\) and those smaller than \(5\), each such prime occurs as the residue characteristic of a closed point \(\mathfrak p\) for which \(\mathcal F^4_{\alpha,\mathfrak p}\) is \(\ell\)-closed. Since \(\alpha\) is algebraic over \(\QQ\), the ring \(R=\ZZ[\alpha]\) has Krull dimension one; hence, any infinite set of closed points is Zariski-dense in \(\operatorname{Spec}(R)\).
\end{proof}

The previous theorem is related to the following conjecture, due to Ekedahl, Shepherd-Barron, and Taylor~\cite[Conjecture~F]{EST}:

\begin{conjecture}\label{conj:ESBT}
Let $\mathcal F$ be a holomorphic foliation on a complex projective
manifold $X$ with an integral model $(\mathcal X,\mathcal F)$ defined
over a finitely generated $\ZZ$-algebra $R$. The foliation $\mathcal F$ is algebraically integrable if  and only if $\mathcal F_{\mathfrak p}$ is $p$-closed for every closed point  $\mathfrak p$ in a non-empty Zariski-open subset of $\operatorname{Spec}(R)$.
\end{conjecture}

\begin{remark} Theorem~\ref{thm:infinitely-many-p-closed-reductions} shows that the ``non-empty Zariski-open subset'' condition in Conjecture~\ref{conj:ESBT}
cannot be replaced by the weaker requirement that the reduction be
$p$-closed on a Zariski-dense subset of closed points of $\operatorname{Spec}(R)$. Indeed, if \(\alpha\in \overline{\QQ}\setminus \QQ(\omega)\), then $\mathcal F^4_\alpha$ is not algebraically integrable in characteristic zero, whereas its reductions are $p$-closed on a
Zariskid-ense set of closed points, namely at suitable reductions of residue characteristic $p\equiv2\pmod 3$. Thus, $p$-closedness on a Zariski-dense set of reductions is strictly weaker than the arithmetic condition appearing in Conjecture~\ref{conj:ESBT}. Examples of regular foliations on surfaces whose reductions modulo $p$ are $p$-closed for infinitely many primes and non-$p$-closed for infinitely many primes were already constructed in~\cite{EST}. Our examples are of a substantially different nature since, being foliations on \(\PP^2_{\CC}\), they have singularities.
\end{remark}
 
\begin{remark}
The phenomenon described in Theorem~\ref{thm:infinitely-many-p-closed-reductions} also occurs for elementary	logarithmic foliations. For simplicity, let \( F_0,F_1,F_2\in \ZZ[x,y,z] \) be generic homogeneous polynomials of the same degree, and let \(\alpha\in\CC\setminus\QQ\). Consider the logarithmic foliation \(\mathcal{G}_{\alpha}\) on \(\PP^2_{\CC}\) defined by
\[
	\omega_\alpha
	=
	F_1F_2\,\dd F_0
	-\alpha F_0F_2\,\dd F_1
	+(\alpha-1)F_0F_1\,\dd F_2.
\]
Let \(D_j=\{F_j=0\}\), and choose an intersection point \(q\in D_0\cap D_1\setminus D_2.\) Taking \(x=F_0\) and \(y=F_1\) as local coordinates at \(q\), and writing \(u=F_2\), which is a unit at \(q\), a holomorphic \(1\)-form defining the foliation locally is
	\[
	\Omega = yu\,\dd x-\alpha xu\, \dd y+(\alpha-1)xy\,\dd u.
	\]
Its linear part at \(q\) is, up to multiplication by a nonzero scalar, \(y\,\dd x-\alpha x\,\dd y.\) Thus, the quotient of the eigenvalues of the singularity at \(q\) is \(\alpha\). A nondegenerate singularity admitting a nonconstant meromorphic first integral must be resonant, and in particular, the quotient of its eigenvalues must be rational. Since \(\alpha\notin\QQ\), the foliation \(\mathcal{G}_{\alpha}\) is not algebraically integrable.
	
Now take \(\alpha=\ii =\sqrt{-1}\). For every prime \(p\equiv 1 \pmod 4\) there exists \(a\in\FF_p\) such that \(a^2=-1\). The reduction modulo \( p\) of \(\omega_\ii\) is
	\[
	\omega_{\ii,p}
	=
	\overline{F_1}\overline{F_2}\,\dd\overline{F_0}
	-a\overline{F_0}\overline{F_2}\,\dd\overline{F_1}
	+(a-1)\overline{F_0}\overline{F_1}\,\dd\overline{F_2}.
	\]
Choosing an integer representative of \(a\), we obtain
	\[ H
	=\frac{\overline{F_0} \cdot \overline{F_2}^{a-1}}{\overline{F_1}^{a}}
	\]
as a rational first integral for \(\mathcal{G}_{\ii,\mathfrak p}\). Hence, \(\mathcal{G}_{\ii,\mathfrak p}\) admits a rational first integral and is therefore \(p\)-closed. Since, by Dirichlet's theorem, there are infinitely many primes \(p\) such that \(p\equiv 1\pmod 4\), the complex foliation \(\mathcal{G}_{\ii}\), although not algebraically integrable, has a \(p\)-closed reduction for infinitely many primes. Proposition \ref{thm:infinitely-many-p-closed-reductions} provides a non-logarithmic instance of the same arithmetic phenomenon, in which \(p\)-closedness is instead governed by the supersingularness  of the underlying elliptic curve.
\end{remark}

\providecommand{\bysame}{\leavevmode\hbox to3em{\hrulefill}\thinspace}
\providecommand{\noopsort}[1]{}
\providecommand{\mr}[1]{\href{http://www.ams.org/mathscinet-getitem?mr=#1}{MR~#1}}
\providecommand{\zbl}[1]{\href{http://www.zentralblatt-math.org/zmath/en/search/?q=an:#1}{Zbl~#1}}
\providecommand{\jfm}[1]{\href{http://www.emis.de/cgi-bin/JFM-item?#1}{JFM~#1}}
\providecommand{\arxiv}[1]{\href{http://www.arxiv.org/abs/#1}{arXiv~#1}}
\providecommand{\doi}[1]{\url{https://doi.org/#1}}
\providecommand{\MR}{\relax\ifhmode\unskip\space\fi MR }
% \MRhref is called by the amsart/book/proc definition of \MR.
\providecommand{\MRhref}[2]{%
	\href{http://www.ams.org/mathscinet-getitem?mr=#1}{#2}
}
\providecommand{\href}[2]{#2}


\begin{thebibliography}{BHPV04}


    
	\bibitem[Ahl78]{ahlfors}
	\bgroup\scshape{}L.~V. Ahlfors\egroup{}, \emph{Complex analysis}, third ed.,
	\emph{International Series in Pure and Applied Mathematics}, McGraw-Hill Book
	Co., New York, 1978.
	
	\bibitem[BHPV04]{BPHV}
	\bgroup\scshape{}W.~P. Barth\egroup{}, \bgroup\scshape{}K.~Hulek\egroup{},
	\bgroup\scshape{}C.~A.~M. Peters\egroup{}, and \bgroup\scshape{}A.~Van~de
	Ven\egroup{}, \emph{Compact complex surfaces}, second ed., \emph{Ergebnisse
		der Mathematik und ihrer Grenzgebiete. 3. Folge. A Series of Modern Surveys
		in Mathematics} \textbf{4}, Springer-Verlag, Berlin, 2004. 
	\doi{10.1007/978-3-642-57739-0}.
	
	\bibitem[Bia91]{bianchi-zahlen}
	\bgroup\scshape{}L.~Bianchi\egroup{}, Geometrische {D}arstellung der {G}ruppen
	linearer {S}ubstitutionen mit ganzen complexen {C}oefficienten nebst
	{A}nwendungen auf die {Z}ahlentheorie,  \emph{Math. Ann.} \textbf{38} (1891),
	313--333. \doi{10.1007/BF01199425}.
	
	\bibitem[Bia92]{bianchi-gruppi}
	\bgroup\scshape{}L.~Bianchi\egroup{}, Sui gruppi di sostituzioni lineari con
	coefficienti appartenenti a corpi quadratici immaginarî,  \emph{Math. Ann.}
	\textbf{40} (1892), 332--412. \doi{10.1007/BF01443558}.
	
	\bibitem[BK86]{PAC}
	\bgroup\scshape{}E.~Brieskorn\egroup{} and
	\bgroup\scshape{}H.~Kn\"orrer\egroup{}, \emph{Plane algebraic curves},
	Birkh\"auser Verlag, Basel, 1986.  \doi{10.1007/978-3-0348-5097-1}.
	
	\bibitem[Bru99]{brunella-entire}
	\bgroup\scshape{}M.~Brunella\egroup{}, Courbes enti\`eres et feuilletages
	holomorphes,  \emph{Enseign. Math. (2)} \textbf{45} no.~1-2 (1999), 195--216.  \doi{https://doi.org/10.5169/seals-64446}.
	
	\bibitem[Bru15]{brunella-birational}
	\bgroup\scshape{}M.~Brunella\egroup{}, \emph{Birational geometry of
		foliations}, \emph{IMPA Monographs} \textbf{1}, Springer, Cham, 2015.  \doi{10.1007/978-3-319-14310-1}.
	
	\bibitem[CF03]{cantat-favre}
	\bgroup\scshape{}S.~Cantat\egroup{} and \bgroup\scshape{}C.~Favre\egroup{},
	Sym\'etries birationnelles des surfaces feuillet\'ees,  \emph{J. Reine Angew.
		Math.} \textbf{561} (2003), 199--235.  
	\doi{10.1515/crll.2003.066}.
	
	\bibitem[ESBT99]{EST}
	\bgroup\scshape{}T.~Ekedahl\egroup{}, \bgroup\scshape{}N.~I.
	Shepherd-Barron\egroup{}, and \bgroup\scshape{}R.~L. Taylor\egroup{}, \emph{A
		conjecture on the existence of compact leaves of algebraic foliations},
	preprint, 1999.
	
	\bibitem[Fin89]{fine}
	\bgroup\scshape{}B.~Fine\egroup{}, \emph{Algebraic theory of the {B}ianchi
		groups}, \emph{Monographs and Textbooks in Pure and Applied Mathematics}
	\textbf{129}, Marcel Dekker, Inc., New York, 1989.  
	
	\bibitem[Fuj88]{fujiki}
	\bgroup\scshape{}A.~Fujiki\egroup{}, Finite automorphism groups of complex tori
	of dimension two,  \emph{Publ. Res. Inst. Math. Sci.} \textbf{24} no.~1
	(1988), 1--97.   \doi{10.2977/prims/1195175326}.
	
	\bibitem[Gui02]{guillot-exemplesLN}
	\bgroup\scshape{}A.~Guillot\egroup{}, Sur les exemples de {L}ins {N}eto de
	feuilletages alg\'{e}briques,  \emph{C. R. Math. Acad. Sci. Paris}
	\textbf{334} no.~9 (2002), 747--750.  
	\doi{10.1016/S1631-073X(02)02343-9}.
	
	\bibitem[Gui06]{guillot-quadratic}
	\bgroup\scshape{}A.~Guillot\egroup{}, Semicompleteness of homogeneous quadratic
	vector fields,  \emph{Ann. Inst. Fourier (Grenoble)} \textbf{56} no.~5
	(2006), 1583--1615.    \doi{10.5802/aif.2221}.
	
 
	
	\bibitem[Hoc55]{Hochschild1955}
	\bgroup\scshape{}G.~Hochschild\egroup{}, Simple algebras with purely
	inseparable splitting fields of exponent~{$1$},  \emph{Trans. Amer. Math.
		Soc.} \textbf{79} (1955), 477--489.   \doi{10.2307/1993043}.


	\bibitem[JIW99]{Johnson-Ivic}
	\bgroup\scshape{}N.~W. Johnson\egroup{} and
	\bgroup\scshape{}A.~Ivi\'c~Weiss\egroup{}, Quadratic integers and {C}oxeter
	groups,  \emph{Canad. J. Math.} \textbf{51} no.~6 (1999), 1307--1336.  \doi{10.4153/CJM-1999-060-6}.
	
	\bibitem[Kna92]{MR1193029}
	\bgroup\scshape{}A.~W. Knapp\egroup{}, \emph{Elliptic curves},
	\emph{Mathematical Notes} \textbf{40}, Princeton University Press, Princeton,
	NJ, 1992.  
	
	\bibitem[LLT23]{linglu}
	\bgroup\scshape{}H.~Ling\egroup{}, \bgroup\scshape{}J.~Lu\egroup{}, and
	\bgroup\scshape{}S.-L. Tan\egroup{}, The birational invariants of {L}ins
	{N}eto's foliations,  \emph{Proc. Amer. Math. Soc.} \textbf{151} no.~12
	(2023), 5223--5238.    \doi{10.1090/proc/16401}.
	
 	
	\bibitem[LN02]{linsneto}
	\bgroup\scshape{}A.~Lins~Neto\egroup{}, Some examples for the {P}oincar\'{e}
	and {P}ainlev\'{e} problems,  \emph{Ann. Sci. \'{E}cole Norm. Sup. (4)}
	\textbf{35} no.~2 (2002), 231--266. 
	\doi{10.1016/S0012-9593(02)01089-3}.
	
	\bibitem[LN04]{lins_neto-pencils}
	\bgroup\scshape{}A.~Lins~Neto\egroup{}, Curvature of pencils of foliations,
	\emph{Ast\'erisque} no.~296 (2004), 167--190, Analyse complexe, syst\`emes
	dynamiques, sommabilit\'e{} des s\'eries divergentes et th\'eories
	galoisiennes. I.   Available at
	\url{https://www.numdam.org/item/AST_2004__296__167_0/}.
	
	\bibitem[Liu02]{MR1917232}
	\bgroup\scshape{}Q.~Liu\egroup{}, \emph{Algebraic geometry and arithmetic
		curves}, \emph{Oxford Graduate Texts in Mathematics} \textbf{6}, Oxford
	University Press, Oxford, 2002.

\bibitem[Mat80]{matsumura}
\bgroup\scshape{}H.~Matsumura\egroup{}, \emph{Commutative algebra}, second ed.,
\emph{Mathematics Lecture Note Series} \textbf{56}, Benjamin/Cummings
Publishing Co., Inc., Reading, MA, 1980.

	
	\bibitem[McQ01]{mcquillan}
	\bgroup\scshape{}M.~McQuillan\egroup{}, \emph{Non-commutative {M}ori theory},
	preprint IHES/M/01/07, 2001.
	
	\bibitem[MP24]{MP-negative}
	\bgroup\scshape{}L.~G. Mendes\egroup{} and
	\bgroup\scshape{}L.~Puchuri\egroup{}, Negative curves and elliptic fibrations
	on a special rational surface,  (2024), preprint arXiv:2409.19235. Available
	at \url{https://arxiv.org/abs/2409.19235}.
	
	\bibitem[MP26]{mendes-puchuri-effective}
	\bgroup\scshape{}L.~G. Mendes\egroup{} and
	\bgroup\scshape{}L.~Puchuri\egroup{}, Effective integrability of {L}ins
	{N}eto's family of foliations,  \emph{Hokkaido Math. J.} \textbf{2} (2026),
	229--255. \doi{10.14492/hokmj/2024-887}.
	
	\bibitem[Men23]{mendson2022foliations}
	\bgroup\scshape{}W.~Mendson\egroup{}, Foliations on smooth algebraic surfaces
	in positive characteristic,  \emph{J. Pure Appl. Algebra} \textbf{227} no.~9
	(2023), Paper No. 107379.  \doi{10.1016/j.jpaa.2023.107379}.
	
	\bibitem[Mil06]{milnor-dynamics}
	\bgroup\scshape{}J.~Milnor\egroup{}, \emph{Dynamics in one complex variable},
	third ed., \emph{Annals of Mathematics Studies} \textbf{160}, Princeton
	University Press, Princeton, NJ, 2006.  
	
	\bibitem[MP97]{MR1468476}
	\bgroup\scshape{}Y.~Miyaoka\egroup{} and
	\bgroup\scshape{}T.~Peternell\egroup{}, \emph{Geometry of higher-dimensional
		algebraic varieties}, \emph{DMV Seminar} \textbf{26}, Birkh\"{a}user Verlag,
	Basel, 1997.  \doi{10.1007/978-3-0348-8893-6}.
	
	\bibitem[Mum08]{MumfordAbelian74}
	\bgroup\scshape{}D.~Mumford\egroup{}, \emph{Abelian varieties},  Tata
		Institute of Fundamental Research Studies in Mathematics~\textbf{5}, Tata
	Institute of Fundamental Research, Bombay, 2008,  Corrected
	reprint of the second (1974) edition.  
	
	\bibitem[Poi91]{poincare-palermo1}
	\bgroup\scshape{}H.~Poincar\'e\egroup{}, {Sur l'int\'egration alg\'ebrique des
		\'equations diff\'erentielles du premier ordre et du premier degr\'e},
	\emph{Rend. Circ. Mat. Palermo} \textbf{5} (1891), 161--191.
	
	\bibitem[Poi97]{poincare-palermo2}
	\bgroup\scshape{}H.~Poincar\'e\egroup{}, {Sur l'int\'egration alg\'ebrique des
		\'equations diff\'erentielles du premier ordre et du premier degr\'e},
	\emph{Rend. Circ. Mat. Palermo} \textbf{11} (1897), 193--239.
	
	\bibitem[PM13]{puchuri}
	\bgroup\scshape{}L.~Puchuri~Medina\egroup{}, Degree of the first integral of a
	pencil in {$\mathbb{P}^2$} defined by {L}ins {N}eto,  \emph{Publ. Mat.}
	\textbf{57} no.~1 (2013), 123--137. 
	\doi{10.5565/PUBLMAT\_57113\_05}.
	
	\bibitem[Sil09]{silverman}
	\bgroup\scshape{}J.~H. Silverman\egroup{}, \emph{The arithmetic of elliptic
		curves}, second ed., \emph{Graduate Texts in Mathematics} \textbf{106},
	Springer, Dordrecht, 2009.  \doi{10.1007/978-0-387-09494-6}.
	
	\bibitem[Swa71]{Swan}
	\bgroup\scshape{}R.~G. Swan\egroup{}, Generators and relations for certain
	special linear groups,  \emph{Advances in Math.} \textbf{6} (1971), 1--77.  \doi{10.1016/0001-8708(71)90027-2}.
	
	\bibitem[Wal04]{ctcwall}
	\bgroup\scshape{}C.~T.~C. Wall\egroup{}, \emph{Singular points of plane
		curves}, \emph{London Mathematical Society Student Texts} \textbf{63},
	Cambridge University Press, Cambridge, 2004.  
	\doi{10.1017/CBO9780511617560}.
	
\end{thebibliography}
\end{document}